\newtheorem{thm}{Theorem}
\newtheorem{prop}[thm]{Proposition}
\newtheorem{lem}[thm]{Lemma}
\newtheorem{cor}[thm]{Corollary}
\newtheorem{rem}[thm]{Remark}
\renewcommand{\epsilon}{\varepsilon}
\renewcommand{\phi}{\varphi}
\renewcommand{\deg}{\operatorname{deg}}
\newcommand{\BB}{\mathbb}
\newcommand{\separate}{\vskip5pt}
\newcommand{\re}{\operatorname{Re}}
\newcommand{\tr}{\operatorname{Tr}}
\newcommand{\HC}{\BB H_{\BB C}}
\newcommand{\degt}{\widetilde{\operatorname{deg}}}
\newcommand\textcyr[1]{{\fontencoding{OT2}\fontfamily{wncyr}\selectfont #1}}
\newcommand{\Zh}{\textit{\textcyr{Zh}}}
\begin{document}

\title{\bf Magic Identities for the Conformal Four-Point Integrals;
the Minkowski Metric Case}
\author{Matvei Libine}
\maketitle

\begin{abstract}
The original ``magic identities'' are due to J.~M.~Drummond, J.~Henn,
V.~A.~Smirnov and E.~Sokatchev; they assert that all $n$-loop box integrals
for four scalar massless particles are equal to each other \cite{DHSS}.
The authors give a proof of the magic identities for the Euclidean metric case
only and claim that the result is also true in the Minkowski metric.
However, the Minkowski case is much more subtle and requires specification
of the relative positions of cycles of integration to make these identities
correct.
In this article we prove the magic identities in the Minkowski metric case
and, in particular, specify the cycles of integration.

Our proof of magic identities relies on previous results from \cite{L1,L2},
where we give a mathematical interpretation of the $n$-loop box integrals
in the context of representations of a Lie group $U(2,2)$ and quaternionic
analysis. The main result of \cite{L1,L2} is a (weaker) operator version
of the ``magic identities''.

No prior knowledge of physics or Feynman diagrams is assumed from the reader.
We provide a summary of all relevant results from quaternionic analysis to
make the article self-contained.
\end{abstract}

\noindent
{\bf MSC:} 22E70, 81T18, 30G35, 53A30.

\noindent
{\bf Keywords:} Feynman diagrams, conformal four-point integrals,
``magic identities'', representations of $U(2,2)$, conformal geometry,
quaternionic analysis.

\section{Introduction}

This paper deals with planar conformal four-point integrals described by the
scalar box diagrams. They play an important role in physics, particularly
Yang-Mills conformal field theory (for more details see \cite{DHSS}
and references therein).
These diagrams have been thoroughly studied by physicists.
For example, the integral described by the one-loop Feynman diagram
is known to express the hyperbolic volume of an ideal tetrahedron
and is given by the dilogarithm function \cite{DD, W};
there are explicit expressions for the integrals described by the
ladder diagrams in terms of polylogarithms \cite{UD}.
Perhaps the most important property of the planar conformal four-point
integrals are the ``magic identities'' due to J.~M.~Drummond, J.~Henn,
V.~A.~Smirnov and E.~Sokatchev \cite{DHSS}.
These identities assert that all $n$-loop box integrals
for four scalar massless particles are equal to each other.
We will discuss these magic identities in Subsection \ref{magic-id-subsect}.

The original paper \cite{DHSS} gives a proof of the magic identities for
the Euclidean metric case only and claims that the result is also true in
the Minkowski metric.
In the Euclidean case, all variables belong to $\BB R^4$ or $\BB H$,
and there are no convergence issues whatsoever.
On the other hand, the Minkowski case -- which is the case we consider --
is much more subtle.
In order to deal with convergence issues, we must
consider the so-called ``off-shell Minkowski integrals'' or
perturb the cycles of integration inside $\BB H \otimes \BB C$.
Then the relative position of the cycles becomes very important.
In fact, choosing the ``wrong'' cycles typically results in the integral being
zero. The ``right'' choice of cycles was specified in \cite{L2}, we recall it
in Subsection \ref{BoxDiag_subsection}.

Our proof relies on previous results from \cite{L1, L2}, where we find a
representation-theoretic meaning of all planar conformal four-point integrals.
To each such integral, we associate an operator $L^{(n)}$ on
${\cal H}^+ \otimes {\cal H}^+$, where ${\cal H}^+$ denotes the space of
harmonic functions on the algebra of quaternions $\BB H$.
By Proposition 12 in \cite{L2} (restated here as Proposition \ref{Prop12}),
this operator $L^{(n)}$ is $\mathfrak{u}(2,2)$-equivariant and maps
${\cal H}^+ \otimes {\cal H}^+$ into itself.
We have a decomposition of $\mathfrak{u}(2,2)$-representations into distinct
irreducible components:
\begin{equation*}  
(\pi^0_l, {\cal H}^+) \otimes (\pi^0_r, {\cal H}^+) \simeq
\bigoplus_{k=1}^{\infty} \bigl( \rho_k, (\Zh^+_k)_{irr} \bigr).
\end{equation*}
Then, by Schur's Lemma, $L^{(n)}$ acts on each irreducible component
$\bigl( \rho_k, (\Zh^+_k)_{irr} \bigr)$ by multiplication by some scalar
$\mu^{(n)}_k$, and the main result of \cite{L2} is Theorem 14, which
describes these scalars.
Since these scalars depend only on the number of loops $n$ in the diagram
and not on the composition of the diagram,
we immediately obtain magic identities for the operators $L^{(n)}$:
Any two box diagrams with the same number of loops produce the same operator
$L^{(n)}$ on ${\cal H}^+ \otimes {\cal H}^+$ (Corollary 15 in \cite{L2}
which is restated here as Theorem \ref{operator-magic}).

We prove magic identities (Theorem \ref{magic}) by extending the operator
$L^{(n)}$ to a larger tensor product space $\Zh \otimes \Zh$,
this operator will be called $\bar L^{(n)}$.
The space $\Zh$ will be introduced in Subsection \ref{Zh-subsection};
it has a key property that the values of $\bar L^{(n)}$ on $\Zh \otimes \Zh$
uniquely determine the underlying $n$-loop four-point integral.
On the other hand, we will show that
$$
(\Zh \otimes \Zh)/ \ker \bar L^{(n)} \simeq {\cal H}^+ \otimes {\cal H}^+
$$
(Proposition \ref{harmonic-input+output}).
This allows us to reduce the original magic identities (Theorem \ref{magic})
to the already established operator version of magic identities for $L^{(n)}$
(Theorem 14 and Corollary 15 in \cite{L2}).

There are also other conformally invariant Feynman integrals,
such as Feynman integrals corresponding to non-planar graphs
(see, for example, \cite{DDEHPS} and references therein).
It appears that even the three- and four-loop non-planar integrals are rather
difficult to compute.
Methods developed in this paper could apply to these conformal integrals too,
and one could approach this type of Feynman integrals by describing associated 
$\mathfrak{u}(2,2)$-equivariant operators on an appropriate space of functions.

The paper is organized as follows.
In Section \ref{fd-section} we review the box diagrams and the corresponding
conformal four-point integrals, state the magic identities
(Theorem \ref{magic}) and specify the contours of integration.
In Section \ref{preliminaries} we establish our notations and state relevant
results from quaternionic analysis and \cite{L1, L2}.
In particular, we restate the operator version of magic identities
(Theorem \ref{operator-magic}).
In Section \ref{proof-section} we prove the original magic identities in the
Minkowski metric case (Theorem \ref{magic}).

\section{Conformal Four-Point Integrals and Magic Identities} \label{fd-section}

In this section we recall from \cite{L2} our description of
the planar conformal four-point integrals represented
by the $n$-loop box diagrams and the ``magic identities'' due to \cite{DHSS}
that assert that integrals represented by diagrams with the same number of
loops are, in fact, equal to each other.

\subsection{Some Notations}  \label{subsection_2.1}

We recall some notations from \cite{FL1}.
Let $\HC$ denote the space of complexified quaternions:
$\HC = \BB H \otimes \BB C$, it can be identified with the algebra of
$2 \times 2$ complex matrices:
$$
\HC = \BB H \otimes \BB C \simeq \biggl\{
Z = \begin{pmatrix} z_{11} & z_{12} \\ z_{21} & z_{22} \end{pmatrix}
; \: z_{ij} \in \BB C \biggr\}
= \biggl\{ Z= \begin{pmatrix} z^0-iz^3 & -iz^1-z^2 \\ -iz^1+z^2 & z^0+iz^3
\end{pmatrix} ; \: z^k \in \BB C \biggr\}.
$$
For $Z \in \HC$, we write
$$
N(Z) = \det \begin{pmatrix} z_{11} & z_{12} \\ z_{21} & z_{22} \end{pmatrix}
= z_{11}z_{22}-z_{12}z_{21} = (z^0)^2 + (z^1)^2 + (z^2)^2 + (z^3)^2
$$
and think of it as the norm of $Z$.
We realize $U(2)$ as
$$
U(2) = \{ Z \in \HC ;\: Z^*=Z^{-1} \},
$$
where $Z^*$ denotes the complex conjugate transpose of a complex matrix $Z$.
For $R>0$, we set
$$
U(2)_R = \{ RZ ;\: Z \in U(2) \} \quad \subset \HC
$$
and orient it as in \cite{FL1}, so that
$$
\int_{U(2)_R} \frac{dV}{N(Z)^2} = -2\pi^3 i,
$$
where $dV$ is a holomorphic 4-form
$$
dV = dz^0 \wedge dz^1 \wedge dz^2 \wedge dz^3
= \frac14 dz_{11} \wedge dz_{12} \wedge dz_{21} \wedge dz_{22}.
$$
The group $GL(2,\HC) \simeq GL(4,\BB C)$ acts on $\HC$ by fractional
linear (or conformal) transformations:
\begin{equation}  \label{conformal-action}
h: Z \mapsto (aZ+b)(cZ+d)^{-1} = (a'-Zc')^{-1}(-b'+Zd'),
\qquad Z \in \HC,
\end{equation}
where
$h = \bigl(\begin{smallmatrix} a & b \\ c & d \end{smallmatrix}\bigr)
\in GL(2,\HC)$ and 
$h^{-1} = \bigl(\begin{smallmatrix} a' & b' \\ c' & d' \end{smallmatrix}\bigr)$.

We often regard the group $U(2,2)$ as a subgroup of $GL(2,\HC)$,
as described in Subsection 3.5 of \cite{FL1}. That is
$$
U(2,2) = \Biggl\{ \begin{pmatrix} a & b \\ c & d \end{pmatrix}
\in GL(2,\HC) ;\: a,b,c,d \in \HC,\:
\begin{matrix} a^*a = 1+c^*c \\ d^*d = 1+b^*b \\ a^*b=c^*d \end{matrix}
\Biggr\}.
$$
The maximal compact subgroup of $U(2,2)$ is
\begin{equation*}  
U(2) \times U(2) = \biggl\{
\begin{pmatrix} a & 0 \\ 0 & d \end{pmatrix} \in GL(2, \HC);\:
a,d \in \HC, \: a^*a=d^*d=1 \biggr\}.
\end{equation*}
The action of the group $U(2,2)$ on $\HC$ by fractional linear transformations
(\ref{conformal-action}) preserves $U(2) \subset \HC$ and open domains
\begin{equation}  \label{D}
\BB D^+ = \{ Z \in \HC;\: ZZ^*<1 \}, \qquad
\BB D^- = \{ Z \in \HC;\: ZZ^*>1 \},
\end{equation}
where the inequalities $ZZ^*<1$ and $ZZ^*>1$ mean that the matrix $ZZ^*-1$
is negative and positive definite respectively.
The sets $\BB D^+$ and $\BB D^-$ both have $U(2)$ as the Shilov boundary.

Similarly, for each $R>0$, we can define a conjugate of $U(2,2)$
$$
U(2,2)_R = \begin{pmatrix} R & 0 \\ 0 & 1 \end{pmatrix} U(2,2)
\begin{pmatrix} R^{-1} & 0 \\ 0 & 1 \end{pmatrix} \quad \subset GL(2,\HC).
$$
Each group $U(2,2)_R$ is a real form of $GL(2,\HC)$, preserves $U(2)_R$
and open domains
\begin{equation}  \label{D_R}
\BB D^+_R = \{ Z \in \HC ;\: ZZ^*<R^2 \}, \qquad
\BB D^-_R = \{ Z \in \HC ;\: ZZ^*>R^2 \}.
\end{equation}
These sets $\BB D^+_R$ and $\BB D^-_R$ both have $U(2)_R$
as the Shilov boundary.

\subsection{Planar Conformal Four-Point Integrals}

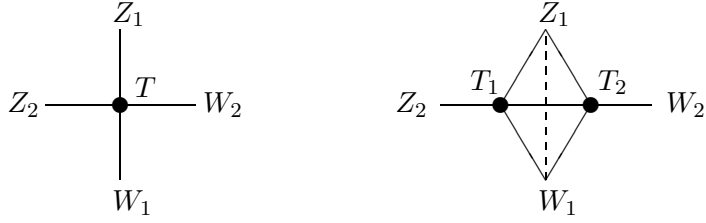
\begin{figure}
\begin{center}
\begin{subfigure}{0.3\textwidth}
\centering
\setlength{\unitlength}{1mm}
\begin{picture}(30,28)
\put(15,14){\circle*{2}}
\put(15,4){\line(0,1){20}}
\put(5,14){\line(1,0){20}}
\put(0,13){$Z_2$}
\put(14,25){$Z_1$}
\put(26,13){$W_2$}
\put(14,0){$W_1$}
\put(17,15){$T$}
\end{picture}
\end{subfigure}
\qquad
\begin{subfigure}{0.3\textwidth}
\centering
\setlength{\unitlength}{1mm}
\begin{picture}(40,28)
\put(14,14){\circle*{2}}
\put(26,14){\circle*{2}}
\put(6,14){\line(1,0){28}}
\put(14,14){\line(3,5){6}}
\put(14,14){\line(3,-5){6}}
\put(26,14){\line(-3,5){6}}
\put(26,14){\line(-3,-5){6}}
\multiput(20,4)(0,2){10}{\line(0,1){1}}
\put(0,13){$Z_2$}
\put(19,25){$Z_1$}
\put(36,13){$W_2$}
\put(19,0){$W_1$}
\put(27,16){$T_2$}
\put(10,16){$T_1$}
\end{picture}
\end{subfigure}
\end{center}
\caption{One-loop (left) and two-loop (right) box (or ladder) diagrams.}
\label{12ladder}
\end{figure}

In this subsection we explain how to construct the box diagrams and
the corresponding (planar) conformal four-point integrals.
As in \cite{DHSS}, we use the coordinate space variable notation
(as opposed to the momentum notation).
With this choice of variables, the one- and two-loop box (or ladder) diagrams
are represented as in Figure \ref{12ladder}.
The simplest conformal four-point integral is the one-loop box integral
\begin{equation}  \label{l^{(1)}}
l^{(1)}(Z_1,Z_2;W_1,W_2) =
\frac i{2\pi^3} \int_{T \in U(2)_r}
\frac{dV}{N(Z_1-T) \cdot N(Z_2-T) \cdot N(W_1-T) \cdot N(W_2-T)}.
\end{equation}
Here, $r>0$, $Z_1,Z_2 \in \BB D^-_r$ and $W_1,W_2 \in \BB D^+_r$.
Then we have the two-loop box integral
\begin{multline*}
-4\pi^6 \cdot l^{(2)}(Z_1,Z_2;W_1,W_2) \\
= \iint_{\genfrac{}{}{0pt}{}{T_1 \in U(2)_{r_1}}{T_2 \in U(2)_{r_2}}}
\frac{|Z_1-W_1|^2 \cdot |T_1-T_2|^{-2} \, dV_{T_1} \, dV_{T_2}}
{|Z_1-T_1|^2 \cdot |Z_2-T_1|^2 \cdot
|W_1-T_1|^2 \cdot |Z_1-T_2|^2 \cdot |W_1-T_2|^2 \cdot |W_2-T_2|^2},
\end{multline*}
where we write $|Z-W|^2$ for $N(Z-W)$ in order to fit the formula on page.
Here, $r_1>r_2>0$, $Z_1, Z_2 \in \BB D^-_{r_1}$,
$W_1, W_2 \in \BB D^+_{r_2}$.
The factor $|Z_1-W_1|^2=N(Z_1-W_1)$ in the numerator is not involved in
integration and gives $l^{(2)}$ desired conformal properties
(Lemma \ref{conformal}).

In general, one obtains the integral from the box diagram by building
a rational function by writing a factor
$$
\begin{cases}
N(Y_i-Y_j)^{-1} &
\text{if there is a solid edge joining variables $Y_i$ and $Y_j$};  \\
N(Y_i-Y_j) & \text{if there is a dashed edge joining variables $Y_i$ and $Y_j$},
\end{cases}
$$
and then integrating over the solid vertices.
The issue of contours of integration (and, in particular, their relative
position) will be addressed at the end of the next subsection.

\subsection{Box Diagrams}  \label{BoxDiag_subsection}

The box diagrams are obtained by starting with the one-loop box diagram
(Figure \ref{12ladder}) and attaching the so-called ``slingshots'',
as explained in \cite{DHSS}.
Figures \ref{slingshot+one-loop1} and \ref{slingshot+one-loop2}
show the two possible results of attaching a slingshot to the one-loop diagram;
these are called the two-loop box diagrams.
Then Figures \ref{slingshot+two-loop1} and \ref{slingshot+two-loop2}
show two different results of attaching a slingshot to the two-loop box
diagrams; these are called the three-loop box diagrams.
In general, if one has an $(n-1)$-loop box diagram $d^{(n-1)}$ -- that is a
box diagram  obtained by attaching $n-2$ slingshots to the one-loop box diagram
-- there are four ways of attaching a slingshot to form an $n$-loop box diagram
$d^{(n)}$: the hollow vertex of the slingshot can be attached to any of the
vertices labeled $Z_1$, $Z_2$, $W_1$ or $W_2$.
For example, Figure \ref{slingshot+gendiagram}
illustrates a slingshot with the hollow vertex being attached to
the vertex labeled $Z_2$, then the ends of the slingshot with the ``string''
are attached to the adjacent vertices $Z_1$ and $W_1$, the hollow vertex of
the slingshot becomes solid and gets relabeled $T_n$, finally,
the vertex at the tip of the ``handle'' of the slingshot is labeled $Z_2$.
The other three cases are similar.
While there are four ways to attach a slingshot to $d^{(n-1)}$,
some of the resulting diagrams may be the same, since we treat
all slingshots as identical.
(The variables $T_1,\dots,T_n$ get integrated out, so we treat the
diagrams obtained by permuting these variables as the same.)
Thus there are only two two-loop box diagrams, and they differ only by
rearranging labels $Z_1$, $Z_2$, $W_1$, $W_2$
(Figures \ref{slingshot+one-loop1} and \ref{slingshot+one-loop2}).
Figure \ref{ladder-var-label} shows a particular example of an $n$-loop
box diagram called the $n$-loop ladder diagram.
The reason for the ``box'', ``ladder'' and ``loop'' terminology becomes
apparent when one switches to the momentum variables, see Figure \ref{ladders},
and more figures are given in \cite{DHSS}.

\begin{figure}
\begin{center}
\begin{subfigure}{0.15\textwidth}
\centering
\includegraphics[scale=1]{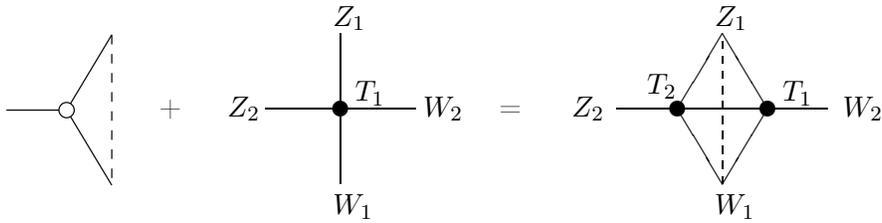}
\end{subfigure}
+
\begin{subfigure}{0.25\textwidth}
\centering
\setlength{\unitlength}{1mm}
\begin{picture}(30,28)
\put(15,14){\circle*{2}}
\put(15,4){\line(0,1){20}}
\put(5,14){\line(1,0){20}}
\put(0,13){$Z_2$}
\put(14,25){$Z_1$}
\put(26,13){$W_2$}
\put(14,0){$W_1$}
\put(17,15){$T_1$}
\end{picture}
\end{subfigure}
=
\begin{subfigure}{0.32\textwidth}
\centering
\setlength{\unitlength}{1mm}
\begin{picture}(40,28)
\put(14,14){\circle*{2}}
\put(26,14){\circle*{2}}
\put(6,14){\line(1,0){28}}
\put(14,14){\line(3,5){6}}
\put(14,14){\line(3,-5){6}}
\put(26,14){\line(-3,5){6}}
\put(26,14){\line(-3,-5){6}}
\multiput(20,4)(0,2){10}{\line(0,1){1}}
\put(0,13){$Z_2$}
\put(19,25){$Z_1$}
\put(36,13){$W_2$}
\put(19,0){$W_1$}
\put(28,15){$T_1$}
\put(10,16){$T_2$}
\end{picture}
\end{subfigure}
\end{center}
\caption{Attaching a slingshot to the one-loop box diagram.}
\label{slingshot+one-loop1}
\end{figure}

\begin{figure}
\begin{center}
\begin{subfigure}{0.15\textwidth}
\centering
\includegraphics[scale=1]{slingshot.eps}
\end{subfigure}
+
\begin{subfigure}{0.25\textwidth}
\centering
\setlength{\unitlength}{1mm}
\begin{picture}(30,28)
\put(15,14){\circle*{2}}
\put(15,4){\line(0,1){20}}
\put(5,14){\line(1,0){20}}
\put(0,13){$Z_1$}
\put(14,25){$W_2$}
\put(26,13){$W_1$}
\put(14,0){$Z_2$}
\put(17,15){$T_1$}
\end{picture}
\end{subfigure}
=
\begin{subfigure}{0.32\textwidth}
\centering
\setlength{\unitlength}{1mm}
\begin{picture}(40,28)
\put(14,14){\circle*{2}}
\put(26,14){\circle*{2}}
\put(6,14){\line(1,0){28}}
\put(14,14){\line(3,5){6}}
\put(14,14){\line(3,-5){6}}
\put(26,14){\line(-3,5){6}}
\put(26,14){\line(-3,-5){6}}
\multiput(20,4)(0,2){10}{\line(0,1){1}}
\put(0,13){$Z_1$}
\put(19,25){$W_2$}
\put(36,13){$W_1$}
\put(19,0){$Z_2$}
\put(28,15){$T_1$}
\put(10,16){$T_2$}
\end{picture}
\end{subfigure}
\end{center}
\caption{Another way of attaching a slingshot to the one-loop box diagram.}
\label{slingshot+one-loop2}
\end{figure}

\begin{figure}
\begin{center}
\begin{subfigure}{0.15\textwidth}
\centering
\includegraphics[scale=1]{slingshot.eps}
\end{subfigure}
+
\begin{subfigure}{0.3\textwidth}
\centering
\setlength{\unitlength}{1mm}
\begin{picture}(40,28)
\put(14,14){\circle*{2}}
\put(26,14){\circle*{2}}
\put(6,14){\line(1,0){28}}
\put(14,14){\line(3,5){6}}
\put(14,14){\line(3,-5){6}}
\put(26,14){\line(-3,5){6}}
\put(26,14){\line(-3,-5){6}}
\multiput(20,4)(0,2){10}{\line(0,1){1}}
\put(0,13){$Z_2$}
\put(19,25){$Z_1$}
\put(36,13){$W_2$}
\put(19,0){$W_1$}
\end{picture}
\end{subfigure}
=
\begin{subfigure}{0.32\textwidth}
\centering
\includegraphics[scale=1]{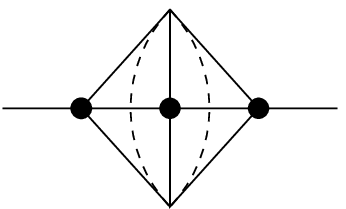}
\end{subfigure}
\hspace{-0.33\textwidth}
\begin{subfigure}{0.32\textwidth}
\centering
\setlength{\unitlength}{1mm}
\begin{picture}(46,28)
\put(0,13){$Z_2$}
\put(21,25){$Z_1$}
\put(41,13){$W_2$}
\put(21,0){$W_1$}
\end{picture}
\end{subfigure}
\end{center}
\caption{Attaching a slingshot to a two-loop box diagram.}
\label{slingshot+two-loop1}
\end{figure}

\begin{figure}
\begin{center}
\begin{subfigure}{0.15\textwidth}
\centering
\includegraphics[scale=1]{slingshot.eps}
\end{subfigure}
+
\begin{subfigure}{0.25\textwidth}
\centering
\setlength{\unitlength}{1mm}
\begin{picture}(31,30)
\put(16,9){\circle*{2}}
\put(16,21){\circle*{2}}
\put(16,1){\line(0,1){28}}
\put(6,15){\line(5,3){10}}
\put(6,15){\line(5,-3){10}}
\put(26,15){\line(-5,3){10}}
\put(26,15){\line(-5,-3){10}}
\multiput(6,15)(2,0){10}{\line(1,0){1}}
\put(0,14){$Z_2$}
\put(17,28){$Z_1$}
\put(28,14){$W_2$}
\put(17,0){$W_1$}
\end{picture}
\end{subfigure}
=
\begin{subfigure}{0.3\textwidth}
\centering
\setlength{\unitlength}{1mm}
\begin{picture}(39,33)
\put(23,10){\circle*{2}}
\put(23,22){\circle*{2}}
\put(13,16){\circle*{2}}
\put(5,16){\line(1,0){8}}
\put(23,10){\line(0,1){12}}
\put(13,16){\line(1,2){6}}
\put(23,22){\line(-2,3){4}}
\put(13,16){\line(1,-2){6}}
\put(23,10){\line(-2,-3){4}}
\put(13,16){\line(5,3){10}}
\put(13,16){\line(5,-3){10}}
\put(33,16){\line(-5,3){10}}
\put(33,16){\line(-5,-3){10}}
\multiput(13,16)(2,0){10}{\line(1,0){1}}
\multiput(19,4)(0,2){12}{\line(0,1){1}}
\put(0,15){$Z_2$}
\put(18,29){$Z_1$}
\put(35,15){$W_2$}
\put(18,0){$W_1$}
\end{picture}
\end{subfigure}
\end{center}
\caption{Another way of attaching a slingshot to a two-loop box diagram.}
\label{slingshot+two-loop2}
\end{figure}

\begin{figure}
\begin{center}
\begin{subfigure}{0.15\textwidth}
\centering
\includegraphics[scale=1]{slingshot.eps}
\end{subfigure}
+
\begin{subfigure}{0.35\textwidth}
\centering
\includegraphics[scale=1]{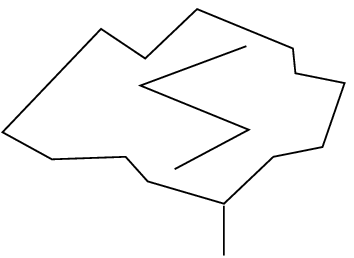}
\end{subfigure}
=
\begin{subfigure}{0.4\textwidth}
\centering
\includegraphics[scale=1]{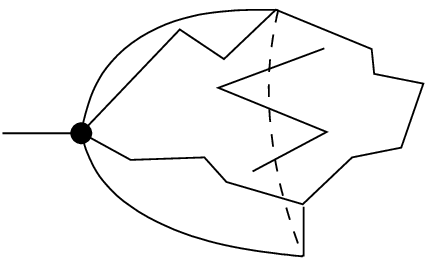}
\end{subfigure}
\end{center}
\vskip-33mm \hskip62mm $Z_1$ \hskip63.5mm $Z_1$

\vskip4mm \hskip77mm $W_2$ \hskip62.5mm $W_2$

\vskip-2mm \hskip106mm $T_n$

\vskip-2mm \hskip37mm $Z_2$ \hskip56mm $Z_2$

\vskip8mm \hskip65.5mm $W_1$ \hskip62.5mm $W_1$
\caption{Attaching a slingshot to a general box diagram.}
\label{slingshot+gendiagram}
\end{figure}

\begin{figure}
\begin{center}
\centerline{\includegraphics[scale=1.5]{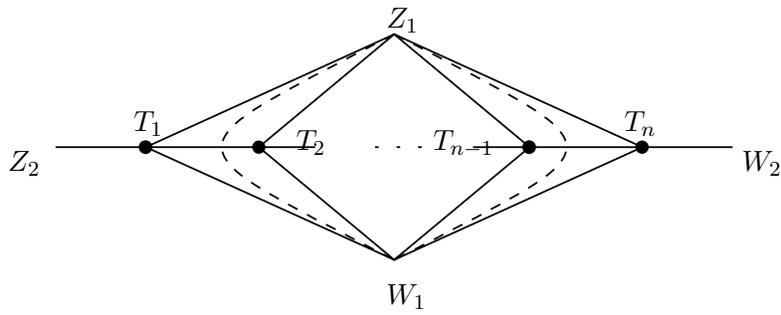}}
\vskip-36mm \hskip2mm $Z_1$

\vskip9mm $T_1$ \hskip60mm $T_n$

\vskip-4mm \:\:$T_2$ \hskip13mm $T_{n-1}$

\vskip-2mm $Z_2$ \hskip92mm $W_2$

\vskip13mm \hskip3mm $W_1$
\end{center}
\caption{$n$-loop ladder diagram (coordinate space variable).}
\label{ladder-var-label}
\end{figure}

\begin{figure}
\begin{center}
\begin{subfigure}{0.2\textwidth}
\centering
\includegraphics[scale=.8]{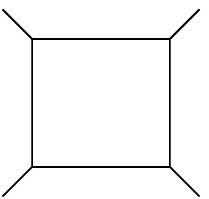}
\end{subfigure}
\begin{subfigure}{0.25\textwidth}
\centering
\includegraphics[scale=.8]{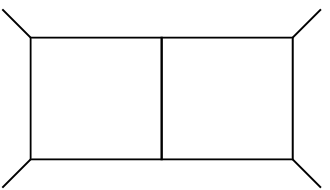}
\end{subfigure}
\begin{subfigure}{0.42\textwidth}
\centering
\includegraphics[scale=.8]{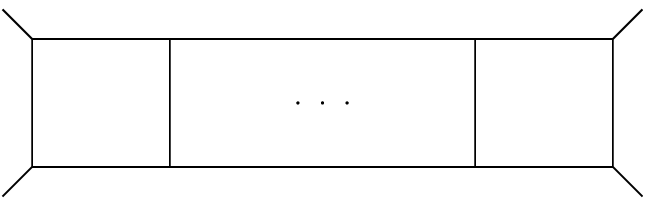}
\end{subfigure}
\end{center}
\caption{One-, two- and $n$-loop box or ladder diagrams in momentum variables.}
\label{ladders}
\end{figure}

In order to specify the cycles of integration, we introduce a partial ordering
on the variables in each $n$-loop box diagram $d^{(n)}$.
For the one-loop box diagram (Figure \ref{12ladder}) the relations are
$$
W_1,W_2 \prec T \prec Z_1,Z_2.
$$
Suppose that an $n$-loop box diagram $d^{(n)}$ is obtained
from an $(n-1)$-loop diagram $d^{(n-1)}$ by adding a slingshot.
Then $d^{(n)}$ will have one new relation for each solid edge of the slingshot,
plus those implied by the transitivity property.
Suppose, by induction, that the partial ordering for the variables in
$d^{(n-1)}$ are already specified.
We label the solid vertices in $d^{(n-1)}$ as $T_1, \dots, T_{n-1}$.
There are exactly four ways of attaching a slingshot to $d^{(n-1)}$
-- so that one of $Z_1$, $Z_2$, $W_1$ or $W_2$ becomes a solid vertex and gets
relabeled as $T_n$.
\begin{itemize}
\item
If $d^{(n)}$ is obtained from $d^{(n-1)}$ by adding the slingshot so
that $Z_1$ becomes a solid vertex, the relations in $d^{(n-1)}$ carry over
to $d^{(n)}$ with $Z_1$ replaced with $T_n$. Then we get new relations
$$
W_2 \prec T_n \prec Z_1, Z_2
$$
(plus those implied by the transitivity property).

\item
If $d^{(n)}$ is obtained from $d^{(n-1)}$ by adding the slingshot so
that $Z_2$ becomes a solid vertex, the relations in $d^{(n-1)}$ carry over
to $d^{(n)}$ with $Z_2$ replaced with $T_n$. Then we get new relations
$$
W_1 \prec T_n \prec Z_1, Z_2
$$
(plus those implied by the transitivity property).

\item
If $d^{(n)}$ is obtained from $d^{(n-1)}$ by adding the slingshot so
that $W_1$ becomes a solid vertex, the relations in $d^{(n-1)}$ carry over
to $d^{(n)}$ with $W_1$ replaced with $T_n$. Then we get new relations
$$
W_1, W_2 \prec T_n \prec Z_2
$$
(plus those implied by the transitivity property).

\item
If $d^{(n)}$ is obtained from $d^{(n-1)}$ by adding the slingshot so
that $W_2$ becomes a solid vertex, the relations in $d^{(n-1)}$ carry over
to $d^{(n)}$ with $W_2$ replaced with $T_n$. Then we get new relations
$$
W_1, W_2 \prec T_n \prec Z_1
$$
(plus those implied by the transitivity property).
\end{itemize}
This completely defines the partial ordering on the variables in $d^{(n)}$.
We choose real numbers $r_1,\dots,r_n>0$ such that $r_i < r_j$
whenever $T_i \prec T_j$ (it is easy to check that such a choice is
always possible). Then each $T_k$ gets integrated over $U(2)_{r_k}$.
Finally,
\begin{equation}  \label{r_max}
Z_i \in \BB D^-_{r_{\text{max},i}}, \quad \text{where }
r_{\text{max},i} = \max\{r_k ;\: T_k \prec Z_i \}, \qquad i=1,2;
\end{equation}
\begin{equation}  \label{r_min}
W_i \in \BB D^+_{r_{\text{min},i}}, \quad \text{where }
r_{\text{min},i} = \min\{r_k ;\: W_i \prec T_k \}, \qquad i=1,2.
\end{equation}

For future use, we make the following observations.
If $Y$ is one of the vertices of $d^{(n)}$ labeled $Z_1$, $Z_2$, $W_1$ or $W_2$,
then
\begin{equation}  \label{1edge}
\# \{\text{solid edges at $Y$} \} -
\# \{\text{dashed edges at $Y$} \} =1.
\end{equation}
Similarly, if $T$ is any solid vertex of the diagram $d^{(n)}$,
\begin{equation}  \label{4edges}
\# \{\text{solid edges at $T$} \} -
\# \{\text{dashed edges at $T$} \} =4,
\end{equation}
Both assertions can easily be shown by induction on the number of vertices in
$d^{(n)}$.

If desired, by Corollary 90 in \cite{FL1} the integrals over various $U(2)_r$'s
can be replaced by integrals over the Minkowski space $\BB M$ via an
appropriate ``Cayley transform''.
This means that these integrals are what the physicists call
``off-shell Minkowski integrals''.

\subsection{Magic Identities}  \label{magic-id-subsect}

In this subsection we state the so-called ``magic identities'' due to
J.~M.~Drummond, J.~Henn, V.~A.~Smirnov and E.~Sokatchev \cite{DHSS}.
Informally, they assert that all planar conformal four-point box integrals
obtained by adding the same number of slingshots to the one-loop integral
are equal. In other words, only the number of slingshots matters and
not how they are attached.

\begin{thm}  \label{magic}
Let $l^{(n)}(Z_1,Z_2;W_1,W_2)$ and $\tilde l^{(n)}(Z_1,Z_2;W_1,W_2)$ be two planar
conformal four-point integrals corresponding to any two $n$-loop box diagrams,
then
$$
l^{(n)}(Z_1,Z_2;W_1,W_2) = \tilde l^{(n)}(Z_1,Z_2;W_1,W_2).
$$
\end{thm}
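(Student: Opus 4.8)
The plan is to reduce the integral identity to the operator version of the magic identities (Theorem \ref{operator-magic}) by transporting the problem to the larger tensor product $\Zh \otimes \Zh$ and the extended operator $\bar L^{(n)}$.

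First I would recall why the operator version holds. The representation $(\pi^0_l, {\cal H}^+) \otimes (\pi^0_r, {\cal H}^+)$ decomposes into a direct sum of pairwise distinct irreducible $\mathfrak{u}(2,2)$-components $(\rho_k, (\Zh^+_k)_{irr})$, so by Schur's Lemma the $\mathfrak{u}(2,2)$-equivariant operator $L^{(n)}$ acts on the $k$-th component by a scalar $\mu^{(n)}_k$. Since Theorem 14 of \cite{L2} shows that $\mu^{(n)}_k$ depends only on $n$ and $k$ and not on the combinatorial shape of the diagram, any two $n$-loop box diagrams produce the same operator $L^{(n)}$ on ${\cal H}^+ \otimes {\cal H}^+$.

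Next I would pass to $\bar L^{(n)}$ on $\Zh \otimes \Zh$. Two facts carry the argument: (i) the values of $\bar L^{(n)}$ on $\Zh \otimes \Zh$ uniquely determine the underlying integral $l^{(n)}$, and (ii) $(\Zh \otimes \Zh)/\ker \bar L^{(n)} \simeq {\cal H}^+ \otimes {\cal H}^+$ (Proposition \ref{harmonic-input+output}), with $\bar L^{(n)}$ descending to $L^{(n)}$ on the quotient. Granting these, $\bar L^{(n)}$ factors as the canonical projection $\Zh \otimes \Zh \twoheadrightarrow {\cal H}^+ \otimes {\cal H}^+$ followed by $L^{(n)}$. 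If both this projection (equivalently, $\ker \bar L^{(n)}$) and the operator $L^{(n)}$ are independent of the chosen diagram, then so is $\bar L^{(n)}$, and hence by (i) so is $l^{(n)}$. Thus it remains to verify that the isomorphism in Proposition \ref{harmonic-input+output} is canonical, i.e.\ that the projection does not secretly depend on the diagram but only on $n$.

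I expect the main obstacle to be fact (i) together with the diagram-independence of the quotient in fact (ii). Concretely, this requires building $\Zh$ so that it contains enough distinguished vectors --- functions manufactured from the kernels $N(Y_i - Y_j)^{\pm 1}$ attached to the edges of the diagram --- on which $\bar L^{(n)}$ literally reproduces the integrand of $l^{(n)}$, and then checking that these vectors remain detectable after passing to the quotient, so that knowledge of $L^{(n)}$ alone pins down the full integral. Throughout, the relative positions of the cycles $U(2)_{r_k}$ dictated by the partial ordering of Subsection \ref{BoxDiag_subsection} must be respected, since in the Minkowski case an incompatible ordering of the radii $r_1, \dots, r_n$ collapses the integral to zero; ensuring that all intermediate manipulations preserve the admissible configuration of contours is the delicate point that distinguishes this case from the Euclidean one treated in \cite{DHSS}.
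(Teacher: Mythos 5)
Your proposal follows essentially the same route as the paper: reduce to the operator identities of Theorem \ref{operator-magic} by passing to $\bar L^{(n)}$ on $\Zh \otimes \Zh$, using that pairing against the complete orthogonal system $N(Z)^k\, t^l_{n\,\underline{m}}(Z)$ on the cycles $U(2)_R$ (plus analytic continuation) recovers $l^{(n)}$, and that Proposition \ref{harmonic-input+output} forces the only surviving test vectors to be the $k=-1$ ones, which correspond to ${\cal H}^+ \otimes {\cal H}^+$ under the canonical, diagram-independent isomorphism (\ref{harm-iso}). You correctly identify the two facts that carry the argument and where the real work lies, so the plan is sound.
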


In particular, we can parametrize the planar conformal four-point integrals by
the number of loops in the diagrams and choose a single representative from
the set of all $n$-loop diagrams, such as the $n$-loop ladder diagram
(Figures \ref{ladder-var-label} and \ref{ladders}).

The original paper \cite{DHSS} gives a proof for the Euclidean metric case and
claims that the result is also true in the Minkowski metric.
In the Euclidean case, the box integrals are produced by making all
variables belong to $\BB R^4 \simeq \BB H$ and replacing all cycles of
integration by $\BB H$.
Then $N(X-Y)$ is just the square of the Euclidean distance between $X$ and $Y$.
There are no convergence issues whatsoever.
On the other hand, the Minkowski case (which is the case we consider)
is much more subtle. In order to deal with convergence issues, we must
consider the so-called ``off-shell Minkowski integrals'' or make the cycles
of integration to be various $U(2)_r$'s as described above.
Then the relative position of cycles becomes very important;
it can be seen from the proof of Theorem 14 in \cite{L2} that choosing
the ``wrong'' cycles often results in integral being zero.

\section{Preliminaries}  \label{preliminaries}

In this section we establish further notations and state relevant results from
quaternionic analysis. We mostly follow our previous papers
\cite{FL1, FL2, L1, L2}.

\subsection{The Group $\HC^{\times}$ and Its Matrix Coefficients}  \label{matrix-coeff-subsection}

We denote by $\HC^{\times}$ the group of invertible complexified quaternions:
$$
\HC^{\times} = \{ Z \in \HC ;\: N(Z) \ne 0 \} \simeq GL(2,\BB C).
$$
Let $(\tau_{\frac12},\BB S)$ be the tautological 2-dimensional
representation of $\HC^{\times}$.
Then, for $l=0,\frac12,1,\frac32, \dots$, we denote by $(\tau_l,V_l)$ the
$2l$-th symmetric power product of $(\tau_{\frac12},\BB S)$.
(In particular, $(\tau_0,V_0)$ is the trivial one-dimensional representation.)
Thus, each $(\tau_l,V_l)$ is an irreducible representation of $\HC^{\times}$
of dimension $2l+1$.
A concrete realization of $(\tau_l,V_l)$ as well as an isomorphism
$V_l \simeq \BB C^{2l+1}$ suitable for our purposes are described in
Subsection 2.5 of \cite{FL1}.

Recall the matrix coefficient functions of $\tau_l(Z)$ described by
equation (27) of \cite{FL1} (cf. \cite{V}):
\begin{equation*}  
t^l_{n\,\underline{m}}(Z) = \frac 1{2\pi i}
\oint (sz_{11}+z_{21})^{l-m} (sz_{12}+z_{22})^{l+m} s^{-l+n} \,\frac{ds}s,
\qquad
\begin{matrix} l = 0, \frac12, 1, \frac32, \dots, \\ m,n \in \BB Z +l, \\
 -l \le m,n \le l, \end{matrix}
\end{equation*}
$Z=\bigl(\begin{smallmatrix} z_{11} & z_{12} \\
z_{21} & z_{22} \end{smallmatrix}\bigr) \in \HC$,
the integral is taken over a loop in $\BB C$ going once around the origin
in the counterclockwise direction.
We regard these functions as polynomials on $\HC$.
It is also useful to recall that
$$
t^l_{m\underline{n}}(Z^{-1}) 
\quad \text{is proportional to} \quad
N(Z)^{-2l} \cdot t^l_{-n\underline{-m}}(Z).
$$

A symmetric bilinear pairing on $\BB C$-valued functions on $\HC^{\times}$
is given by an expression
\begin{equation}  \label{pairing}
\langle f_1,f_2 \rangle =
\frac i{2\pi^3} \int_{Z \in U(2)_R} f_1(Z) \cdot f_2(Z) \,dV.
\end{equation}
We have the following orthogonality relations with respect to this pairing:
\begin{equation}  \label{orthogonality}
\bigl\langle N(Z)^{k'} \cdot t^{l'}_{n'\,\underline{m'}}(Z),
N(Z)^{-k-2} \cdot t^l_{m\underline{n}}(Z^{-1}) \bigr\rangle
= \frac1{2l+1} \delta_{kk'}\delta_{ll'} \delta_{mm'} \delta_{nn'},
\end{equation}
where the indices $k,l,m,n$ are
$l = 0, \frac12, 1, \frac32, \dots$, $m,n \in \BB Z +l$, $-l \le m,n \le l$,
$k \in \BB Z$ and similarly for $k',l',m',n'$.
The restrictions of the functions $N(Z)^k \cdot t^l_{n\,\underline{m}}(Z)$'s
to $U(2)$ form a complete orthogonal basis of functions on $U(2)$.
In other words, a continuous function on $U(2)$ that is orthogonal to all
$N(Z)^k \cdot t^l_{n\,\underline{m}}(Z)$'s with respect to the pairing
(\ref{pairing}) must be zero.
This follows immediately from the fact that the restrictions of the matrix
coefficients $t^l_{n\,\underline{m}}(Z)$'s to $SU(2) \subset U(2)$ form a
complete orthogonal basis of functions on $SU(2)$ (see \cite{V}).

We often use Proposition 25 from \cite{FL1}:

\begin{prop}
We have the following matrix coefficient expansion
\begin{equation}  \label{1/N-expansion}
\frac 1{N(Z-W)}= N(W)^{-1} \cdot \sum_{l,m,n}
t^l_{n\,\underline{m}}(Z) \cdot t^l_{m\,\underline{n}}(W^{-1}),
\end{equation}
which converges uniformly on compact subsets in the region
$\{ (Z,W) \in \HC \times \HC^{\times}; \: ZW^{-1} \in \BB D^+ \}$.
The sum is taken first over all $m,n = -l, -l+1, \dots, l$, then over
$l=0,\frac 12, 1, \frac 32,\dots$.
\end{prop}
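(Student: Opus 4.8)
The plan is to reduce the claimed expansion to a character identity for the representations $(\tau_l,V_l)$. First I would exploit the multiplicativity of the determinant: for $W \in \HC^\times$ we have $Z-W = -(1-ZW^{-1})W$, hence $N(Z-W) = N(W)\cdot N(1-ZW^{-1})$, so that
$$
\frac{1}{N(Z-W)} = N(W)^{-1}\cdot \frac{1}{N(1-X)}, \qquad X := ZW^{-1}.
$$
This already accounts for the prefactor $N(W)^{-1}$ on the right-hand side of (\ref{1/N-expansion}), and it remains to identify $\sum_{l,m,n} t^l_{n\,\underline{m}}(Z)\, t^l_{m\,\underline{n}}(W^{-1})$ with $N(1-X)^{-1}$.

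Next I would collapse the inner sums over $m$ and $n$ using the fact that the $t^l_{n\,\underline{m}}$ are the genuine matrix coefficients of $\tau_l$ in the basis of Subsection 2.5 of \cite{FL1}. The homomorphism property $\tau_l(ZW^{-1}) = \tau_l(Z)\,\tau_l(W^{-1})$ reads entrywise as $\sum_m t^l_{n\,\underline{m}}(Z)\, t^l_{m\,\underline{n}}(W^{-1}) = t^l_{n\,\underline{n}}(ZW^{-1})$, and summing the diagonal over $n$ produces the character:
$$
\sum_{m,n} t^l_{n\,\underline{m}}(Z)\, t^l_{m\,\underline{n}}(W^{-1}) = \tr \tau_l(X) =: \chi_l(X).
$$
Thus the proposition becomes equivalent to the scalar identity $N(1-X)^{-1} = \sum_{l} \chi_l(X)$, the outer sum running over $l=0,\tfrac12,1,\dots$.

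To prove this identity I would diagonalize $X$. Writing $\lambda,\mu$ for its eigenvalues, the Weyl character formula for $\tau_l = \operatorname{Sym}^{2l}\tau_{\frac12}$ gives $\chi_l(X) = \frac{\lambda^{2l+1}-\mu^{2l+1}}{\lambda-\mu} = \sum_{j=0}^{2l}\lambda^j\mu^{2l-j}$ (read as $(2l+1)\lambda^{2l}$ on the locus $\lambda=\mu$). As $2l$ ranges over all nonnegative integers, summing the resulting geometric series in $\lambda$ and $\mu$ yields
$$
\sum_l \chi_l(X) = \frac{1}{\lambda-\mu}\Bigl(\frac{\lambda}{1-\lambda} - \frac{\mu}{1-\mu}\Bigr) = \frac{1}{(1-\lambda)(1-\mu)} = \frac{1}{\det(1-X)} = \frac{1}{N(1-X)},
$$
which is exactly what is required.

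The main obstacle is the convergence, which also justifies the prescribed order of summation. The hypothesis $X = ZW^{-1}\in \BB D^+$ means $XX^*<1$, i.e. the operator norm (largest singular value) of $X$ is strictly less than $1$; since the spectral radius is bounded by the operator norm, both eigenvalues satisfy $|\lambda|,|\mu|<1$, so the geometric series above converge. For uniform convergence on compact subsets of $\{(Z,W): ZW^{-1}\in \BB D^+\}$ I would use the crude bound $|\chi_l(X)| \le (2l+1)\|X\|^{2l}$: on a compact set $\|X\|$ is bounded by some $\rho<1$, whence $\sum_l (2l+1)\rho^{2l} = (1-\rho)^{-2}<\infty$ dominates the series. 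Because each inner sum over $m,n$ is finite, this dominated convergence also legitimizes summing over $m,n$ first and then over $l$. The only point needing a little care is the coincident-eigenvalue locus $\lambda=\mu$, where the closed form for $\chi_l$ must be interpreted as a limit; but the estimate $|\chi_l(X)|\le(2l+1)\|X\|^{2l}$ holds uniformly there as well, so the argument goes through unchanged.
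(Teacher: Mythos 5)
Your argument is correct, but note that the paper itself offers no proof of this statement: it is quoted verbatim as Proposition 25 of \cite{FL1}, so there is nothing internal to compare against. Judged on its own, your route is the natural one and it works: the factorization $N(Z-W)=N(W)\cdot N(1-ZW^{-1})$ (multiplicativity of the determinant, plus $\det(-A)=\det A$ in dimension $2$) isolates the prefactor; the composition rule $t^l_{n\,\underline{m}}(ZW^{-1})=\sum_k t^l_{n\,\underline{k}}(Z)\,t^l_{k\,\underline{m}}(W^{-1})$ collapses the inner sums to the character $\chi_l(X)=\operatorname{Tr}\tau_l(X)$ with $X=ZW^{-1}$; and the identity $\sum_{k\ge 0}\operatorname{Tr}\operatorname{Sym}^k(X)=\det(1-X)^{-1}$ for $\|X\|<1$ finishes the computation. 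Your convergence argument is also sound: $X\in\BB D^+$ gives operator norm $\rho<1$ uniformly on compacta, and $|\chi_l(X)|\le(2l+1)\rho^{2l}$ (either from $\|\operatorname{Sym}^{2l}(X)\|\le\|X\|^{2l}$ or directly from the eigenvalue bound $|\lambda|,|\mu|\le\|X\|$) dominates the series; since the prescribed order of summation groups all $(m,n)$ for fixed $l$, the partial sums are exactly $\sum_{l\le L}\chi_l(X)$ and this estimate suffices. The only step you take on faith is that the contour-integral formulas $t^l_{n\,\underline{m}}$ really are matrix coefficients of $\tau_l$ in a fixed basis satisfying the composition rule on all of $\HC$ (not just $\HC^\times$); this is how \cite{FL1} sets them up, and it extends from the group to $\HC$ by polynomiality, but it is worth saying explicitly since the whole reduction to a character identity hinges on it.
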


\subsection{Harmonic Functions on $\HC^{\times}$}

As in Section 2 of \cite{FL2}, we consider the space $\widetilde{\cal H}$
consisting of $\BB C$-valued functions on $\HC$ (possibly with singularities)
that are holomorphic with respect to the complex variables
$z_{11},z_{12},z_{21},z_{22}$ and harmonic, i.e. annihilated by
$$
\square 
= 4\biggl( \frac{\partial^2}{\partial z_{11}\partial z_{22}}
- \frac{\partial^2}{\partial z_{12}\partial z_{21}} \biggr)
= \frac{\partial^2}{(\partial z^0)^2} + \frac{\partial^2}{(\partial z^1)^2}
+ \frac{\partial^2}{(\partial z^2)^2} + \frac{\partial^2}{(\partial z^3)^2}.
$$
Then the conformal group $GL(2,\HC)$ acts on $\widetilde{\cal H}$ by two
slightly different actions:
\begin{align*}
\pi^0_l(h): \: \phi(Z) \quad &\mapsto \quad \bigl( \pi^0_l(h)\phi \bigr)(Z) =
\frac 1{N(cZ+d)} \cdot \phi \bigl( (aZ+b)(cZ+d)^{-1} \bigr),  \\
\pi^0_r(h): \: \phi(Z) \quad &\mapsto \quad \bigl( \pi^0_r(h)\phi \bigr)(Z) =
\frac 1{N(a'-Zc')} \cdot \phi \bigl( (a'-Zc')^{-1}(-b'+Zd') \bigr),
\end{align*}
where
$h = \bigl(\begin{smallmatrix} a' & b' \\ c' & d' \end{smallmatrix}\bigr)
\in GL(2,\HC)$ and
$h^{-1} = \bigl(\begin{smallmatrix} a & b \\ c & d \end{smallmatrix}\bigr)$.
These two actions coincide on $SL(2,\HC) \simeq SL(4,\BB C)$
which is defined as the connected Lie subgroup of $GL(2,\HC)$ with Lie algebra
$$
\mathfrak{sl}(2,\HC) = \{ x \in \mathfrak{gl}(2,\HC) ;\: \re (\tr x) =0 \}
\simeq \mathfrak{sl}(4,\BB C).
$$

Recall harmonic polynomial functions on $\HC^{\times}$:
\begin{align*}
  {\cal H}^+ &=   \bigl\{ \phi \in \BB C[z_{11},z_{12},z_{21},z_{22}] ;\:
  \square \phi =0 \bigr\}  \\
  &= \BB C\text{-span of } \bigl\{ t^l_{n \, \underline{m}}(Z) \bigr\},  \\
  {\cal H}^- &= \bigl\{ \phi \in \BB C[z_{11},z_{12},z_{21},z_{22}, N(Z)^{-1}] ;\:
  N(Z)^{-1} \cdot \phi(Z^{-1}) \in {\cal H^+} \bigr\}  \\
  &= \BB C\text{-span of } \bigl\{
  N(Z)^{-1} \cdot t^l_{m \, \underline{n}}(Z^{-1}) \bigr\},  \\
  {\cal H} &= \bigl\{ \phi \in \BB C[z_{11},z_{12},z_{21},z_{22}, N(Z)^{-1}] ;\:
  \square \phi =0 \bigr\}  \\
  &= {\cal H}^+ \oplus {\cal H}^-,
\end{align*}
where $l = 0, \frac12, 1, \frac32, \dots$, $m,n \in \BB Z +l$,
$-l-1 \le m \le l+1$, $-l \le n \le l$.
Differentiating the actions $\pi^0_l$ and $\pi^0_r$, we obtain actions of
$\mathfrak{gl}(2,\HC) \simeq \mathfrak{gl}(4,\BB C)$ which preserve
the spaces ${\cal H}$, ${\cal H}^-$ and ${\cal H}^+$.
By abuse of notation, we denote these Lie algebra actions by
$\pi^0_l$ and $\pi^0_r$ respectively.
They are described in Subsection 3.2 of \cite{FL2}.

We have a non-degenerate antisymmetric $\mathfrak{gl}(2,\HC)$-invariant
bilinear pairing between $(\pi^0_l, {\cal H})$ and $(\pi^0_r, {\cal H})$
given by equation (19) in \cite{L2}:
\begin{equation}  \label{H-pairing2}
(\phi_1,\phi_2) =
\frac i{2\pi^3} \int_{Z \in U(2)_R} (\degt\phi_1)(Z) \cdot \phi_2(Z)
\,\frac{dV}{N(Z)}, \qquad \phi_1,\phi_2 \in {\cal H}.
\end{equation}
(This pairing is independent of the choice of $R>0$.)
The operator $\degt$ is the degree operator plus identity:
$$
\degt f = f + \deg f = f + z_{11}\frac{\partial f}{\partial z_{11}} +
z_{12}\frac{\partial f}{\partial z_{12}}
+ z_{21}\frac{\partial f}{\partial z_{21}}
+ z_{22}\frac{\partial f}{\partial z_{22}}.
$$
Pairing (\ref{H-pairing2}) satisfies the following symmetry relation. Let
$$
\tilde \phi_i(Z) = N(Z)^{-1} \cdot \phi_i(Z^{-1}), \qquad i=1,2;
$$
then
\begin{equation}  \label{inversion-symmetry}
(\phi_1,\phi_2) = -(\tilde\phi_1,\tilde\phi_2).
\end{equation}
(This follows from Lemma 61 in \cite{FL1}.)

\subsection{Representations $(\varpi_2^l,\Zh)$, $(\varpi_2^r,\Zh)$}  \label{Zh-subsection}

Let $\widetilde{\Zh}$ denote the space of $\BB C$-valued functions on $\HC$
(possibly with singularities) which are holomorphic with respect to the
complex variables $z_{11}$, $z_{12}$, $z_{21}$, $z_{22}$.
We define two very similar actions of $GL(2,\HC)$ on $\widetilde{\Zh}$:
\begin{align*}
\varpi_2^l(h): \: f(Z) \quad &\mapsto \quad \bigl( \varpi_2^l(h)f \bigr)(Z) =
\frac {f \bigl( (aZ+b)(cZ+d)^{-1} \bigr)}{N(cZ+d)^2 \cdot N(a'-Zc')},  \\
\varpi_2^r(h): \: f(Z) \quad &\mapsto \quad \bigl( \varpi_2^r(h)f \bigr)(Z) =
\frac {f \bigl( (aZ+b)(cZ+d)^{-1} \bigr)}{N(cZ+d) \cdot N(a'-Zc')^2},
\end{align*}
where
$h = \bigl(\begin{smallmatrix} a' & b' \\ c' & d' \end{smallmatrix}\bigr)
\in GL(2,\HC)$ and 
$h^{-1} = \bigl(\begin{smallmatrix} a & b \\ c & d \end{smallmatrix}\bigr)$.
(These actions coincide on $SL(2,\HC)$.)
Note that $\varpi_2^l$ is the action $\varpi_2$ in the notations of \cite{L1}.

Recall polynomial functions on $\HC^{\times}$:
\begin{align*}
  \Zh^+ &= \{\text{polynomial functions on $\HC$}\}
  = \BB C[z_{11},z_{12},z_{21},z_{22}]  \\
  &= \BB C\text{-span of } \bigl\{ N(Z)^k \cdot t^l_{n \, \underline{m}}(Z);\:
  k=0,1,2,\dots \bigr\},  \\
  \Zh &= \bigl\{\text{polynomial functions on $\HC^{\times}$}\bigr\}
  = \BB C[z_{11},z_{12},z_{21},z_{22}, N(Z)^{-1}]  \\
  &= \BB C\text{-span of } \bigl\{ N(Z)^k \cdot t^l_{n \, \underline{m}}(Z);\:
  k \in \BB Z \bigr\},
\end{align*}
where $l = 0, \frac12, 1, \frac32, \dots$, $m,n \in \BB Z +l$,
$-l-1 \le m \le l+1$, $-l \le n \le l$.

Differentiating the $\varpi_2^l$ and $\varpi_2^r$ actions, we obtain actions
of $\mathfrak{gl}(2,\HC)$ (still denoted by $\varpi_2^l$ and $\varpi_2^r$
respectively), which preserve the spaces $\Zh$ and $\Zh^+$.

\begin{thm}[Theorem 8 in \cite{L1}]
The spaces
\begin{align*}
\Zh^+ &= \BB C \text{-span of }
\bigl\{ N(Z)^k \cdot t^l_{n\,\underline{m}}(Z);\: k \ge 0 \bigr\}, \\
\Zh_2^- &= \BB C \text{-span of }
\bigl\{ N(Z)^k \cdot t^l_{n\,\underline{m}}(Z);\: k \le -(2l+3) \bigr\}, \\
I_2^- &= \BB C \text{-span of }
\bigl\{ N(Z)^k \cdot t^l_{n\,\underline{m}}(Z);\: k \le -2 \bigr\}, \\
I_2^+ &= \BB C \text{-span of }
\bigl\{ N(Z)^k \cdot t^l_{n\,\underline{m}}(Z);\: k \ge -(2l+1) \bigr\}, \\
J_2 &= \BB C \text{-span of }
\bigl\{ N(Z)^k \cdot t^l_{n\,\underline{m}}(Z);\: -(2l+1) \le k \le -2 \bigr\}
\end{align*}
and their sums are the only proper subspaces of $\Zh$ that are invariant
under either $\varpi_2^l$ or $\varpi_2^r$ actions of $\mathfrak{gl}(2,\HC)$
(see Figure \ref{decomposition-fig2}).

The irreducible components of $(\varpi_2^l, \Zh)$ and $(\varpi_2^r, \Zh)$
are the subrepresentations
$$
(\varpi_2^*, \Zh^+), \qquad (\varpi_2^*, \Zh_2^-), \qquad (\varpi_2^*, J_2)
$$
and the quotients
$$
\bigl( \varpi_2^*, \Zh/(I_2^- \oplus \Zh^+) \bigr)
= \bigl( \varpi_2^*, I_2^+/(\Zh^+ \oplus J_2) \bigr), \quad
\bigl( \varpi_2^*, \Zh/(\Zh_2^- \oplus I_2^+) \bigr)
= \bigl( \varpi_2^*, I_2^-/(\Zh_2^- \oplus J_2) \bigr),
$$
where $*$ stands for $l$ or $r$.
\end{thm}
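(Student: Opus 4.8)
The plan is to translate the statement into an explicit analysis of how $\mathfrak{gl}(2,\HC)$ moves among the basis vectors $N(Z)^k t^l_{n\underline{m}}(Z)$, arranged by the pair $(k,l)$ as in Figure \ref{decomposition-fig2}. For each $(k,l)$ let $V_{k,l}$ denote the span of the basis vectors $N(Z)^k t^l_{n\underline{m}}(Z)$ with the indicated ranges of $m,n$; call this the $(k,l)$-block. I would use the conformal $\BB Z$-grading $\mathfrak{gl}(2,\HC) = \mathfrak{g}_{-1}\oplus\mathfrak{g}_0\oplus\mathfrak{g}_{+1}$, where $\mathfrak{g}_{-1}$ are the translations (acting by $\sum b^\mu\partial_\mu$), $\mathfrak{g}_0 = \mathfrak{gl}_2\oplus\mathfrak{gl}_2$, and $\mathfrak{g}_{+1}$ the special conformal transformations. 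Here $\mathfrak{g}_0$ preserves every block while $\mathfrak{g}_{\pm1}$ shift the homogeneity degree $d=2k+2l$ by $\pm1$. Since the degree operator and the two $\mathfrak{sl}_2$-Casimirs all lie in $\mathfrak{g}_0$, act semisimply, and detect $(k,l)$ via $d$ and $l(l+1)$, and since each $V_{k,l}$ is $\mathfrak{g}_0$-irreducible (from the decomposition in \cite{FL1, FL2}), every $\varpi_2^{*}$-invariant subspace is automatically a direct sum of blocks. It therefore suffices to determine, for each pair of neighbouring blocks, whether the action connects them or not.

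The key step is to introduce two distinguished operators in $U(\mathfrak{gl}(2,\HC))$ that preserve the spin $l$: the d'Alembertian $\square$, which is the $\mathfrak{g}_0$-invariant (singlet) quadratic element of $U(\mathfrak{g}_{-1})$ and lowers $k$ by one, and its conformal partner $\square^{*}$, the singlet quadratic element of $U(\mathfrak{g}_{+1})$, which raises $k$ by one. Both lie in $U(\mathfrak{gl}(2,\HC))$, hence preserve every invariant subspace, and both act within a fixed $l$ by a scalar on each block. Because the translations act by $f(Z)\mapsto f(Z+b)$ and the singlet in $S^2(\mathfrak{g}_{+1})$ has a factor-independent zeroth-order term (the exponents in $N(cZ+d)^2N(a'-Zc')$ and $N(cZ+d)N(a'-Zc')^2$ both sum to $3$), the operators $\square$ and $\square^{*}$ act identically under $\varpi_2^l$ and $\varpi_2^r$; this is exactly why the two actions have the same invariant-subspace lattice. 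Using the classical commutator $[\square,N(Z)^k]$ one gets
$$
\square\bigl(N(Z)^k t^l_{n\underline{m}}\bigr)=4k(k+2l+1)\,N(Z)^{k-1}t^l_{n\underline{m}},
$$
so the downward step is blocked precisely at $k=0$ and $k=-(2l+1)$.

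To locate the upward walls I would invoke the inversion symmetry $N(Z)^k t^l_{n\underline{m}}(Z)\mapsto N(Z)^{-(2l+3)-k}t^l_{-m\underline{-n}}(Z)$ (the $\varpi_2$-analogue of the symmetry (\ref{inversion-symmetry}); the shift $2l+3$ reflects the weight of $\varpi_2$, as opposed to $2l+1$ for harmonics), which conjugates $\square$ into $\square^{*}$. This sends the zero set $\{0,-(2l+1)\}$ of the lowering coefficient to $\{-2,-(2l+3)\}$, so $\square^{*}$ blocks the upward step exactly at $k=-2$ and $k=-(2l+3)$. These four vanishing loci are precisely the walls separating the five bands $\{k\ge0\}$, $\{k=-1\}$, $\{-(2l+1)\le k\le -2\}$, $\{k=-(2l+2)\}$ and $\{k\le-(2l+3)\}$. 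Reading off which unions of bands are closed under both $\square$ and $\square^{*}$ then yields the invariance of $\Zh^+$, $\Zh_2^-$, $J_2$, $I_2^{\pm}$ and their sums, together with the stated realizations of the two singleton bands as the subquotients $I_2^+/(\Zh^+\oplus J_2)$ and $I_2^-/(\Zh_2^-\oplus J_2)$.

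Finally I would prove irreducibility of each of the five constituents and rule out further invariant subspaces. The analysis above shows $\square$ and $\square^{*}$ connect all blocks of a fixed $l$ inside each band; it then remains to connect different values of $l$, for which I would use the genuine $\mathfrak{g}_{\pm1}$ transitions — for instance the degree-lowering, spin-raising map $V_{k,l}\to V_{k-1,l+\frac12}$, whose coefficient is proportional to $k$ — and verify that within each band every block is reachable from every other by a chain of nonvanishing transitions. Combined with $\mathfrak{g}_0$-irreducibility of the blocks, this shows each band is generated by any nonzero vector, hence irreducible; and since every invariant subspace is a union of bands closed under the (now explicit) extensions between them, the list is complete. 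I expect the main obstacle to be the explicit evaluation of $\square^{*}$ (equivalently, the $l$-changing coefficients of $\mathfrak{g}_{+1}$) and the confirmation that its zeros fall exactly at $k=-2$ and $k=-(2l+3)$: unlike the clean formula for $\square$, this forces one to track the automorphy factors $N(cZ+d)^2 N(a'-Zc')$ carefully, and it is there that the asymmetry between the upper walls $\{0,-(2l+1)\}$ and the lower walls $\{-2,-(2l+3)\}$ originates. The degenerate small-spin cases $l=0,\tfrac12$, where some bands collapse or are empty, would be checked separately.
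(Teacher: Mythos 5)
First, a framing remark: the paper does not prove this statement --- it is imported verbatim as Theorem 8 of \cite{L1} --- so your proposal can only be measured against the computation carried out there and in Subsection 3.2 of \cite{FL2}, which determines the invariant subspaces by writing out the first-order action of $\mathfrak{gl}(2,\HC)$ on the basis $N(Z)^k\,t^l_{n\,\underline{m}}(Z)$ and locating the zeros of the resulting transition coefficients. Your overall architecture is the right one and matches that computation in spirit: the blocks $V_{k,l}$ are $\mathfrak{g}_0$-irreducible with pairwise distinct central characters, so every invariant subspace is a sum of blocks; your formula $\square\bigl(N(Z)^k t^l_{n\,\underline{m}}\bigr)=4k(k+2l+1)N(Z)^{k-1}t^l_{n\,\underline{m}}$ is correct; and the inversion $k\mapsto -k-2l-3$ really does exchange the pairs of walls $\{0,-(2l+1)\}$ and $\{-2,-(2l+3)\}$. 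The observation that $\varpi_2^l$ and $\varpi_2^r$ have the same invariant subspaces is also correct, though it is seen more directly from the fact that the two actions agree on $\mathfrak{sl}(2,\HC)$ and differ only by scalars on each block.

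The genuine gap is in the middle step. Invariance of a union of blocks is governed by the four \emph{first-order} transitions $\mathfrak{g}_{-1}\colon V_{k,l}\to V_{k,\,l-\frac12}\oplus V_{k-1,\,l+\frac12}$ and $\mathfrak{g}_{+1}\colon V_{k,l}\to V_{k,\,l+\frac12}\oplus V_{k+1,\,l-\frac12}$, not by the composites $\square\colon V_{k,l}\to V_{k-1,l}$ and $\square^{*}\colon V_{k,l}\to V_{k+1,l}$, which land in entirely different blocks. Closure under $\square$ and $\square^{*}$ is necessary but not sufficient for invariance, and the vanishing loci of $\square,\square^{*}$ neither imply nor are implied by the vanishing of the spin-changing arrows that actually cut out the subspaces: for instance, the diagonal wall bounding $I_2^+$ is the vanishing of the $k$-preserving, spin-lowering transition $V_{k,l}\to V_{k,\,l-\frac12}$ along $k=-(2l+1)$, an arrow that $\square$ and $\square^{*}$ never see. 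So ``reading off which unions of bands are closed under both $\square$ and $\square^{*}$'' cannot by itself yield the invariance of $\Zh^+$, $\Zh_2^-$, $J_2$, $I_2^{\pm}$, nor the completeness of the list; the first-order coefficients, which you defer to a final connectivity check, are the actual content of the theorem and must be computed in full (at which point $\square$ and $\square^{*}$ become redundant, serving only as a consistency check on where the walls fall). The proposal is repairable along the lines you sketch at the end, but as structured it establishes neither the invariance claims nor their exhaustiveness.
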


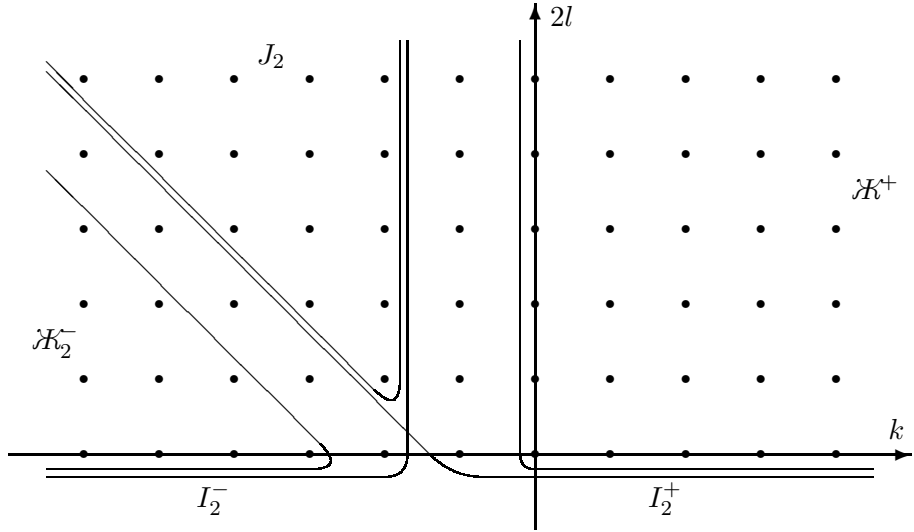
\begin{figure}
\begin{center}
\setlength{\unitlength}{1mm}
\begin{picture}(120,70)
\multiput(10,10)(10,0){11}{\circle*{1}}
\multiput(10,20)(10,0){11}{\circle*{1}}
\multiput(10,30)(10,0){11}{\circle*{1}}
\multiput(10,40)(10,0){11}{\circle*{1}}
\multiput(10,50)(10,0){11}{\circle*{1}}
\multiput(10,60)(10,0){11}{\circle*{1}}

\thicklines
\put(70,0){\vector(0,1){70}}
\put(0,10){\vector(1,0){120}}

\thinlines
\put(68,10){\line(0,1){55}}
\put(70,8){\line(1,0){45}}
\qbezier(68,10)(68,8)(70,8)

\put(52,20){\line(0,1){45}}
\put(48.6,18.6){\line(-1,1){43.6}}
\qbezier(52,20)(52,15.2)(48.6,18.6)

\put(5,8){\line(1,0){35}}
\put(41.4,11.4){\line(-1,1){36.4}}
\qbezier(40,8)(44.8,8)(41.4,11.4)

\put(63,7){\line(1,0){52}}
\put(56,10){\line(-1,1){51}}
\qbezier(63,7)(59,7)(56,10)

\put(5,7){\line(1,0){45}}
\put(53,10){\line(0,1){55}}
\qbezier(50,7)(53,7)(53,10)

\put(72,67){$2l$}
\put(117,12){$k$}
\put(3,24){$\Zh_2^-$}
\put(33,62){$J_2$}
\put(112,44){$\Zh^+$}
\put(25,3){$I_2^-$}
\put(85,3){$I_2^+$}
\end{picture}
\end{center}
\caption{Decomposition of $(\varpi_2^l,\Zh)$ and $(\varpi_2^r,\Zh)$
into irreducible components.}
\label{decomposition-fig2}
\end{figure}

The quotient representations can be identified as follows:

\begin{prop}[Proposition 10 in \cite{L1}]  \label{quotient-prop}
As representations of $\mathfrak{gl}(2,\HC)$,
$$
\bigl( \varpi_2^l, \Zh/(I_2^- \oplus \Zh^+) \bigr) \simeq (\pi^0_l, {\cal H}^+),
\qquad 
\bigl(\varpi_2^l, \Zh/(\Zh_2^- \oplus I_2^+)\bigr) \simeq (\pi^0_l, {\cal H}^-),
$$
$$
\bigl( \varpi_2^r, \Zh/(I_2^- \oplus \Zh^+) \bigr) \simeq (\pi^0_r, {\cal H}^+),
\qquad
\bigl(\varpi_2^r, \Zh/(\Zh_2^- \oplus I_2^+)\bigr) \simeq (\pi^0_r, {\cal H}^-),
$$
in all cases the isomorphism map being
\begin{equation}  \label{harm-iso}
{\cal H}^{\pm} \ni \phi(Z) \quad \mapsto \quad
\frac{\degt \phi(Z)}{N(Z)} \in
\begin{matrix} \Zh/(\Zh_2^- \oplus I_2^+) \\ \text{or} \\
\Zh/(I_2^- \oplus \Zh^+). \end{matrix}
\end{equation}
The inverse of this isomorphism is given by
$$
\begin{matrix} \Zh/(\Zh_2^- \oplus I_2^+) \\ \text{or} \\
\Zh/(I_2^- \oplus \Zh^+) \end{matrix} \ni f(Z)
\quad \mapsto \quad \biggl\langle f(Z), \frac1{N(Z-W)} \biggr\rangle_Z
= \frac i{2\pi^3} \int_{Z \in U(2)_R} \frac{f(Z)\,dV}{N(Z-W)} \in {\cal H}.
$$
\end{prop}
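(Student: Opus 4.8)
The plan is to prove all four isomorphisms at once, carrying out the first, $\bigl(\varpi_2^l,\Zh/(I_2^-\oplus\Zh^+)\bigr)\simeq(\pi^0_l,{\cal H}^+)$, in detail: the $l\leftrightarrow r$ cases are identical after exchanging the two actions, and the ${\cal H}^-$ cases are entirely analogous (and can also be linked to the ${\cal H}^+$ cases through the inversion symmetry (\ref{inversion-symmetry})). First I would pin down the quotient concretely. By Theorem 8, in the basis $\{N(Z)^k\,t^l_{n\,\underline m}(Z)\}$ the subspace $\Zh^+$ is the part with $k\ge 0$ and $I_2^-$ the part with $k\le -2$, so $\Zh/(I_2^-\oplus\Zh^+)$ has as a basis the images of the single layer $k=-1$, namely $\{[N(Z)^{-1}t^l_{n\,\underline m}(Z)]\}$; likewise $\Zh/(\Zh_2^-\oplus I_2^+)$ is the layer $k=-(2l+2)$.

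Next I would check that the stated map $\Phi:\phi\mapsto\degt\phi/N$ is a linear isomorphism onto the correct layer. Since $t^l_{n\,\underline m}$ is homogeneous of degree $2l$, one has $\degt\,t^l_{n\,\underline m}=(2l+1)\,t^l_{n\,\underline m}$, so $\Phi\bigl(t^l_{n\,\underline m}\bigr)=(2l+1)\,N(Z)^{-1}t^l_{n\,\underline m}(Z)$, a nonzero multiple of the corresponding quotient basis vector; hence $\Phi$ is diagonal with nonzero entries in these bases and is a bijection ${\cal H}^+\to\Zh/(I_2^-\oplus\Zh^+)$. For ${\cal H}^-$ I would use $t^l_{m\,\underline n}(Z^{-1})\propto N(Z)^{-2l}t^l_{-n\,\underline{-m}}(Z)$ to see that the basis of ${\cal H}^-$ is homogeneous of degree $-(2l+2)$, whence $\Phi$ lands it in the layer $k=-(2l+2)$, the basis of $\Zh/(\Zh_2^-\oplus I_2^+)$.

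To identify the inverse and verify well-definedness on the quotient simultaneously, I would apply the integral operator $\Psi(f)(W)=\bigl\langle f(Z),1/N(Z-W)\bigr\rangle_Z$ to a basis element $N(Z)^k t^l_{n\,\underline m}(Z)$, expand $1/N(Z-W)$ by (\ref{1/N-expansion}), and integrate term by term against the pairing (\ref{pairing}) using the orthogonality relations (\ref{orthogonality}) (after rewriting $N(Z)^{-1}t^l_{n\,\underline m}(Z)$ in the $Z^{-1}$ form via the inversion formula so that (\ref{orthogonality}) applies directly). The outcome is that the pairing is nonzero precisely on the critical layer and vanishes on every other $k$, in particular on all of $I_2^-\oplus\Zh^+$ (resp.\ $\Zh_2^-\oplus I_2^+$). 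This shows at once that $\Psi$ descends to the quotient, that its image is a harmonic polynomial, i.e.\ lies in ${\cal H}^+$ (resp.\ ${\cal H}^-$), and---matching the scalars from the previous paragraph---that $\Psi$ and $\Phi$ are mutually inverse linear maps.

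The heart of the proof is $\mathfrak{gl}(2,\HC)$-equivariance, which I would establish for the integral operator $\Psi$; equivariance of its inverse $\Phi$ is then automatic. Equivariance of $\Psi$ is precisely the conformal covariance of the kernel $1/N(Z-W)$: letting $h\in GL(2,\HC)$ act simultaneously on $Z$ and $W$, the automorphy factor $N(cZ+d)^2\,N(a'-Zc')$ built into $\varpi_2^l$ must cancel the Jacobian of the fractional-linear substitution on the cycle $U(2)_R$ together with the transformation of $N(Z-W)^{-1}$, leaving exactly the single factor $N(cW+d)^{-1}$ that defines $\pi^0_l$. I expect this automorphy-factor bookkeeping to be the main obstacle: it must be carried out carefully while respecting the convergence constraint $ZW^{-1}\in\BB D^+$ that legitimizes the term-by-term integration, as well as the $R$-independence of the pairing. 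Alternatively, one may verify directly that $\Phi$ intertwines the actions of a generating set of $\mathfrak{gl}(2,\HC)$, the delicate case being the special-conformal generators. In either route, the irreducibility of the quotient provided by Theorem 8 serves as a consistency check and, combined with the bijectivity established above, confirms that $\Phi$ is the desired isomorphism of $\mathfrak{gl}(2,\HC)$-modules.
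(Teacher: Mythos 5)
The paper does not actually prove this proposition --- it is imported verbatim as Proposition 10 of \cite{L1} --- so there is no in-paper proof to compare against. Judged on its own, your outline is correct and is the natural argument: the identification of the quotients with the single layers $k=-1$ (resp.\ $k=-(2l+2)$) is right, the scalar bookkeeping is consistent ($\degt$ acts by $2l+1$ on $t^l_{n\,\underline{m}}$, which exactly cancels the $\tfrac1{2l+1}$ produced by the orthogonality relations (\ref{orthogonality}), so $\Psi\circ\Phi=\mathrm{id}$), and the equivariance computation for $\Psi$ via the conformal covariance of $N(Z-W)^{-1}$ and the Jacobian factor $N(cZ+d)^{-2}N(a'-Zc')^{-2}$ for $dV$ closes up precisely as you describe, first for $h\in U(2,2)_R$ and then for $\mathfrak{gl}(2,\HC)$ by complexification. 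The only place I would urge a little more explicitness is the ${\cal H}^-$ case of the inverse map: there the relevant expansion of $N(Z-W)^{-1}$ is the one valid for $W$ in $\BB D^-_R$ rather than $\BB D^+_R$, and you should state which regime you are in before integrating term by term, since the two expansions select different layers $k$.
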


\subsection{Operator Version of the Magic Identities}

In \cite{L1,L2} we used the bilinear pairing (\ref{H-pairing2}) to
obtain integral operators $L^{(n)}$ on
$(\pi^0_l, {\cal H}^+) \otimes (\pi^0_r, {\cal H}^+)$
that have the conformal integrals $l^{(n)}$ as their kernels:
\begin{multline*}
L^{(n)} (\phi_1 \otimes \phi_2)(W_1,W_2) \\
= \Bigl(\frac{i}{2\pi^3}\Bigr)^2
\iint_{\genfrac{}{}{0pt}{}{Z_1 \in U(2)_{R_1}}{Z_2 \in U(2)_{R_2}}}
l^{(n)}(Z_1,Z_2;W_1,W_2) \cdot (\degt_{Z_1} \phi_1)(Z_1)
\cdot (\degt_{Z_2} \phi_2)(Z_2) \,\frac{dV_1}{N(Z_1)} \frac{dV_2}{N(Z_2)},
\end{multline*}
where $\phi_1, \phi_2 \in {\cal H}^+$, $R_1 > r_{\text{max},1}$,
$R_2 > r_{\text{max},2}$, $W_1 \in \BB D^+_{r_{\text{min},1}}$,
$W_2 \in \BB D^+_{r_{\text{min},2}}$
(recall that $r_{\text{max},i}$ and $r_{\text{min},i}$ are defined in
(\ref{r_max}) and (\ref{r_min})).
We restate Proposition 12 from \cite{L2}:

\begin{prop}  \label{Prop12}
For each $\phi_1, \phi_2 \in {\cal H}^+$, the function
$L^{(n)} (\phi_1 \otimes \phi_2)(W_1,W_2)$ is polynomial and harmonic
in each variable. In other words,
$L^{(n)} (\phi_1 \otimes \phi_2)(W_1,W_2) \in {\cal H}^+ \otimes {\cal H}^+$.
Moreover, the operator
$$
L^{(n)}: (\pi^0_l, {\cal H}^+) \otimes (\pi^0_r, {\cal H}^+)
\to (\pi^0_l, {\cal H}^+) \otimes (\pi^0_r, {\cal H}^+)
$$
is $\mathfrak{gl}(2,\HC)$-equivariant.
\end{prop}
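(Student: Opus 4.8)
The plan is to prove the two assertions in the order: first the $\mathfrak{gl}(2,\HC)$-equivariance, then --- using it --- the statement that the output is polynomial and harmonic in each variable, i.e. that it lies in ${\cal H}^+\otimes{\cal H}^+$. Everything rests on two facts already in hand: the conformal covariance of the four-point kernel $l^{(n)}$ (the mechanism behind Lemma~\ref{conformal}) and the $\mathfrak{gl}(2,\HC)$-invariance of the pairing (\ref{H-pairing2}) between $(\pi^0_l,{\cal H})$ and $(\pi^0_r,{\cal H})$.

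For the equivariance I would argue at the level of the integral kernel. Conformal covariance means that for $h$ near the identity in $GL(2,\HC)$ the kernel $l^{(n)}$ transforms with exactly the $\pi^0_l$- and $\pi^0_r$-type weight factors $N(cZ+d)^{-1}$, $N(a'-Zc')^{-1}$ in its four arguments that match the actions on the source and target of $L^{(n)}$. Replacing $\phi_1,\phi_2$ by $\pi^0_l(h)\phi_1,\pi^0_r(h)\phi_2$ and substituting $Z_i\mapsto h^{-1}Z_i$, these weight factors combine with the ones carried by $\degt_{Z_i}\phi_i\,\frac{dV_i}{N(Z_i)}$, and the invariance of (\ref{H-pairing2}) turns the substitution into the action of $\pi^0_l(h)\otimes\pi^0_r(h)$ on the output in $(W_1,W_2)$. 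The one subtlety is that $Z_i\mapsto h^{-1}Z_i$ carries the cycle $U(2)_{R_i}$ to $h^{-1}U(2)_{R_i}$; but the integrand is a closed holomorphic form, and by the relative-position conditions (\ref{r_max})--(\ref{r_min}) no singularity meets the cycle for $h$ close to the identity, so the cycle deforms back freely. Differentiating at $h=1$ gives equivariance for every $X\in\mathfrak{gl}(2,\HC)$; since this is an infinitesimal statement, only the small deformation is needed and the delicate global contour questions are avoided.

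For polynomiality I would expand everything in matrix coefficients. The base case $n=1$ is transparent: carrying out the $Z_1$- and $Z_2$-integrations by the reproducing formula (the inverse isomorphism in Proposition~\ref{quotient-prop}) collapses $l^{(1)}$ to $\phi_1(T)\phi_2(T)$, after which
$$
L^{(1)}(\phi_1\otimes\phi_2)(W_1,W_2)=\frac{i}{2\pi^3}\int_{T\in U(2)_r}\frac{\phi_1(T)\,\phi_2(T)}{N(W_1-T)\,N(W_2-T)}\,dV_T .
$$
Expanding each factor $N(W_i-T)^{-1}$ by (\ref{1/N-expansion}) with $W_i$ in the interior region produces series in the harmonic polynomials $t^l_{n\,\underline m}(W_i)$, and the orthogonality relations (\ref{orthogonality}) paired against the fixed polynomial $\phi_1\phi_2$ kill all but finitely many terms, so the output is a finite combination of products $t^l_{n\,\underline m}(W_1)\,t^{l'}_{n'\,\underline{m'}}(W_2)$, hence a polynomial. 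For general $n$ the same scheme applies: expand every solid-edge factor by (\ref{1/N-expansion}) and every dashed-edge factor $N(\,\cdot\,-W_i)$ as the degree-two polynomial it is, integrate out each internal vertex $T_k$ over $U(2)_{r_k}$ using (\ref{orthogonality}), and observe that the fixed degrees of $\phi_1,\phi_2$ force the resulting multiple series to terminate.

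Finally, harmonicity, which I expect to be the main obstacle. Before integration, the dependence of the integrand on a fixed $W_i$ is a product of harmonic factors $N(W_i-T_k)^{-1}$ with the degree-two numerator factors coming from the dashed edges, and such a product is \emph{not} harmonic (for instance $\square_{W_i}N(Z-W_i)\ne 0$), so the needed cancellation is not visible term by term. Rather than chase it directly, I would deduce harmonicity from the two facts already secured: the output is a polynomial in each $W_i$, and, by equivariance, it transforms under $\pi^0_l$ in $W_1$ and under $\pi^0_r$ in $W_2$. Since $\pi^0_l$ and $\pi^0_r$ preserve $\widetilde{\cal H}$, and since ${\cal H}^+=\Zh^+\cap\ker\square$ is precisely the $\pi^0_l$-invariant space of polynomials --- the net-edge identity (\ref{1edge}) being exactly what matches the weight in $W_i$ to this space rather than to some $N(W_i)^k{\cal H}^+$ --- a polynomial transforming covariantly under $\pi^0_l$ must be annihilated by $\square$. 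Applying this in each variable yields $L^{(n)}(\phi_1\otimes\phi_2)\in{\cal H}^+\otimes{\cal H}^+$ and completes both assertions.
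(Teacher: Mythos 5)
First, a caveat: this paper does not actually prove Proposition \ref{Prop12}; it is imported verbatim from \cite{L2}. What the present paper does prove are the analogous statements for $\bar L^{(n)}$ (Lemma \ref{L-bar_equivariance} and Proposition \ref{polynomial-prop}) and, separately, the harmonicity of $l^{(n)}$ in all four variables (Proposition \ref{harmonic-input+output}), and the latter costs the paper an entire chain of lemmas involving the dual operator $\acute{L}^{(n)}$, the inversion $Z\mapsto Z^{-1}$, and the duality argument of Corollary \ref{harm-harm-cor}. Your equivariance argument is fine and is essentially the paper's proof of Lemma \ref{L-bar_equivariance} (conformal covariance of the kernel, invariance of the pairing, and a small contour deformation justified by holomorphy of the integrand). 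Your polynomiality argument is right in spirit but under-justified for $n\ge 2$: once a vertex $Z_i$ carries several solid and dashed edges the reproducing formula no longer collapses the $Z_i$-integration in one step, and the claim that the resulting multiple matrix-coefficient series terminates is exactly the point that the paper's Proposition \ref{polynomial-prop} secures by induction on the number of loops together with the annihilation Lemma \ref{I^--annihilated-lem}; asserting that ``the fixed degrees of $\phi_1,\phi_2$ force the series to terminate'' does not substitute for that induction.

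The genuine gap is the harmonicity step. The principle you invoke --- that a polynomial transforming covariantly under $\pi^0_l$ must be annihilated by $\square$ --- is false. The differentiated action $\pi^0_l$ of $\mathfrak{gl}(2,\HC)$ is by differential operators with polynomial coefficients, so it preserves all of $\Zh^+$, not just ${\cal H}^+$; indeed the paper itself uses $(\pi^0_l,\Zh^+)\otimes(\pi^0_r,\Zh^+)$ as the \emph{target} of $\bar L^{(n)}$ in Proposition \ref{polynomial-prop}, and $\Zh^+$ contains plenty of non-harmonic covariant vectors such as $N(W)$. So equivariance plus polynomiality only places $L^{(n)}(\phi_1\otimes\phi_2)$ in $\Zh^+\otimes\Zh^+$; nothing in your argument rules out the image meeting $N(W_1)\cdot\Zh^+\otimes\Zh^+$, say. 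To close this you would need genuinely new input: either a direct verification that the surviving matrix coefficients in $W_1,W_2$ are all of the harmonic type $t^l_{n\,\underline{m}}(W_i)$ with no $N(W_i)^k$, $k\ge 1$, factors (which works transparently only for $n=1$), or a multiplicity-one statement locating each irreducible constituent $(\Zh_k^+)_{irr}$ of ${\cal H}^+\otimes{\cal H}^+$ uniquely inside $(\pi^0_l,\Zh^+)\otimes(\pi^0_r,\Zh^+)$, or the route the paper takes for $\bar L^{(n)}$: pair the output against ${\cal H}^-\otimes{\cal H}^-$ via the dual operator $\acute{L}^{(n)}$ and the inversion trick, and use the orthogonality relations (\ref{orthogonality}) to kill the non-harmonic components (Lemma \ref{Zh^+-annihilated-lem} and Corollaries \ref{harm-harm-cor}--\ref{harmonic-cor}). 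As written, the final paragraph of your proposal does not prove harmonicity.
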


The main result of \cite{L2} is Theorem 14 describing the action of the
operator $L^{(n)}$ on each irreducible component of
$(\pi^0_l, {\cal H}^+) \otimes (\pi^0_r, {\cal H}^+)$.
Since these actions of $L^{(n)}$ on the irreducible components depend only
on the number of loops $n$ in the diagram and not on the composition of
the diagram, we immediately obtain an operator version of magic identities:

\begin{thm} [Corollary 15 in \cite{L2}]  \label{operator-magic}
Let $L^{(n)}$ and $\tilde L^{(n)}$ be two integral operators
corresponding to any two $n$-loop box diagrams, then
$L^{(n)} = \tilde L^{(n)}$, as operators on ${\cal H}^+ \otimes {\cal H}^+$.
\end{thm}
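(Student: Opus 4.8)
The plan is to reduce the claimed equality to Schur's Lemma, combining the equivariance of the operators with the multiplicity-free decomposition of the tensor product. First I would record, from Proposition \ref{Prop12}, that for every $n$-loop box diagram the associated operator $L^{(n)}$ preserves ${\cal H}^+ \otimes {\cal H}^+$ and is $\mathfrak{gl}(2,\HC)$-equivariant; the same applies to $\tilde L^{(n)}$. Thus both operators are intertwining endomorphisms of the representation $(\pi^0_l, {\cal H}^+) \otimes (\pi^0_r, {\cal H}^+)$, and since $\mathfrak{u}(2,2)$ is a real form of $\mathfrak{gl}(2,\HC)$, they are a fortiori $\mathfrak{u}(2,2)$-equivariant.

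Next I would bring in the decomposition
\begin{equation*}
(\pi^0_l, {\cal H}^+) \otimes (\pi^0_r, {\cal H}^+) \simeq
\bigoplus_{k=1}^{\infty} \bigl( \rho_k, (\Zh^+_k)_{irr} \bigr)
\end{equation*}
into pairwise non-isomorphic irreducible components. Because this decomposition is multiplicity-free, Schur's Lemma forces any equivariant endomorphism to leave each summand invariant and to act on it by a scalar; no summand can be carried into another, as distinct irreducibles admit no nonzero intertwiner. Writing $\mu^{(n)}_k$ and $\tilde\mu^{(n)}_k$ for the scalars by which $L^{(n)}$ and $\tilde L^{(n)}$ act on $\bigl( \rho_k, (\Zh^+_k)_{irr} \bigr)$, we conclude that each of the two operators is completely determined by its sequence of eigenvalues. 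Hence it suffices to prove $\mu^{(n)}_k = \tilde\mu^{(n)}_k$ for every $k$.

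This last equality is exactly what Theorem 14 of \cite{L2} supplies: it gives a closed expression for the scalar by which $L^{(n)}$ acts on the $k$-th irreducible component, and this expression depends only on the loop number $n$ and the index $k$, not on the combinatorial data recording how the slingshots were attached to build the diagram. Applying that formula to the two diagrams underlying $L^{(n)}$ and $\tilde L^{(n)}$ yields $\mu^{(n)}_k = \tilde\mu^{(n)}_k$ for all $k$, whence $L^{(n)} = \tilde L^{(n)}$ on ${\cal H}^+ \otimes {\cal H}^+$.

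The genuine obstacle lies entirely in Theorem 14, i.e. in the explicit evaluation of these scalars: one must compute the action of the integral operator on a distinguished (for instance, highest-weight) vector in each irreducible summand and verify that the resulting eigenvalue is insensitive to the order and placement of the slingshot attachments. Granting that evaluation, the corollary above is immediate, being a formal consequence of Schur's Lemma together with the distinctness of the irreducible components of ${\cal H}^+ \otimes {\cal H}^+$.
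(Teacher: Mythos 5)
Your argument matches the paper's own reasoning exactly: Proposition \ref{Prop12} gives equivariance, the multiplicity-free decomposition (\ref{tensor-decomp}) plus Schur's Lemma reduces everything to the eigenvalues $\mu^{(n)}_k$, and Theorem 14 of \cite{L2} shows these depend only on $n$. You also correctly locate the real content in that cited theorem, which the paper likewise does not reprove here.
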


The proof of Theorem 14 in \cite{L2} relies on the decomposition of the
tensor product representation
$(\pi^0_l, {\cal H}^+) \otimes (\pi^0_r, {\cal H}^+)$ of $\mathfrak{gl}(2,\HC)$
into irreducible components due to \cite{JV}.
Unfortunately, Theorem 82 in \cite{FL1}
(also restated as Theorem 5.4 in \cite{L1} and Theorem 13 in \cite{L2})
is not correctly stated and should be corrected as follows.

\begin{thm}
The image of the intertwining map $M_k$ from Theorem 85 in \cite{FL1} is an
irreducible subrepresentation of $(\rho_k,\Zh_k^+)$, $k=1,2,3,\dots$.

Let us denote this image by $(\Zh_k^+)_{irr}$.
The irreducible representations $\bigl( \rho_k, (\Zh_k^+)_{irr} \bigr)$,
$k=1,2,3,\dots$, of $\mathfrak{sl}(2,\HC)$ are pairwise non-isomorphic
and possess inner products which make them unitary representations of
the real form $\mathfrak{su}(2,2)$ of $\mathfrak{sl}(2,\HC)$.
\end{thm}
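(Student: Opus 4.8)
The plan is to let the unitary structure carry the argument: I will transport an invariant Hermitian inner product through $M_k$, and then read off both irreducibility and pairwise non-isomorphism as consequences, rather than proving irreducibility by a direct submodule analysis. Throughout, $M_k$ is the $\mathfrak{sl}(2,\HC)$-equivariant map of Theorem 85 in \cite{FL1}, so its image $\operatorname{im}M_k$ is automatically a subrepresentation of $(\rho_k,\Zh_k^+)$; the content to be established is that this image is \emph{irreducible} and \emph{proper}. I would first record that $\operatorname{im}M_k$ is a lowest-weight module generated by its minimal $U(2)\times U(2)$-type, and that this minimal type occurs with multiplicity one; its lowest weight, which I would compute from the minimal polynomial degree occurring in the image together with its $U(2)\times U(2)$-content, is the invariant that will separate the images for distinct $k$.

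For the unitary structure I would use that the harmonic building blocks $(\pi^0_l,{\cal H}^+)$ and $(\pi^0_r,{\cal H}^+)$, restricted to the real form $\mathfrak{su}(2,2)$, are the unitary ladder representations attached to harmonic functions, and hence carry positive-definite $\mathfrak{su}(2,2)$-invariant Hermitian inner products (cf. \cite{FL1,FL2}). The tensor product ${\cal H}^+\otimes{\cal H}^+$ then inherits the tensor-product inner product, which is again $\mathfrak{su}(2,2)$-invariant; since $M_k$ is equivariant, its kernel is an invariant subspace, and pushing the inner product forward from $(\ker M_k)^{\perp}$ onto $\operatorname{im}M_k$ equips $(\Zh_k^+)_{irr}$ with a positive-definite $\mathfrak{su}(2,2)$-invariant Hermitian form. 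This is the asserted unitary structure.

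Irreducibility now follows without further module-theoretic input. A unitary lowest-weight module is an orthogonal direct sum of irreducible submodules, and it is generated by its minimal $U(2)\times U(2)$-type; since that type occurs in $\operatorname{im}M_k$ with multiplicity one it can lie in only one of the summands, which therefore exhausts the module. Hence $\operatorname{im}M_k=(\Zh_k^+)_{irr}$ is irreducible. This is precisely the point at which the earlier formulation erred: the ambient space $\Zh_k^+$ is strictly larger and reducible, so that only the image of $M_k$ --- and not $\Zh_k^+$ itself --- is an irreducible $\mathfrak{sl}(2,\HC)$-module.

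For pairwise non-isomorphism I would compare the recorded lowest weights: an irreducible lowest-weight module is determined up to isomorphism by its lowest weight, and the lowest weight of $(\Zh_k^+)_{irr}$ depends injectively on $k$, so $(\Zh_k^+)_{irr}\not\simeq(\Zh_j^+)_{irr}$ for $k\ne j$; this matches the multiplicity-free decomposition of ${\cal H}^+\otimes{\cal H}^+$ of \cite{JV}, and one may equivalently separate the components by the Casimir eigenvalue evaluated on a lowest-weight vector. The main obstacle is the explicit analysis of $M_k$ needed to launch the argument: one must verify that $M_k$ is nonzero, that the minimal $U(2)\times U(2)$-type of $\operatorname{im}M_k$ is genuinely of multiplicity one, and that $\operatorname{im}M_k$ is a proper subspace of $\Zh_k^+$. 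I expect this to require evaluating $M_k$ on explicit lowest-weight vectors --- accessible through the matrix-coefficient expansions of Subsection \ref{matrix-coeff-subsection} --- and matching the resulting weight data against the internal grading of $\Zh_k^+$, so as to pin down the irreducible image precisely inside the larger module.
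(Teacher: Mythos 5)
First, an important caveat: the paper gives \emph{no proof} of this statement. It appears only as a correction of Theorem 82 in \cite{FL1}, with the burden carried entirely by the construction of $M_k$ in Theorem 85 of \cite{FL1} and by the Jakobsen--Vergne decomposition \cite{JV}; so there is no in-paper argument to measure your proposal against. Judged on its own terms, your outline --- transport the unitary structure of ${\cal H}^+\otimes{\cal H}^+$ through the equivariant map $M_k$, deduce irreducibility from complete reducibility plus a multiplicity-one minimal $U(2)\times U(2)$-type, and separate the components by their lowest weights --- is a legitimate and standard strategy. But as written it is a plan rather than a proof: every load-bearing step is deferred. You still must show that $M_k\ne 0$, that $\operatorname{im}M_k$ is generated by a single $K$-type occurring with multiplicity one, that the image is proper, and that the lowest weight depends injectively on $k$; these are precisely the content of the explicit formula for $M_k$ in \cite{FL1} and of the identification of the generators $(z_{ij}-z'_{ij})^{k-1}$ recorded right after the theorem. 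There is also a gap you do not flag: if $M_k$ is defined on all of ${\cal H}^+\otimes{\cal H}^+$, its image is generated by the images of \emph{all} the lowest-weight vectors of the domain, not just one, so ``generated by its minimal $K$-type of multiplicity one'' is not automatic --- you would first have to show that $M_k$ annihilates every component of the domain except one.

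Second, the unitarity detour is heavier than necessary and partly re-derives what is already available. The input from \cite{JV} that the paper leans on says that $(\pi^0_l,{\cal H}^+)\otimes(\pi^0_r,{\cal H}^+)$ decomposes as a multiplicity-free direct sum of pairwise non-isomorphic unitary irreducible lowest-weight modules. Granting that, the image of \emph{any} equivariant map out of this space is automatically a direct sum of a subset of those irreducibles, each carried isomorphically onto its image; hence irreducibility of $\operatorname{im}M_k$, its unitarizability, and the pairwise non-isomorphism for distinct $k$ all collapse to the single concrete claim that $M_k$ is nonzero on exactly one component and that this component varies injectively with $k$. That is the route the paper's phrasing suggests, and it replaces your general lowest-weight machinery by one explicit evaluation of $M_k$ on the vectors $(z_{ij}-z'_{ij})^{k-1}$, say via the matrix-coefficient expansions of Subsection \ref{matrix-coeff-subsection}. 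I would restructure the argument that way and then actually carry out that evaluation; until it is done, neither version constitutes a proof.
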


When $k=1$, we have $(\Zh_1^+)_{irr} = \Zh_1^+ = \Zh^+$.
Then equation (61) in \cite{FL1} (also restated as equations (34) in \cite{L1}
and (23) in \cite{L2}) should read as follows.
\begin{equation} \label{tensor-decomp}
(\pi^0_l, {\cal H}^+) \otimes (\pi^0_r, {\cal H}^+) \simeq
\bigoplus_{k=1}^{\infty} \bigl( \rho_k,(\Zh_k^+)_{irr} \bigr),
\end{equation}
This decomposition is obtained by treating ${\cal H}^+ \otimes {\cal H}^+$
as functions of two variables $Z, Z' \in \HC$ and filtering them by the
degree of vanishing on the diagonal $\HC \subset \HC \times \HC$. Then
$$
(\rho_1,\Zh^+) \quad \text{is generated by} \quad 1 \otimes 1,
$$
$$
\bigl( \rho_k, (\Zh_k^+)_{irr} \bigr) \quad \text{is generated by} \quad
(z_{ij}-z'_{ij})^{k-1} , \qquad k \ge 2.
$$
While decomposition (\ref{tensor-decomp}) is essential for \cite{L1} and
\cite{L2}, the arguments given there only require the existence of such
decomposition and an explicit choice of generators of 
each $(\rho_k,(\Zh_k^+)_{irr})$.
Thus the results of \cite{L1, L2} remain correct.

We also state an important conformal property of four-point box integrals.

\begin{lem} [Lemma 17 in \cite{L2}] \label{conformal}
For each $h = \bigl(\begin{smallmatrix} a' & b' \\
c' & d' \end{smallmatrix}\bigr) \in GL(2,\HC)$
sufficiently close to the identity, we have:
\begin{multline*}
l^{(n)}(\tilde Z_1, \tilde Z_2; \tilde W_1, \tilde W_2) \\
= N(a'-Z_1c') \cdot N(cZ_2+d) \cdot N(cW_1+d) \cdot N(a'-W_2c') \cdot
l^{(n)}(Z_1,Z_2;W_1,W_2),
\end{multline*}
where $h^{-1} = \bigl(\begin{smallmatrix} a & b \\
c & d \end{smallmatrix}\bigr)$,
$\tilde Z_i = (aZ_i+b)(cZ_i+d)^{-1}$ and $\tilde W_i = (aW_i+b)(cW_i+d)^{-1}$,
$i=1,2$.
\end{lem}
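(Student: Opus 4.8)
The plan is to prove the identity by a direct change of variables in each integration variable, using the cocycle behaviour of $N(X-Y)$ under the fractional linear action together with the combinatorial incidence counts (\ref{1edge})--(\ref{4edges}) to keep track of the resulting factors. The key algebraic tool is the cocycle identity: writing $\tilde X=(aX+b)(cX+d)^{-1}$ and using the two presentations of the action in (\ref{conformal-action}) for the two arguments, one gets
$$
\tilde X-\tilde Y=(a'-Xc')^{-1}\,(X-Y)\,(cY+d)^{-1},
\qquad\text{hence}\qquad
N(\tilde X-\tilde Y)=\frac{N(X-Y)}{N(a'-Xc')\cdot N(cY+d)}.
$$
Comparing this with the same identity for $N(\tilde Y-\tilde X)$ shows that the ratio $N(a'-Xc')/N(cX+d)$ is independent of $X$; I denote it by $\kappa$ (so $\kappa=1$ at $h=1$). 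I also use the Jacobian of the holomorphic $4$-form under $S\mapsto h\cdot S$, which by \cite{FL1} is $N(a'-Sc')^{-2}N(cS+d)^{-2}=\kappa^{-2}N(cS+d)^{-4}$.

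Starting from $l^{(n)}(\tilde Z_1,\tilde Z_2;\tilde W_1,\tilde W_2)$, whose dummy variables $T_1,\dots,T_n$ run over the fixed cycles $U(2)_{r_1},\dots,U(2)_{r_n}$, I would substitute $T_k=h\cdot S_k$ and apply the cocycle identity to every edge factor. Each factor $N(\tilde Y-\tilde Y')^{\pm1}$ splits into its \emph{original} part $N(Y-Y')^{\pm1}$, written in the untilded variables (the $Z_i,W_i$ and the new $S_k$), and a \emph{cocycle} part. The original parts reassemble exactly the integrand of $l^{(n)}(Z_1,Z_2;W_1,W_2)$, including its numerator factors; the cocycle parts contribute a factor $N(c\,\cdot\,+d)^{\pm1}$ at each of the two endpoints and a factor $\kappa^{\pm1}$ for the edge, with sign $+$ on solid and $-$ on dashed edges, while every measure $dV_{T_k}$ contributes the Jacobian $\kappa^{-2}N(cS_k+d)^{-4}$.

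The factors are then organised purely by incidence counts. At an integration vertex $T_k$ the net power of $N(cS_k+d)$ coming from its incident edges equals $\#\{\text{solid}\}-\#\{\text{dashed}\}=4$ by (\ref{4edges}), which cancels exactly the $N(cS_k+d)^{-4}$ of the Jacobian; hence no cocycle factor in the integration variables survives, and what remains under the integral sign is precisely the integrand of $l^{(n)}(Z_1,Z_2;W_1,W_2)$ in the variables $S_k$. At each external vertex the net power is $1$ by (\ref{1edge}), so each of $Z_1,Z_2,W_1,W_2$ contributes a single factor $N(cY+d)$. Finally the $\kappa$-exponent totals $(E_{\mathrm{solid}}-E_{\mathrm{dashed}})-2n$, and since $2(E_{\mathrm{solid}}-E_{\mathrm{dashed}})=\sum_{\text{ext}}1+\sum_{\text{int}}4=4+4n$ this equals $2$; distributing the two factors of $\kappa$ onto $Z_1$ and $W_2$ via $N(a'-Yc')=\kappa N(cY+d)$, the prefactor becomes exactly $N(a'-Z_1c')\,N(cZ_2+d)\,N(cW_1+d)\,N(a'-W_2c')$, as claimed.

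The genuine difficulty is analytic rather than combinatorial. After the substitution each $S_k$ ranges over the displaced cycle $h^{-1}(U(2)_{r_k})$, so to recognise the remaining integral as $l^{(n)}(Z_1,Z_2;W_1,W_2)$ I must deform every $h^{-1}(U(2)_{r_k})$ back to $U(2)_{r_k}$ without crossing the singularities of the integrand, which sit on the loci $N(S_k-S_{k'})=0$, $N(Z_i-S_k)=0$ and $N(W_i-S_k)=0$ arising from the solid edges. This is precisely where the hypothesis that $h$ be sufficiently close to the identity enters: for such $h$ the cycles $h^{-1}(U(2)_{r_k})$ are small perturbations of the $U(2)_{r_k}$, and the relative-position data built into the integral --- the ordering $r_i<r_j$ whenever $T_i\prec T_j$, together with $Z_i\in\BB D^-_{r_{\mathrm{max},i}}$ and $W_i\in\BB D^+_{r_{\mathrm{min},i}}$ from (\ref{r_max})--(\ref{r_min}) --- keep the connecting homotopy disjoint from these polar loci, so that by Stokes' theorem the value of the integral is unchanged. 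Carrying this non-intersection out for all $n$ cycles simultaneously is the main obstacle, and it is exactly the subtlety that distinguishes the Minkowski case from the Euclidean one.
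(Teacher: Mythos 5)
Your proof is correct; note that the paper itself does not prove Lemma \ref{conformal} but imports it from Lemma 17 of \cite{L2}, and your argument --- the cocycle identity $N(\tilde X-\tilde Y)=N(X-Y)/\bigl(N(a'-Xc')\,N(cY+d)\bigr)$, the Jacobian of $dV$, bookkeeping via the incidence counts (\ref{1edge})--(\ref{4edges}), and a contour deformation from $h^{-1}(U(2)_{r_k})$ back to $U(2)_{r_k}$ justified by $h$ being near the identity --- is exactly the intended one. It is the same computation the paper carries out for the special element $\bigl(\begin{smallmatrix} 0 & 1 \\ 1 & 0 \end{smallmatrix}\bigr)$ when deriving (\ref{l-inversion}), and you correctly isolate the contour-deformation step (where the relative position of the cycles and the hypothesis that $h$ is close to the identity enter) as the genuinely Minkowskian part of the argument.
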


\section{Proof of Magic Identities}  \label{proof-section}

In this section we prove the Minkowski metric magic identities,
as stated in Theorem \ref{magic}.
The proof relies on the already established operator version of magic
identities (Theorem \ref{operator-magic}).

\subsection{Operator $\bar L^{(n)}$ and Its Properties}

We mostly work with integral operators
$\bar L^{(n)}$ on $(\varpi_2^l, \Zh) \otimes (\varpi_2^r, \Zh)$
obtained by pairing $l^{(n)}$ with elements of $\Zh \otimes \Zh$
via (\ref{pairing}):
\begin{equation}  \label{L-bar}
\bar L^{(n)} (f_1 \otimes f_2)(W_1,W_2)
= \Bigl(\frac{i}{2\pi^3}\Bigr)^2
\iint_{\genfrac{}{}{0pt}{}{Z_1 \in U(2)_{R_1}}{Z_2 \in U(2)_{R_2}}}
l^{(n)}(Z_1,Z_2;W_1,W_2) \cdot f_1(Z_1) \cdot f_2(Z_2) \,dV_1 dV_2,
\end{equation}
where $f_1, f_2 \in \Zh$, $R_1 > r_{\text{max},1}$,
$R_2 > r_{\text{max},2}$, $W_1 \in \BB D^+_{r_{\text{min},1}}$,
$W_2 \in \BB D^+_{r_{\text{min},2}}$.
Clearly, $L^{(n)}$ and $\bar L^{(n)}$ are related as
\begin{equation}  \label{L-L-bar-rel}
L^{(n)} (\phi_1 \otimes \phi_2)(W_1,W_2) = 
\bar L^{(n)} \biggl( \frac{\degt \phi_1(Z_1)}{N(Z_1)} \otimes
\frac{\degt \phi_2(Z_2)}{N(Z_2)} \biggr)(W_1,W_2), \qquad
\phi_1, \phi_2 \in {\cal H}^+,
\end{equation}
and the map
$$
{\cal H}^+ \ni \: \phi \mapsto \frac{\degt \phi(Z)}{N(Z)} \: \in \Zh
$$
is essentially the isomorphism (\ref{harm-iso}).
We will see shortly that $\bar L^{(n)}$ is a
$\mathfrak{gl}(2,\HC)$-equivariant map
$$
(\varpi_2^l, \Zh) \otimes (\varpi_2^r, \Zh) \to
(\pi^0_l, \Zh^+) \otimes (\pi^0_r, \Zh^+)
$$
(Proposition \ref{polynomial-prop}).
But as a preliminary definition let
$$
\widetilde{\Zh^+ \otimes \Zh^+}
= \bigl\{ \text{analytic functions $f(W_1,W_2):
  \BB D^+_{r_{\text{min},1}} \times \BB D^+_{r_{\text{min},2}} \to \BB C$} \bigr\}.
$$
By construction (\ref{L-bar}), $\bar L^{(n)}$ maps
$\Zh \otimes \Zh$ into $\widetilde{\Zh^+ \otimes \Zh^+}$.
We let $\mathfrak{gl}(2,\HC)$ act on $\widetilde{\Zh^+ \otimes \Zh^+}$ by
differentiating
$$
\bigl( (\pi^0_l \otimes \pi^0_r)(h) F \bigr)(W_1,W_2) =
\frac{F\bigl( (aW_1+b)(cW_1+d)^{-1}, (a'-W_2c')^{-1}(-b'+W_2d') \bigr)}
{N(cW_1+d) \cdot N(a'-W_2c')},
$$
where $F \in \widetilde{\Zh^+ \otimes \Zh^+}$,
$h = \bigl(\begin{smallmatrix} a' & b' \\ c' & d' \end{smallmatrix}\bigr)
\in GL(2,\HC)$ and
$h^{-1} = \bigl(\begin{smallmatrix} a & b \\ c & d \end{smallmatrix}\bigr)$.
Similarly, we obtain an action of the group $U(2) \times U(2)$
on $\widetilde{\Zh^+ \otimes \Zh^+}$.

\begin{lem}  \label{L-bar_equivariance}
The operator
$$
\bar L^{(n)}: (\varpi_2^l, \Zh) \otimes (\varpi_2^r, \Zh) \to
\bigl(\pi^0_l \otimes \pi^0_r, \widetilde{\Zh^+ \otimes \Zh^+} \bigr)
$$
is $U(2) \times U(2)$ and $\mathfrak{gl}(2,\HC)$-equivariant.
\end{lem}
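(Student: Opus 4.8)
The plan is to verify the single intertwining identity
\[
\bar L^{(n)}\bigl((\varpi_2^l(h)f_1)\otimes(\varpi_2^r(h)f_2)\bigr)
= (\pi^0_l\otimes\pi^0_r)(h)\bigl(\bar L^{(n)}(f_1\otimes f_2)\bigr)
\]
for $h$ ranging over a neighborhood of the identity in $GL(2,\HC)$. The $\mathfrak{gl}(2,\HC)$-equivariance then follows by differentiating this identity at $h=e$, while the $U(2)\times U(2)$-equivariance follows because the set of $h$ satisfying the identity is a subgroup containing a neighborhood of the identity, hence all of the connected group $U(2)\times U(2)$. The two ingredients driving the computation are the conformal covariance of the kernel (Lemma \ref{conformal}) and the Jacobian formula for fractional linear transformations: writing $\zeta=(aZ+b)(cZ+d)^{-1}$ with $h^{-1}=\bigl(\begin{smallmatrix} a & b \\ c & d \end{smallmatrix}\bigr)$ and $h=\bigl(\begin{smallmatrix} a' & b' \\ c' & d' \end{smallmatrix}\bigr)$, one has $dV_Z = N(cZ+d)^2\,N(a'-Zc')^2\,dV_\zeta$, which can be read off from the computations in \cite{FL1}.

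For the core calculation I would, inside $\bar L^{(n)}\bigl((\varpi_2^l(h)f_1)\otimes(\varpi_2^r(h)f_2)\bigr)(W_1,W_2)$, substitute $\zeta_i=(aZ_i+b)(cZ_i+d)^{-1}$ so that $f_1,f_2$ become evaluated simply at $\zeta_1,\zeta_2$. The automorphy factors $N(cZ_1+d)^{-2}N(a'-Z_1c')^{-1}$ and $N(cZ_2+d)^{-1}N(a'-Z_2c')^{-2}$ produced by $\varpi_2^l$ and $\varpi_2^r$, multiplied by the Jacobian $N(cZ_i+d)^2 N(a'-Z_ic')^2$, collapse to the single residual factor $N(a'-Z_1c')\cdot N(cZ_2+d)$. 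Applying Lemma \ref{conformal} with $\tilde Z_i=\zeta_i$ and $\tilde W_i=h^{-1}\!\cdot W_i$ then rewrites $N(a'-Z_1c')\,N(cZ_2+d)\,l^{(n)}(Z_1,Z_2;W_1,W_2)$ as $l^{(n)}(\zeta_1,\zeta_2;\tilde W_1,\tilde W_2)\big/\bigl(N(cW_1+d)\,N(a'-W_2c')\bigr)$. Pulling the $W$-dependent prefactor outside the integral reproduces $(\pi^0_l\otimes\pi^0_r)(h)\bigl(\bar L^{(n)}(f_1\otimes f_2)\bigr)(W_1,W_2)$ exactly, except that the cycles of integration have been transported from $U(2)_{R_i}$ to $h^{-1}\!\cdot U(2)_{R_i}$.

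The remaining, and genuinely delicate, step is to deform $h^{-1}\!\cdot U(2)_{R_i}$ back to $U(2)_{R_i}$ without altering the integral, and I expect this to be the main obstacle, since it is exactly here that the relative position of cycles emphasized in the introduction enters. I would argue that, for fixed admissible $W_1,W_2$, the integrand is holomorphic in each $\zeta_i$ throughout $\BB D^-_{r_{\text{max},i}}$: the kernel $l^{(n)}(\zeta_1,\zeta_2;\tilde W_1,\tilde W_2)$ is holomorphic there because all internal cycles sit at radii $r_k\le r_{\text{max},i}<R_i$, and $f_i\in\Zh$ is holomorphic because $N(\zeta_i)\neq0$ on $\BB D^-_{r_{\text{max},i}}$. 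Since $U(2)_{R_i}\subset\BB D^-_{r_{\text{max},i}}$ and, for $h$ close to the identity, $h^{-1}\!\cdot U(2)_{R_i}$ is a small perturbation lying in the same domain, the two cycles are homologous there; as the integrand wedged with $dV$ is a closed holomorphic top form, Stokes' theorem yields equality of the integrals, establishing the intertwining identity near the identity and hence (by differentiation) the $\mathfrak{gl}(2,\HC)$-equivariance. For $h\in U(2)\times U(2)$ the obstacle evaporates, as such $h$ preserve every $U(2)_R$ and the cycles do not move at all; the identity then holds on a neighborhood of the identity directly and extends to all of the connected group $U(2)\times U(2)$ by the subgroup argument.
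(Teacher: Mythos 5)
Your core computation is the same as the paper's: combine the conformal covariance of the kernel (Lemma \ref{conformal}) with the Jacobian formula for fractional linear transformations (Lemma 61 of \cite{FL1}) so that the $\varpi_2^l\otimes\varpi_2^r$ automorphy factors and the Jacobian collapse to exactly the factor needed to convert $l^{(n)}(Z_1,Z_2;W_1,W_2)$ into $l^{(n)}(\tilde Z_1,\tilde Z_2;\tilde W_1,\tilde W_2)$ divided by the $\pi^0_l\otimes\pi^0_r$ automorphy factor in $W_1,W_2$. Where you genuinely diverge is in handling the cycles. The paper first uses the independence of the integral from the radii to set $R_1=R_2=R$, and then restricts $h$ to the real form $U(2,2)_R$, which \emph{preserves} $U(2)_R$; no contour ever moves, the $U(2)\times U(2)$-case is included since $U(2)\times U(2)\subset U(2,2)_R$, and the full $\mathfrak{gl}(2,\HC)$-equivariance is recovered at the end from $\mathfrak{gl}(2,\HC)\simeq\BB C\otimes\mathfrak{u}(2,2)_R$. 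You instead take general $h\in GL(2,\HC)$ near the identity, accept that the cycles get transported to $h^{-1}\cdot U(2)_{R_i}$, and deform them back by a Stokes argument using holomorphy of the integrand on $\BB D^-_{r_{\text{max},i}}$ (which does hold: every solid edge at $Z_i$ joins it to a $T_k$ with $T_k\prec Z_i$, so $N(\zeta_i-T_k)\neq0$ there, and $N(\zeta_i)\neq0$ on $\BB D^-_{r}$ so $f_i\in\Zh$ is regular). Both routes are valid; the paper's buys you freedom from any contour deformation (which is exactly the delicate point in the Minkowski setting the paper keeps warning about), while yours makes explicit the holomorphy domain in the $Z$-variables and directly yields the identity for a full complex neighborhood of the identity. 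One caution about your phrasing: the set of $h\in GL(2,\HC)$ satisfying the intertwining identity is not literally a subgroup, because $\varpi_2^*(h)$ does not preserve $\Zh$ for general $h$ (the pullback of a polynomial under a fractional linear map is not in $\BB C[z_{ij},N(Z)^{-1}]$); this is harmless as you apply the subgroup/connectedness argument only inside $U(2)\times U(2)$, where the action is linear and the cycles are genuinely fixed, and elsewhere you only differentiate at the identity.
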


\begin{proof}
Note that the integrand in (\ref{L-bar}) is a holomorphic form of highest
degree, hence independent of the choice of $R_1$ and $R_2$
as long as $R_1 > r_{\text{max},1}$ and $R_2 > r_{\text{max},2}$.
Thus, without loss of generality we may assume that $R_1=R_2=R$ for
some $R > r_{\text{max},1}, r_{\text{max},2}$.
Recall from Subsection \ref{subsection_2.1} that the group $U(2,2)_R$ is a
conjugate of $U(2,2)$, which is a real form of $GL(2,\HC)$ preserving $U(2)_R$,
$\BB D_R^+$ and $\BB D_R^-$.
We need to show that, for all $h \in U(2,2)_R$, the operator $\bar L^{(n)}$
defined by (\ref{L-bar}) commutes with the action of $h$. Writing
$h= \bigl(\begin{smallmatrix} a' & b' \\ c' & d' \end{smallmatrix}\bigr)$,
$h^{-1}= \bigl(\begin{smallmatrix} a & b \\ c & d \end{smallmatrix}\bigr)$,
$$
\tilde Z_i = (aZ_i+b)(cZ_i+d)^{-1}, \qquad
\tilde W_i = (aW_i+b)(cW_i+d)^{-1}, \qquad i=1,2,
$$
and using Lemma \ref{conformal} together with Lemma 61 from \cite{FL1},
we obtain:
\begin{multline*}
\iint_{\genfrac{}{}{0pt}{}{Z_1 \in U(2)_R}{Z_2 \in U(2)_R}}
l^{(n)}(Z_1,Z_2;W_1,W_2) \cdot
\bigl((\varpi_2^l\otimes \varpi_2^r)(h)F\bigr)(Z_1,Z_2) \,dV_1 dV_2  \\
= \iint_{\genfrac{}{}{0pt}{}{Z_1 \in U(2)_R}{Z_2 \in U(2)_R}}
\frac{l^{(n)}(Z_1,Z_2;W_1,W_2) \cdot F(\tilde Z_1,\tilde Z_2)}
{N(cZ_1+d)^2 \cdot N(a'-Z_1c') \cdot N(cZ_2+d) \cdot N(a'-Z_2c')^2}\,dV_1 dV_2 \\
= \iint_{\genfrac{}{}{0pt}{}{Z_1 \in U(2)_R}{Z_2 \in U(2)_R}}
\frac{N(cW_1+d)^{-1} \cdot N(a'-W_2c')^{-1} \cdot
l^{(n)}(\tilde Z_1,\tilde Z_2;\tilde W_1,\tilde W_2) \cdot
F(\tilde Z_1,\tilde Z_2)} {N(cZ_1+d)^2 \cdot N(a'-Z_1c')^2 \cdot N(cZ_2+d)^2
\cdot N(a'-Z_2c')^2} \,dV_1 dV_2 \\
= \frac1{N(cW_1+d) \cdot N(a'-W_2c')}
\iint_{\genfrac{}{}{0pt}{}{\tilde Z_1 \in U(2)_R}{\tilde Z_2 \in U(2)_R}}
l^{(n)}(\tilde Z_1,\tilde Z_2;\tilde W_1,\tilde W_2) \cdot
F(\tilde Z_1,\tilde Z_2) \,dV_1 dV_2,
\end{multline*}
where $F(Z_1,Z_2) \in \Zh \otimes \Zh$.
This proves the $U(2,2)_R$ and $U(2) \times U(2)$-equivariance.
The $\mathfrak{gl}(2,\HC)$-equivariance then follows since
$\mathfrak{gl}(2,\HC) \simeq \BB C \otimes \mathfrak{u}(2,2)_R$.
\end{proof}

\begin{lem}  \label{I^--annihilated-lem}
  The operator $\bar L^{(n)}$ annihilates $I_2^- \otimes \Zh$ and
  $\Zh \otimes I_2^-$.
\end{lem}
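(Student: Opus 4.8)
The plan is to prove the vanishing by expanding the kernel $l^{(n)}$ in the integration variable $Z_1$ and then invoking the orthogonality relations (\ref{orthogonality}). First I would use bilinearity of $\bar L^{(n)}$ to reduce to the case where $f_1 = N(Z_1)^{k_0} t^{l_0}_{n_0\underline{m_0}}(Z_1)$ is a basis vector of $I_2^-$, so $k_0 \le -2$, paired with an arbitrary $f_2 \in \Zh$. Since the $Z_2$-integration against $f_2$ factors out, it is enough to show that the inner pairing in $Z_1$,
$$
\Bigl\langle f_1, \, l^{(n)}(\,\cdot\,, Z_2; W_1, W_2) \Bigr\rangle_{Z_1}
= \frac i{2\pi^3} \int_{Z_1 \in U(2)_{R_1}} f_1(Z_1) \cdot l^{(n)}(Z_1, Z_2; W_1, W_2)\, dV_1,
$$
vanishes for all values of $Z_2, W_1, W_2$; the case $\Zh \otimes I_2^-$ is then identical with the roles of $Z_1$ and $Z_2$ interchanged.

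The crucial step will be the structural claim that, as a function of $Z_1$ on the contour $U(2)_{R_1}$, the kernel $l^{(n)}$ equals $N(Z_1)^{-1}$ times a function holomorphic in $Z_1^{-1}$ near $Z_1^{-1}=0$. Indeed, $Z_1$ enters the integrand of $l^{(n)}$ only through the edge factors $N(Z_1 - Y)^{\epsilon(Y)}$, one for each vertex $Y$ adjacent to $Z_1$, with $\epsilon(Y)=-1$ for solid and $\epsilon(Y)=+1$ for dashed edges. Writing $N(Z_1 - Y) = N(Z_1)\cdot N(1 - Z_1^{-1}Y)$ and using (\ref{1edge}), which states that the number of solid edges at $Z_1$ exceeds the number of dashed ones by exactly one, the $Z_1$-dependent part factors as
$$
\prod_{Y \text{ adjacent to } Z_1} N(Z_1 - Y)^{\epsilon(Y)}
= N(Z_1)^{-1} \prod_{Y} N(1 - Z_1^{-1} Y)^{\epsilon(Y)},
$$
and the second factor is holomorphic in $Z_1^{-1}$ near the origin because, by the choice of contours (\ref{r_max})--(\ref{r_min}), every neighbor $Y$ satisfies $\|Z_1^{-1}Y\|<1$ on $U(2)_{R_1}$. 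Integrating out the internal variables $T_j$ over their compact contours preserves holomorphy in $Z_1^{-1}$, so that $l^{(n)}(Z_1, Z_2; W_1, W_2) = N(Z_1)^{-1} H(Z_1^{-1})$ with $H$ holomorphic near $0$; this is the several-factor analogue of the expansion (\ref{1/N-expansion}).

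Expanding $H$ in the polynomial basis of $\Zh^+$ in the variable $Z_1^{-1}$ then gives
$$
l^{(n)}(Z_1, Z_2; W_1, W_2)
= \sum_{k \ge 0,\, l, m, n} e_{klmn}(Z_2, W_1, W_2)\,
N(Z_1)^{-1-k}\, t^{l}_{m\underline{n}}(Z_1^{-1}),
$$
so every term has the inverted form $N(Z_1)^{-k''-2} t^{l}_{m\underline n}(Z_1^{-1})$ with $k''=k-1\ge -1$. By the orthogonality relation (\ref{orthogonality}), the basis vector $f_1$ with first index $k_0$ pairs nontrivially only with an inverted vector whose second index equals $k_0$; but $k_0 \le -2$, whereas only $k''\ge -1$ occurs above, so the pairing vanishes term by term. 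Hence $\bar L^{(n)}(f_1\otimes f_2)=0$ for every basis vector of $I_2^-$ and every $f_2$, giving $\bar L^{(n)}(I_2^- \otimes \Zh)=0$, and symmetrically $\bar L^{(n)}(\Zh \otimes I_2^-)=0$.

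The step I expect to require the most care is the verification that every neighbor $Y$ of $Z_1$ satisfies $\|Z_1^{-1}Y\|<1$ on the contour $U(2)_{R_1}$ — that the solid neighbors are internal vertices $T_j\prec Z_1$ (hence of radius $r_j\le r_{\text{max},1}<R_1$) and that any remaining neighbors lie among $W_1,W_2$, which sit in the small domains $\BB D^+_{r_{\text{min}}}$. This, together with the uniform convergence needed to interchange the $Z_1^{-1}$-expansion with the $T_j$-integrations, I would establish by induction on the number of attached slingshots using the contour prescription (\ref{r_max})--(\ref{r_min}); it is precisely the point where the correct relative position of the cycles is used.
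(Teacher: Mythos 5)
Your proof is correct and follows essentially the same route as the paper's: both exploit (\ref{1edge}) to see, via the expansion (\ref{1/N-expansion}), that the $Z_1$-expansion of the kernel contains only terms $N(Z_1)^{-k-2}\,t^l_{m\underline{n}}(Z_1^{-1})$ with $k\ge -1$, and then conclude by the orthogonality relations (\ref{orthogonality}). The only (minor) difference is that the paper first invokes $\mathfrak{gl}(2,\HC)$-equivariance to reduce to the single generator $N(Z_1)^{-2}$ of $I_2^-$, whereas you test against every basis vector $N(Z_1)^{k_0}t^{l_0}_{n_0\underline{m_0}}(Z_1)$ with $k_0\le -2$ directly, which is an equally valid variant.
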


\begin{proof}
  Since $N(Z)^{-2}$ generates $I_2^-$, the families of functions
  \begin{equation}  \label{2-families-gen}
  \{ N(Z_1)^{-2} \otimes f_2(Z_2) ;\: f_2 \in \Zh \}
  \quad \text{and} \quad
  \{ f_1(Z_1) \otimes N(Z_2)^{-2} ;\: f_1 \in \Zh \}
  \end{equation}
  generate $I_2^- \otimes \Zh$ and  $\Zh \otimes I_2^-$ respectively as
  representations of $\mathfrak{gl}(2,\HC)$.
  Hence it is sufficient to show that $\bar L^{(n)}$ annihilates these
  generators.

  Consider the vertex of the diagram labeled $Z_1$.
  Suppose that there are $k$ solid edges at $Z_1$, then, by (\ref{1edge}),
  there are $k-1$ dashed edges at $Z_1$.
  When we evaluate $\bar L^{(n)} (N(Z_1)^{-2} \otimes f_2)(W_1,W_2)$,
  we can perform integration over $Z_1$ first. This means, when we
  integrate
  $$
  \frac{i}{2\pi^3} \int_{Z_1 \in U(2)_{R_1}}
  l^{(n)}(Z_1,Z_2;W_1,W_2) \cdot N(Z_1)^{-2} \cdot f_2(Z_2) \,dV_1,
  $$
  we are really integrating  
  \begin{equation}  \label{L-dot-Z_1-integral}
  \frac{i}{2\pi^3} \int_{Z_1 \in U(2)_{R_1}}
  \frac{N(Z_1-Y_1) \cdot N(Z_1-Y_2) \cdot \ldots \cdot N(Z_1-Y_{k-1})}
       {N(Z_1-Y_k) \cdot N(Z_1-Y_{k+1}) \cdot \ldots \cdot N(Z_1-Y_{2k-1})}
       \cdot N(Z_1)^{-2} \, dV_1,
  \end{equation}
  where $Y_1, \dots, Y_{2k-1}$ are some labels of the diagram
  ($T_j$'s, $Z_2$, $W_1$ or $W_2$), possibly with repetitions.

Recall the matrix coefficient expansion (\ref{1/N-expansion}):
$$
\frac 1{N(Z_1-Y_j)}= N(Z_1)^{-1} \cdot \sum_{l,m,n}
t^l_{n\,\underline{m}}(Y_j) \cdot t^l_{m\,\underline{n}}(Z_1^{-1}),
\qquad \begin{matrix} j=k,\dots,2k-1, \quad l=0,\frac 12, 1, \frac 32,\dots, \\
m,n = -l, -l+1, \dots, l. \end{matrix}
$$
Then
$$
\frac1{N(Z_1-Y_k) \cdot N(Z_1-Y_{k+1}) \cdot \ldots \cdot N(Z_1-Y_{2k-1})} =
\frac1{N(Z_1)^k} + \text{ lower degree terms in $Z_1$}.
$$
On the other hand,
$$
N(Z_1-Y_1) \cdot N(Z_1-Y_2) \cdot \ldots \cdot N(Z_1-Y_{k-1})
= N(Z_1)^{k-1} + \text{ lower degree terms in $Z_1$}.
$$
Hence
$$
\frac{N(Z_1-Y_1) \cdot N(Z_1-Y_2) \cdot \ldots \cdot N(Z_1-Y_{k-1})}
{N(Z_1-Y_k) \cdot N(Z_1-Y_{k+1}) \cdot \ldots \cdot N(Z_1-Y_{2k-1})}
= \frac1{N(Z_1)} + \text{ lower degree terms in $Z_1$}.
$$
Comparing this with the orthogonality relations (\ref{orthogonality}),
we see that the integral (\ref{L-dot-Z_1-integral}) is zero.
Thus $\bar L^{(n)}$ annihilates the first family of generators in
(\ref{2-families-gen}) and hence $I_2^- \otimes \Zh$.
The case of $\Zh \otimes I_2^-$ is similar.
\end{proof}

\begin{prop}  \label{polynomial-prop}
For each $f_1, f_2 \in \Zh$, the function
$\bar L^{(n)} (f_1 \otimes f_2)(W_1,W_2)$ is polynomial, hence an element of
$\Zh^+ \otimes \Zh^+$. Moreover, the operator
$$
\bar L^{(n)}: (\varpi_2^l, \Zh) \otimes (\varpi_2^r, \Zh)
\to (\pi^0_l, \Zh^+) \otimes (\pi^0_r, \Zh^+)
$$
is $U(2) \times U(2)$ and $\mathfrak{gl}(2,\HC)$-equivariant.
\end{prop}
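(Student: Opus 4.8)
The plan is to take the $U(2)\times U(2)$- and $\mathfrak{gl}(2,\HC)$-equivariance essentially for granted from Lemma~\ref{L-bar_equivariance} and to put all the work into the single assertion that $\bar L^{(n)}(f_1\otimes f_2)$ is polynomial. The tool I would use is the grading of $\Zh$ by degree of homogeneity combined with equivariance under the Euler (dilation) element of $\mathfrak{gl}(2,\HC)$.

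First I would reduce to homogeneous inputs. The monomials $f_1=N(Z)^{k_1}t^{l_1}_{n_1\underline{m_1}}(Z)$ and $f_2=N(Z)^{k_2}t^{l_2}_{n_2\underline{m_2}}(Z)$ span $\Zh$, so by linearity it is enough to treat such $f_1,f_2$. Then I would compute the action of the diagonal one-parameter subgroup $h_t=\bigl(\begin{smallmatrix} e^{t} & 0 \\ 0 & e^{-t}\end{smallmatrix}\bigr)$, which acts on $\HC$ by the dilation $Z\mapsto e^{-2t}Z$. Substituting directly into the defining formulas, one finds that both $\varpi_2^l(h_t)$ and $\varpi_2^r(h_t)$ multiply $N(Z)^{k}t^{l}_{n\underline m}(Z)$ by $e^{-2t(2k+2l+3)}$, whereas both $\pi^0_l(h_t)$ and $\pi^0_r(h_t)$ multiply $N(W)^{k'}t^{l'}_{n'\underline{m'}}(W)$ by $e^{-2t(2k'+2l'+1)}$; equivalently, the Euler generator acts as $-2(\degt+2)$ on the source $\Zh$ and as $-2\,\degt$ on the target.

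Next I would apply the $\mathfrak{gl}(2,\HC)$-equivariance of Lemma~\ref{L-bar_equivariance} to this Euler element. Since $f_1\otimes f_2$ is an eigenvector, so is $F:=\bar L^{(n)}(f_1\otimes f_2)$, and equating the two eigenvalues forces the homogeneity equation $(\deg_{W_1}+\deg_{W_2})F=D\,F$ with $D=2(k_1+l_1)+2(k_2+l_2)+4$, i.e. $F$ is homogeneous of total degree $D$ in the pair $(W_1,W_2)$. On the other hand, Lemma~\ref{L-bar_equivariance} already places $F$ in $\widetilde{\Zh^+\otimes\Zh^+}$, i.e. $F$ is holomorphic on the connected domain $\BB D^+_{r_{\text{min},1}}\times\BB D^+_{r_{\text{min},2}}$, which contains the origin. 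A function holomorphic near the origin and homogeneous of integer degree $D$ is exactly a homogeneous polynomial of degree $D$ (and is $0$ when $D<0$): expanding the Taylor series at the origin, the homogeneity equation kills every homogeneous Taylor component except the one of degree $D$, and the identity theorem propagates the resulting polynomial over the whole connected domain. Hence $F$ is polynomial, so $F\in\Zh^+\otimes\Zh^+$.

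With polynomiality in hand, the equivariance claim is immediate: the $(\pi^0_l\otimes\pi^0_r)$- and $U(2)\times U(2)$-actions on $\widetilde{\Zh^+\otimes\Zh^+}$ preserve the polynomial subspace $\Zh^+\otimes\Zh^+$, so the equivariance already proved in Lemma~\ref{L-bar_equivariance} restricts to the corestricted operator $\bar L^{(n)}\colon(\varpi_2^l,\Zh)\otimes(\varpi_2^r,\Zh)\to(\pi^0_l,\Zh^+)\otimes(\pi^0_r,\Zh^+)$. I expect the main (though modest) obstacle to be the bookkeeping of the degree shift: one must verify that the Euler generator acts semisimply with the stated eigenvalues on both source and target, so that equivariance confines $F$ to a single finite-dimensional degree-eigenspace. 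Once that is established, holomorphy at the origin does the rest, and no further estimate on the diagram is needed.
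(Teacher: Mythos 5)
Your proof is correct, but it takes a genuinely different route from the paper's. The paper proves polynomiality by induction on the number of loops $n$: the base case $n=1$ is an explicit computation with the matrix coefficient expansion (\ref{1/N-expansion}) and the orthogonality relations (\ref{orthogonality}), and the inductive step peels off the last slingshot, integrates out one $Z$-variable to reduce to $\bar L^{(n-1)}$, and invokes Lemma \ref{I^--annihilated-lem} (annihilation of $I_2^-\otimes\Zh$ and $\Zh\otimes I_2^-$) to see that only finitely many terms of the resulting expansion survive. You instead use only Lemma \ref{L-bar_equivariance} together with the Euler element of $\mathfrak{gl}(2,\HC)$: equivariance forces $F=\bar L^{(n)}(f_1\otimes f_2)$ to satisfy $(\deg_{W_1}+\deg_{W_2})F=DF$ with the integer $D=\deg f_1+\deg f_2+4$, and a function analytic on the connected neighborhood $\BB D^+_{r_{\text{min},1}}\times\BB D^+_{r_{\text{min},2}}$ of the origin satisfying such an equation is a homogeneous polynomial of degree $D$ (or zero if $D<0$). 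Your eigenvalue bookkeeping checks out ($-2(\degt+2)$ on each source factor, $-2\,\degt$ on each target factor), and the one point requiring care --- that $\bigl(\begin{smallmatrix} e^{t} & 0 \\ 0 & e^{-t}\end{smallmatrix}\bigr)$ does not lie in $U(2,2)_R$ for real $t\neq 0$, so one must pass to the Lie-algebra form of the equivariance (or restrict to the compact torus $t\in i\BB R\subset U(2)\times U(2)$) --- you handle by working with the generator rather than the group element. Your route is shorter, bypasses the induction and Lemma \ref{I^--annihilated-lem} entirely, and yields for free the degree shift by four recorded in the Remark following the Proposition; what it does not produce is the explicit matrix-coefficient expansion of $\bar L^{(1)}$ that falls out of the paper's base case, and note that Lemma \ref{I^--annihilated-lem} is still needed later in the paper regardless, so the overall economy of the two approaches within the article as a whole is closer than it appears.
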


\begin{proof}
Note that the equivariance part is established in
Lemma \ref{L-bar_equivariance}.
Thus, we only need to prove that $\bar L^{(n)} (f_1 \otimes f_2)$ lies in
$\Zh^+ \otimes \Zh^+$.
We do it by induction on $n$ -- the number of solid vertices (or loops)
in the diagram. If $n=1$,
$$
l^{(1)}(Z_1,Z_2;W_1,W_2) =
\frac i{2\pi^3} \int_{T \in U(2)_r}
\frac{dV}{N(Z_1-T) \cdot N(Z_2-T) \cdot N(W_1-T) \cdot N(W_2-T)}.
$$
Recall that $\Zh$ is spanned by elements of the form
$N(Z)^k \cdot t^l_{n\,\underline{m}}(Z)$. So, without loss of generality we
may assume that
$$
f(Z_1) = N(Z_1)^{k_1} \cdot t^{l_1}_{n_1\,\underline{m_1}}(Z_1), \qquad
f(Z_2) = N(Z_2)^{k_2} \cdot t^{l_2}_{n_2\,\underline{m_2}}(Z_2).
$$
When computing $\bar L^{(1)} (f_1 \otimes f_2)(W_1,W_2)$,
we can switch the order of integration -- integrate out $Z_1$, $Z_2$ first
and $T$ later. Using the matrix coefficient expansion (\ref{1/N-expansion}):
$$
\frac 1{N(Z_i-T)}= N(Z_i)^{-1} \cdot \sum_{l,m,n}
t^l_{m\,\underline{n}}(Z_i^{-1}) \cdot t^l_{n\,\underline{m}}(T),
\qquad \begin{matrix} i=1,2, \quad l=0,\frac 12, 1, \frac 32,\dots, \\
m,n = -l, -l+1, \dots, l, \end{matrix}
$$
and orthogonality relations (\ref{orthogonality}),
\begin{multline*}
\Bigl(\frac{i}{2\pi^3}\Bigr)^2
\iint_{\genfrac{}{}{0pt}{}{Z_1 \in U(2)_{R_1}}{Z_2 \in U(2)_{R_2}}}
\frac{f_1(Z_1) \cdot f_2(Z_2) \,dV_1 dV_2}
{N(Z_1-T) \cdot N(Z_2-T) \cdot N(W_1-T) \cdot N(W_2-T)}  \\
= \frac{t^{l_1}_{n_1\,\underline{m_1}}(T) \cdot t^{l_2}_{n_2\,\underline{m_2}}(T)}
{N(W_1-T) \cdot N(W_2-T)} \qquad \text{if $k_1=k_2=-1$}
\end{multline*}
and zero otherwise.
When we integrate out the $T$ variable, we use the matrix coefficient
expansion (\ref{1/N-expansion}) again:
$$
\frac 1{N(W_i-T)}= N(T)^{-1} \cdot \sum_{l,m,n}
t^l_{m\,\underline{n}}(T^{-1}) \cdot t^l_{n\,\underline{m}}(W_i),
\qquad \begin{matrix} i=1,2, \quad l=0,\frac 12, 1, \frac 32,\dots, \\
m,n = -l, -l+1, \dots, l, \end{matrix}
$$
and obtain:
\begin{multline*}
\bar L^{(1)} (f_1 \otimes f_2)(W_1,W_2)
= \sum_{\genfrac{}{}{0pt}{}{l,m,n}{l',m',n'}}
t^l_{n\,\underline{m}}(W_1) \cdot t^{l'}_{n'\,\underline{m'}}(W_2) \\
\times \frac i{2\pi^3} \int_{T \in U(2)_r}
t^{l_1}_{n_1\,\underline{m_1}}(T) \cdot t^{l_2}_{n_2\,\underline{m_2}}(T)
\cdot N(T)^{-2} \cdot t^l_{m\,\underline{n}}(T^{-1}) \cdot
t^{l'}_{m'\,\underline{n'}}(T^{-1}) \,dV.
\end{multline*}
By the orthogonality conditions (\ref{orthogonality}), these integrals
can be non-zero only when the degree of
$t^l_{m\,\underline{n}}(T^{-1}) \cdot t^{l'}_{m'\,\underline{n'}}(T^{-1})$
is negative that of
$t^{l_1}_{n_1\,\underline{m_1}}(T) \cdot t^{l_2}_{n_2\,\underline{m_2}}(T)$,
in other words, $l+l'=l_1+l_2$.
Thus, there are only finitely many non-zero terms and
$$
\bar L^{(1)} (f_1 \otimes f_2)(W_1,W_2) \in \Zh^+ \otimes \Zh^+.
$$

Now we prove the inductive step.
For concreteness, let us assume that the last slingshot is attached to an
$(n-1)$-loop box diagram $d^{(n-1)}$ so that $Z_1$ becomes a solid vertex and
gets relabeled as $T_n$ (the other cases are similar). Then
$$
l^{(n)}(Z_1,Z_2;W_1,W_2) = \frac{i}{2\pi^3} \int_{T_n \in U(2)_{r_n}}
\frac{N(Z_2-W_2) \cdot l^{(n-1)}(T_n,Z_2;W_1,W_2)}
{N(Z_1-T_n) \cdot N(Z_2-T_n) \cdot N(W_2-T_n)} \,dV_{T_n},
$$
where $l^{(n-1)}(Z_1,Z_2;W_1,W_2)$ is the conformal four-point integral
corresponding to the $(n-1)$-loop diagram $d^{(n-1)}$.
By induction, we assume that the result holds for $\bar L^{n-1}$:
$$
\bar L^{(n-1)}: \Zh \otimes \Zh \to \Zh^+ \otimes \Zh^+.
$$
We want to integrate out $Z_1$ first and reduce the integral to $\bar L^{(n-1)}$.
By the matrix coefficient expansion (\ref{1/N-expansion}) and
orthogonality conditions (\ref{orthogonality}),
$$
g_1(T_n) =
\frac{i}{2\pi^3} \int_{Z_1 \in U(2)_{R_1}} \frac{f_1(Z_1)\,dV_1}{N(Z_1-T_n)}
$$
is a (harmonic) polynomial in $T_n$, hence an element of $\Zh^+$.
Using the expansion (\ref{1/N-expansion}) again,
$$
\frac1{N(Z_2-T_n) \cdot N(W_2-T_n)} =
\frac1{N(Z_2) \cdot N(T_n)} \sum_{\genfrac{}{}{0pt}{}{l,m,n}{l',m',n'}}
t^l_{n\,\underline{m}}(T_n) \cdot t^{l'}_{n'\,\underline{m'}}(W_2)
\cdot t^l_{m\,\underline{n}}(Z_2^{-1}) \cdot t^{l'}_{m'\,\underline{n'}}(T_n^{-1}).
$$
Relabeling $T_n$ as $Z_1$, we obtain:
\begin{multline*}
\bar L^{(n)} (f_1 \otimes f_2)(W_1,W_2)
= \sum_{\genfrac{}{}{0pt}{}{l,m,n}{l',m',n'}} t^{l'}_{n'\,\underline{m'}}(W_2)  \\
\times
\bar L^{(n-1)} \left( g_1(Z_1) \cdot f_2(Z_2) \cdot
\frac{N(Z_2-W_2)}{N(Z_1) \cdot N(Z_2)} \cdot t^l_{n\,\underline{m}}(Z_1) \cdot
t^{l'}_{m'\,\underline{n'}}(Z_1^{-1}) \cdot t^l_{m\,\underline{n}}(Z_2^{-1}) \right).
\end{multline*}
By Lemma \ref{I^--annihilated-lem}, $\bar L^{(n-1)}$ annihilates
$\Zh \otimes I_2^-$ and, in particular, annihilates all those elements
\begin{equation}  \label{elements}
g_1(Z_1) \cdot f_2(Z_2) \cdot \frac{N(Z_2-W_2)}{N(Z_1) \cdot N(Z_2)} \cdot
t^l_{n\,\underline{m}}(Z_1) \cdot t^{l'}_{m'\,\underline{n'}}(Z_1^{-1}) \cdot
t^l_{m\,\underline{n}}(Z_2^{-1})
\end{equation}
that have $Z_2$-degree strictly less than $-2$.
Thus, only finitely many indices $l'$, $m'$ and $n'$ contribute
non-zero terms to the above sum.
By Lemma \ref{I^--annihilated-lem}, $\bar L^{(n-1)}$ also annihilates
$I_2^- \otimes \Zh$ and, in particular, annihilates all those elements
(\ref{elements}) that have $Z_1$-degree strictly less than $-2$.
Thus, the above sum has only finitely many non-zero terms, and,
by the induction hypothesis, each term lies in $\Zh^+ \otimes \Zh^+$.
We conclude that
$$
\bar L^{(n)} (f_1 \otimes f_2)(W_1,W_2) \in \Zh^+ \otimes \Zh^+.
$$
\end{proof}

\begin{rem}
It is also true that $\bar L^{(n)}$ increases the total degree of
$f_1 \otimes f_2$ by four. This can be shown by slightly adjusting
the proof of Proposition \ref{polynomial-prop}
\end{rem}

\subsection{Proof of Theorem \ref{magic}}

We can switch the roles of $Z$'s and $W$'s and obtain an operator similar
to $L^{(n)}$ that integrates out the $W_1$, $W_2$ variables:
\begin{multline*}
\Bigl(\frac{i}{2\pi^3}\Bigr)^{-2} \cdot
\acute{L}^{(n)} (\phi_1 \otimes \phi_2)(Z_1,Z_2) \\
= 
\iint_{\genfrac{}{}{0pt}{}{W_1 \in U(2)_{R'_1}}{W_2 \in U(2)_{R'_2}}}
l^{(n)}(Z_1,Z_2;W_1,W_2) \cdot (\degt_{W_1} \phi_1)(W_1)
\cdot (\degt_{W_2} \phi_2)(W_2) \,\frac{dV_1}{N(W_1)} \frac{dV_2}{N(W_2)},
\end{multline*}
where $\phi_1, \phi_2 \in {\cal H}^-$, $R'_1 < r_{\text{min},1}$,
$R'_2 < r_{\text{min},2}$, $Z_1 \in \BB D^-_{r_{\text{max},1}}$,
$Z_2 \in \BB D^-_{r_{\text{max},2}}$
(recall that $r_{\text{max},i}$ and $r_{\text{min},i}$ are defined in
(\ref{r_max}) and (\ref{r_min})).

\begin{prop}  \label{L'-prop}
For each $\phi_1, \phi_2 \in {\cal H}^-$, the function
$\acute{L}^{(n)} (\phi_1 \otimes \phi_2)(Z_1,Z_2)$ belongs to
${\cal H}^- \otimes {\cal H}^-$.
Moreover, the operator
$$
\acute{L}^{(n)}: (\pi^0_r, {\cal H}^-) \otimes (\pi^0_l, {\cal H}^-)
\to (\pi^0_r, {\cal H}^-) \otimes (\pi^0_l, {\cal H}^-)
$$
is $U(2) \times U(2)$ and $\mathfrak{gl}(2,\HC)$-equivariant.
\end{prop}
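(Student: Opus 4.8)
The plan is to deduce Proposition \ref{L'-prop} from the already-established properties of $L^{(n)}$ (Proposition \ref{Prop12}) by means of the inversion $Z \mapsto Z^{-1}$, which interchanges the domains $\BB D^+$ and $\BB D^-$ and, accordingly, the spaces ${\cal H}^+$ and ${\cal H}^-$ via $\phi(Z) \mapsto \tilde\phi(Z) = N(Z)^{-1}\phi(Z^{-1})$. The equivariance statement requires no new idea: it follows from the conformal property of $l^{(n)}$ (Lemma \ref{conformal}) by exactly the computation carried out in Lemma \ref{L-bar_equivariance}, now reading off the transformation factors $N(cW_1+d)$ and $N(a'-W_2c')$ attached to the $W$-variables instead of the $Z$-variables; these match the $\pi^0_l$ and $\pi^0_r$ actions and yield the stated $(\pi^0_r,{\cal H}^-)\otimes(\pi^0_l,{\cal H}^-)$ intertwining structure. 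Thus the substance of the proof is to show that $\acute{L}^{(n)}(\phi_1\otimes\phi_2)$ lands in ${\cal H}^-\otimes{\cal H}^-$.

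First I would record the inversion symmetry of the box integral itself. From the identity $N(X^{-1}-Y^{-1}) = N(X-Y)/\bigl(N(X)N(Y)\bigr)$, replacing every variable of the diagram $d^{(n)}$ (both the external $Z_i, W_i$ and the internal $T_k$) by its inverse turns each factor $N(Y_i-Y_j)^{\pm1}$ into the same factor times $N(Y_i)^{\mp1}N(Y_j)^{\mp1}$. Collecting these contributions vertex by vertex and using the edge-count identities (\ref{1edge}) and (\ref{4edges}), one finds a clean power of $N$ at each vertex: $+4$ at every internal vertex and $+1$ at every external vertex. The inversion-invariance of $dV/N(Z)^2$ shows that each integration measure $dV_{T_k}$ contributes a compensating $N(T_k)^{-4}$, so all internal $N$-factors cancel and the $T_k$-contours $U(2)_{r_k}$ go over to $U(2)_{1/r_k}$, with the partial order simply reversed. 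The outcome is that, up to the explicit factor $N(Z_1)N(Z_2)N(W_1)N(W_2)$, the integral $l^{(n)}(Z_1^{-1},Z_2^{-1};W_1^{-1},W_2^{-1})$ equals the box integral of the mirror diagram $d'$ obtained from $d^{(n)}$ by reversing the ordering (equivalently, swapping the roles of the $Z$'s and $W$'s).

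With this symmetry in hand I would perform the change of variables $W_i = V_i^{-1}$ in the definition of $\acute{L}^{(n)}$. The contours $U(2)_{R'_i}$ become $U(2)_{1/R'_i}$; the factor $dV/N(W_i)$ together with the transformed $(\degt\phi_i)(V_i^{-1})$ reassembles, using the inversion behaviour of $\degt$ and relation (\ref{inversion-symmetry}), into the data $(\degt\tilde\phi_i)(V_i)\,dV/N(V_i)$ with $\tilde\phi_i\in{\cal H}^+$. Combined with the inversion symmetry of $l^{(n)}$ from the previous step, this exhibits $\acute{L}^{(n)}(\phi_1\otimes\phi_2)(Z_1,Z_2)$ as $N(Z_1)^{-1}N(Z_2)^{-1}$ times the value at $(Z_1^{-1},Z_2^{-1})$ of the $L^{(n)}$-type operator attached to the mirror diagram $d'$, applied to $\tilde\phi_1\otimes\tilde\phi_2$. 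Since Proposition \ref{Prop12} holds for every box diagram (and is an input to, not a consequence of, the magic identities), that value lies in ${\cal H}^+\otimes{\cal H}^+$; hence $\acute{L}^{(n)}(\phi_1\otimes\phi_2)$, being $N(Z_i)^{-1}$ times a harmonic polynomial in $Z_i^{-1}$, lies in ${\cal H}^-\otimes{\cal H}^-$, as required.

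I expect the main obstacle to be the bookkeeping in the last paragraph: tracking how $\degt$ and the weight $1/N$ transform under $W_i\mapsto W_i^{-1}$ so that the inverted integrand genuinely reproduces an $L^{(n)}$-operator (rather than merely something of the right shape), and checking that the reversed-order contours $U(2)_{1/R'_i}$ and $U(2)_{1/r_k}$ are exactly the contours prescribed by (\ref{r_max})--(\ref{r_min}) for the mirror diagram $d'$. A safe alternative, avoiding the global inversion altogether, is to repeat the inductive argument of Proposition \ref{polynomial-prop} with the $Z$'s and $W$'s interchanged: the matrix-coefficient expansion (\ref{1/N-expansion}) and the orthogonality relations (\ref{orthogonality}) are symmetric enough that the same degree comparisons force the output into ${\cal H}^-\otimes{\cal H}^-$, with $\Zh^+$ replaced by $\Zh_2^-$ throughout.
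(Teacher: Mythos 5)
Your proposal is correct and follows essentially the same route as the paper: the paper also establishes the inversion identity $l^{(n)}(Z_1^{-1},Z_2^{-1};W_1^{-1},W_2^{-1}) = N(Z_1)N(Z_2)N(W_1)N(W_2)\cdot\tilde l^{(n)}(W_1,W_2;Z_1,Z_2)$ via the edge counts (\ref{1edge})--(\ref{4edges}) and Lemma 61 of \cite{FL1}, then applies $\bigl(\begin{smallmatrix} 0 & 1 \\ 1 & 0 \end{smallmatrix}\bigr)$ (your change of variables $W_i=V_i^{-1}$) together with (\ref{inversion-symmetry}) to identify the result with $L^{(n)}(\tilde\phi_1\otimes\tilde\phi_2)$ for the mirror diagram and invokes Proposition \ref{Prop12}. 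The equivariance is likewise handled exactly as you describe, by the argument of Lemma \ref{L-bar_equivariance}.
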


\begin{proof}
First, we show that
$\acute{L}^{(n)} (\phi_1 \otimes \phi_2) \in {\cal H}^- \otimes {\cal H}^-$.
Suppose that the conformal four-point integral $l^{(n)}(Z_1,Z_2;W_1,W_2)$
corresponds to a box diagram $d^{(n)}$.
We apply
$\bigl(\begin{smallmatrix} 0 & 1 \\ 1 & 0 \end{smallmatrix}\bigr) \in GL(2,\HC)$
in order to reduce the result to Proposition \ref{Prop12}
(Proposition 12 in \cite{L2}). Then
$$
\pi\bigl(\begin{smallmatrix} 0 & 1 \\ 1 & 0 \end{smallmatrix}\bigr)Z = Z^{-1},
\qquad 
\pi^0_l\bigl(\begin{smallmatrix} 0 & 1 \\ 1 & 0 \end{smallmatrix}\bigr) \phi(Z)
= \pi^0_r\bigl(\begin{smallmatrix} 0 & 1 \\ 1 & 0 \end{smallmatrix}\bigr)
\phi(Z) = N(Z)^{-1} \cdot \phi(Z^{-1})
$$
and, by Lemma 10 in \cite{FL1},
$$
N(Z^{-1}-W^{-1}) = N(Z)^{-1} \cdot N(Z-W) \cdot N(W)^{-1}.
$$
Note that the map $Z \mapsto Z^{-1}$ reverses the partial ordering
of vertices in the diagram $d^{(n)}$.
Combining observations (\ref{1edge})-(\ref{4edges}) with Lemma 61 in \cite{FL1},
we obtain
\begin{equation}  \label{l-inversion}
l^{(n)}(Z_1^{-1},Z_2^{-1};W_1^{-1},W_2^{-1})
= N(W_1) \cdot N(W_2) \cdot \tilde l^{(n)}(W_1,W_2;Z_1,Z_2)
\cdot N(Z_1) \cdot N(Z_2),
\end{equation}
where $\tilde l^{(n)}(Z_1,Z_2;W_1,W_2)$ is the conformal four-point integral
corresponding to the box diagram $\tilde d^{(n)}$ obtained from
$d^{(n)}$ by reversing the partial ordering.
Let
$$
\tilde\phi_i(Z_i) = N(Z_i)^{-1} \cdot \phi_i(Z_i^{-1}), \qquad i=1,2;
$$

Then, using (\ref{inversion-symmetry}),
\begin{multline*}
\Bigl(\frac{i}{2\pi^3}\Bigr)^{-2} \cdot (\pi^0_r \otimes \pi^0_l)
\bigl(\begin{smallmatrix} 0 & 1 \\ 1 & 0 \end{smallmatrix}\bigr)
\bigl( \acute{L}^{(n)} (\phi_1 \otimes \phi_2) \bigr) (Z_1,Z_2)  \\
= \iint_{\genfrac{}{}{0pt}{}{W_1 \in U(2)_{R'_1}}{W_2 \in U(2)_{R'_2}}}
\frac{l^{(n)}(Z_1^{-1},Z_2^{-1};W_1^{-1},W_2^{-1}) \cdot
(\degt_{W_1} \tilde\phi_1)(W_1) \cdot (\degt_{W_2} \tilde\phi_2)(W_2)}
{N(Z_1) \cdot N(Z_2) \cdot N(W_1) \cdot N(W_2)}
\,\frac{dV_1}{N(W_1)} \frac{dV_2}{N(W_2)}  \\
= \iint_{\genfrac{}{}{0pt}{}{W_1 \in U(2)_{R'_1}}{W_2 \in U(2)_{R'_2}}}
\tilde l^{(n)}(W_1,W_2;Z_1,Z_2) \cdot
(\degt_{W_1} \tilde\phi_1)(W_1) \cdot (\degt_{W_2} \tilde\phi_2)(W_2)
\,\frac{dV_1}{N(W_1)} \frac{dV_2}{N(W_2)}  \\
= \Bigl(\frac{i}{2\pi^3}\Bigr)^{-2} \cdot
L^{(n)} (\tilde\phi_1 \otimes \tilde\phi_2)(Z_1,Z_2).
\end{multline*}
By Proposition \ref{Prop12}, the last expression lies in
${\cal H}^+ \otimes {\cal H}^+$.
Therefore,
$\acute{L}^{(n)} (\phi_1 \otimes \phi_2) \in {\cal H}^- \otimes {\cal H}^-$.

The proof of the equivariance part is similar to that of
Lemma \ref{L-bar_equivariance}; it is based on Lemma \ref{conformal}
and invariance of the pairing (\ref{H-pairing2}).
\end{proof}

We have the following relation between operators $\bar L^{(n)}$ and
$\acute{L}^{(n)}$:
\begin{multline}  \label{L-dot-prime-relation}
\iint_{\genfrac{}{}{0pt}{}{W_1 \in U(2)_{R'_1}}{W_2 \in U(2)_{R'_2}}}
\bar L^{(n)} (f_1 \otimes f_2)(W_1,W_2)
\cdot (\degt_{W_1} \phi_1)(W_1) \cdot (\degt_{W_2} \phi_2)(W_2)
\, \frac{dV_{W_1}}{N(W_1)} \frac{dV_{W_2}}{N(W_2)}  \\
= \Bigl(\frac{i}{2\pi^3}\Bigr)^2
\iint_{\genfrac{}{}{0pt}{}{Z_1 \in U(2)_{R_1} ,\: Z_2 \in U(2)_{R_2}}
{W_1 \in U(2)_{R'_1} ,\: W_2 \in U(2)_{R'_2}}}
l^{(n)}(Z_1,Z_2;W_1,W_2) \cdot f_1(Z_1) \cdot f_2(Z_2) \\
\times (\degt_{W_1} \phi_1)(W_1) \cdot (\degt_{W_2} \phi_2)(W_2)
\, dV_{Z_1} dV_{Z_2} \frac{dV_{W_1}}{N(W_1)} \frac{dV_{W_2}}{N(W_2)}  \\
= \iint_{\genfrac{}{}{0pt}{}{Z_1 \in U(2)_{R_1}}{Z_2 \in U(2)_{R_2}}}
\acute{L}^{(n)} (\phi_1 \otimes \phi_2)(Z_1,Z_2) \cdot f_1(Z_1) \cdot f_2(Z_2)
\, dV_{Z_1} dV_{Z_2},
\end{multline}
for all $f_1, f_2 \in \Zh$, $\phi_1, \phi_2 \in {\cal H}^-$.
The radii should be chosen so that $R_1 > r_{\text{max},1}$,
$R_2 > r_{\text{max},2}$, $R'_1 < r_{\text{min},1}$ and $R'_2 < r_{\text{min},2}$,
as before.

\begin{cor} \label{harm-harm-cor}
  Suppose that $F(Z_1,Z_2) \in \Zh \otimes \Zh$ is such that
  $\bar L^{(n)}(F)$ is not zero and lies in
  ${\cal H}^+ \otimes {\cal H}^+$, then the projection of $F$ onto
  $\bigl( \Zh/(I_2^- \oplus \Zh^+) \bigr) \otimes
  \bigl( \Zh/(I_2^- \oplus \Zh^+) \bigr)$ is not zero.
\end{cor}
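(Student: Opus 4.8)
The plan is to prove the contrapositive, using the duality between $\bar L^{(n)}$ and the operator $\acute{L}^{(n)}$ that is encoded in relation (\ref{L-dot-prime-relation}). Denote by $P \colon \Zh \to \Zh/(I_2^- \oplus \Zh^+) \simeq {\cal H}^+$ the projection of Proposition \ref{quotient-prop}; since $I_2^-$ is spanned by the terms $N(Z)^k t^l_{n\,\underline{m}}(Z)$ with $k \le -2$ and $\Zh^+$ by those with $k \ge 0$, the map $P$ records exactly the $k=-1$ homogeneous component of its argument, matching the isomorphism (\ref{harm-iso}) $\phi \mapsto \degt\phi/N$. I will show that if $\bar L^{(n)}(F) \in {\cal H}^+ \otimes {\cal H}^+$ and $(P \otimes P)(F) = 0$, then $\bar L^{(n)}(F) = 0$; this is precisely the contrapositive of the statement.

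The first step is to record a non-degeneracy fact. The pairing (\ref{H-pairing2}) is non-degenerate on ${\cal H} = {\cal H}^+ \oplus {\cal H}^-$, and a short computation of the scaling weights of $\pi^0_l$ and $\pi^0_r$ (using that the dilation $\mathrm{diag}(tI,I)$ acts on a degree-$d$ vector by $t^{-d}$ in $\pi^0_l$ and by $t^{-d-2}$ in $\pi^0_r$) shows that $({\cal H}^\pm, {\cal H}^\pm) = 0$ and that a spin-$l$ vector of ${\cal H}^+$ pairs nontrivially only against the spin-$l$ part of ${\cal H}^-$. Hence (\ref{H-pairing2}) induces a \emph{perfect} pairing ${\cal H}^+ \times {\cal H}^- \to \BB C$, and therefore a perfect product pairing $({\cal H}^+ \otimes {\cal H}^+) \times ({\cal H}^- \otimes {\cal H}^-) \to \BB C$. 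Consequently, if $\bar L^{(n)}(F) \in {\cal H}^+ \otimes {\cal H}^+$ were nonzero, there would be $\phi_1, \phi_2 \in {\cal H}^-$ for which the left-hand side of (\ref{L-dot-prime-relation}) is nonzero, since that left-hand side equals, up to a nonzero constant, the value of this product pairing on $\phi_1 \otimes \phi_2$ against $\bar L^{(n)}(F)$.

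The second step identifies what the right-hand side of (\ref{L-dot-prime-relation}) can detect. By Proposition \ref{L'-prop} we have $\acute{L}^{(n)}(\phi_1 \otimes \phi_2) \in {\cal H}^- \otimes {\cal H}^-$, and the right-hand side is its pairing against $F$ via (\ref{pairing}) in each of the two $Z$-variables. Expanding in the basis $N(Z)^k t^l_{n\,\underline{m}}(Z)$ and invoking the orthogonality relations (\ref{orthogonality}), one sees that the pairing (\ref{pairing}) of any element of ${\cal H}^-$ against an element $f \in \Zh$ isolates precisely the $k=-1$ component of $f$, that is, $P(f)$. Thus the right-hand side of (\ref{L-dot-prime-relation}) depends on $F$ only through $(P \otimes P)(F)$. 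In particular, if $(P \otimes P)(F) = 0$, then the right-hand side vanishes for all $\phi_1, \phi_2 \in {\cal H}^-$, hence so does the left-hand side; by the perfect pairing of the previous step this forces $\bar L^{(n)}(F) = 0$, completing the contrapositive.

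The main obstacle is the bookkeeping in the second step: one must verify carefully, via (\ref{orthogonality}), that pairing against ${\cal H}^-$ through (\ref{pairing}) picks out exactly the $k=-1$ level $\{N(Z)^{-1} t^l_{n\,\underline{m}}(Z)\}$ and nothing coarser, so that the right-hand side genuinely factors through $P \otimes P$ and not through some larger quotient of $F$; this is what makes the hypothesis $\bar L^{(n)}(F) \in {\cal H}^+ \otimes {\cal H}^+$ (rather than merely $\bar L^{(n)}(F) \neq 0$ in $\Zh^+ \otimes \Zh^+$) indispensable, since only a harmonic output can be separated by the pairing (\ref{H-pairing2}) against ${\cal H}^- \otimes {\cal H}^-$.
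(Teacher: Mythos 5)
Your proposal is correct and follows essentially the same route as the paper's own proof: non-degeneracy of the pairing (\ref{H-pairing2}) produces $\phi_1,\phi_2 \in {\cal H}^-$ detecting the nonzero element $\bar L^{(n)}(F) \in {\cal H}^+ \otimes {\cal H}^+$, relation (\ref{L-dot-prime-relation}) converts this into a pairing of $\acute{L}^{(n)}(\phi_1 \otimes \phi_2) \in {\cal H}^- \otimes {\cal H}^-$ (Proposition \ref{L'-prop}) against $F$, and the orthogonality relations (\ref{orthogonality}) show that this pairing sees only the $k=-1$ component of $F$, i.e.\ its image in $\bigl( \Zh/(I_2^- \oplus \Zh^+) \bigr) \otimes \bigl( \Zh/(I_2^- \oplus \Zh^+) \bigr)$. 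The only difference is that you phrase the argument as a contrapositive.
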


\begin{proof}
Suppose that $\bar L^{(n)}(F)$ is not zero and lies in
${\cal H}^+ \otimes {\cal H}^+$.
Since the pairing (\ref{H-pairing2}) is non-degenerate, there exist
$\phi_1, \phi_2 \in {\cal H}^-$ such that
$$
\iint_{\genfrac{}{}{0pt}{}{W_1 \in U(2)_{R'_1}}{W_2 \in U(2)_{R'_2}}}
\bar L^{(n)}(F)(W_1,W_2)
\cdot (\degt_{W_1} \phi_1)(W_1) \cdot (\degt_{W_2} \phi_2)(W_2)
\, \frac{dV_{W_1}}{N(W_1)} \frac{dV_{W_2}}{N(W_2)} \ne 0.
$$
By (\ref{L-dot-prime-relation}),
\begin{equation}  \label{int-not-zero}
\iint_{\genfrac{}{}{0pt}{}{Z_1 \in U(2)_{R_1}}{Z_2 \in U(2)_{R_2}}}
\acute{L}^{(n)} (\phi_1 \otimes \phi_2)(Z_1,Z_2) \cdot F(Z_1,Z_2)
\, dV_{Z_1} dV_{Z_2} \ne 0.
\end{equation}
On the other hand, by Proposition \ref{L'-prop}, 
$\acute{L}^{(n)} (\phi_1 \otimes \phi_2)(Z_1,Z_2)
\in {\cal H}^- \otimes {\cal H}^-$,
and, by the orthogonality relations (\ref{orthogonality}),
the expression (\ref{int-not-zero}) is zero
unless the projection of $F$ onto
$\bigl( \Zh/(I_2^- \oplus \Zh^+) \bigr) \otimes
\bigl( \Zh/(I_2^- \oplus \Zh^+) \bigr)$ does not vanish.
\end{proof}

\begin{lem}  \label{Zh^+-annihilated-lem}
Suppose that $l^{(n)}(Z_1,Z_2;W_1,W_2)$ is harmonic with respect to $W_1$ or
$W_2$, then the operator $\bar L^{(n)}$ annihilates $\Zh^+ \otimes \Zh$ and
$\Zh \otimes \Zh^+$.
\end{lem}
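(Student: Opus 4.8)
My plan is to deduce the statement from Corollary \ref{harm-harm-cor}. By relabelling the diagram I may assume $l^{(n)}$ is harmonic in one fixed $W$-variable, and since the two tensor factors play parallel roles it is enough to treat $\Zh^+\otimes\Zh$; so fix $F=f_1\otimes f_2$ with $f_1\in\Zh^+$. Because $f_1\in\Zh^+\subseteq I_2^-\oplus\Zh^+$, the projection of $F$ onto $\bigl(\Zh/(I_2^-\oplus\Zh^+)\bigr)\otimes\bigl(\Zh/(I_2^-\oplus\Zh^+)\bigr)$ vanishes. Hence, by the contrapositive of Corollary \ref{harm-harm-cor}, it suffices to show that $\bar L^{(n)}(F)\in{\cal H}^+\otimes{\cal H}^+$: the corollary then forces $\bar L^{(n)}(F)=0$.

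By Proposition \ref{polynomial-prop} we already have $\bar L^{(n)}(F)\in\Zh^+\otimes\Zh^+$, and since differentiation passes under the contour integral (\ref{L-bar}), harmonicity of $l^{(n)}$ in the chosen $W$-variable shows that $\bar L^{(n)}(F)$ is harmonic in that variable. Thus the only point left is harmonicity in the other $W$-variable. For this I would first record the elementary half of the argument: pairing $\bar L^{(n)}(F)$ against $\degt\phi_1\otimes\degt\phi_2$ with $\phi_1,\phi_2\in{\cal H}^-$ and using (\ref{L-dot-prime-relation}) rewrites the result as $\langle\acute L^{(n)}(\phi_1\otimes\phi_2),F\rangle$; by Proposition \ref{L'-prop} the relevant factor lies in ${\cal H}^-$, which is orthogonal to $\Zh^+\ni f_1$ under (\ref{pairing}) because the indices $k$ never match in (\ref{orthogonality}). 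Hence all these pairings vanish, and by the orthogonality relations they detect exactly the ${\cal H}^+\otimes{\cal H}^+$-component; so the harmonic part of the remaining $W$-dependence is already forced to be zero.

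The main obstacle is to remove the non-harmonic (higher $N$-power) part of that remaining $W$-dependence, which the pairing against ${\cal H}^-$ cannot see. The cleanest mechanism I would exploit is the following direct computation: if a vertex $Y\in\{Z_1,Z_2\}$ carries a single solid edge and no dashed edge -- equivalently, $l^{(n)}$ is harmonic in $Y$ -- then performing the $Y$-integration first turns the corresponding factor of $\bar L^{(n)}(F)$ into $\langle f_i, 1/N(\,\cdot\,-Y')\rangle$, which vanishes on $\Zh^+$ by the kernel description in Proposition \ref{quotient-prop}. This disposes of every case in which the $Z$-vertex to be killed is itself harmonic. For the remaining cases I would argue by induction on the number of loops $n$, using the slingshot recursion from the proof of Proposition \ref{polynomial-prop}: the base case $n=1$ is immediate from the explicit computation there, where the one-loop operator is nonzero only when $f_1$ has matrix-coefficient degree exactly $-1$ and hence already annihilates $\Zh^+\otimes\Zh$; in the inductive step one expresses $\bar L^{(n)}$ through $\bar L^{(n-1)}$ of the reduced $(n-1)$-loop diagram, discards the contributions of $Z$-degree $<-2$ by Lemma \ref{I^--annihilated-lem}, and transports the harmonicity hypothesis across the slingshot. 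I expect the genuinely delicate step to be exactly this bookkeeping -- checking, for each of the four ways the last slingshot can be attached, that the reduced diagram still satisfies the harmonicity hypothesis and that the induced inputs still lie in $\Zh^+\otimes\Zh$ -- so that the induction closes and $\bar L^{(n)}(F)$ is pushed into ${\cal H}^+\otimes{\cal H}^+$.
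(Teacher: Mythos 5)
Your overall framing matches the paper's: reduce to showing $\bar L^{(n)}(F)\in{\cal H}^+\otimes{\cal H}^+$, note that $F\in\Zh^+\otimes\Zh$ projects to zero in $\bigl(\Zh/(I_2^-\oplus\Zh^+)\bigr)\otimes\bigl(\Zh/(I_2^-\oplus\Zh^+)\bigr)$, and invoke Corollary \ref{harm-harm-cor} to force $\bar L^{(n)}(F)=0$. Your observations that harmonicity in the hypothesized $W$-variable passes under the integral sign, and that pairing against ${\cal H}^-\otimes{\cal H}^-$ via (\ref{L-dot-prime-relation}) kills the harmonic component, are both correct (the latter is essentially a rerun of the proof of Corollary \ref{harm-harm-cor}). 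But the point you correctly identify as ``the main obstacle'' --- harmonicity of $\bar L^{(n)}(F)$ in the \emph{other} $W$-variable, equivalently the vanishing of the non-harmonic part --- is exactly the point you do not establish. Your direct computation handles only the case where the relevant $Z$-vertex carries a single solid edge (there the $Z_1$-integral expands purely in terms $N(Z_1)^{-1}t^l_{m\,\underline{n}}(Z_1^{-1})$, which are orthogonal to all of $\Zh^+$; with extra solid/dashed edges the expansion contains terms like $N(Z_1)^{-2}$ that pair nontrivially with $\Zh^+$, which is why Lemma \ref{I^--annihilated-lem} only tests against $N(Z_1)^{-2}$). Your fallback, an induction on the number of loops through the slingshot recursion, is left as a plan with the ``genuinely delicate bookkeeping'' explicitly deferred, and it faces real obstructions: the recursion in the proof of Proposition \ref{polynomial-prop} feeds $\bar L^{(n-1)}$ inputs of the form $g_1(Z_1)N(Z_1)^{-1}t^l_{n\,\underline{m}}(Z_1)t^{l'}_{m'\,\underline{n'}}(Z_1^{-1})\cdots$, which leave $\Zh^+\otimes\Zh$ even when $f_1\in\Zh^+$, and the hypothesis that $l^{(n)}$ is harmonic in $W_1$ or $W_2$ does not obviously descend to the reduced diagram $d^{(n-1)}$.

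The paper closes this gap by a different induction: not on the number of loops, but on the total degree of a homogeneous $F\in\Zh^+\otimes\Zh$ (bounded below by Lemma \ref{I^--annihilated-lem}). Taking a minimal-degree homogeneous $F$ with $\bar L^{(n)}(F)\ne0$, the $\mathfrak{gl}(2,\HC)$-equivariance identity
\begin{equation*}
\bar L^{(n)}\bigl((\partial_{ij})_{Z_1}F\bigr)+\bar L^{(n)}\bigl((\partial_{ij})_{Z_2}F\bigr)
=(\partial_{ij})_{W_1}\bar L^{(n)}(F)+(\partial_{ij})_{W_2}\bar L^{(n)}(F)
\end{equation*}
has vanishing left-hand side by minimality, so $\bar L^{(n)}(F)$ is a function of $W_1-W_2$; harmonicity in the one hypothesized variable then gives harmonicity in the other for free, so $\bar L^{(n)}(F)\in{\cal H}^+\otimes{\cal H}^+$ and Corollary \ref{harm-harm-cor} yields the contradiction. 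This translation-equivariance step is the missing idea in your proposal; without it (or a completed version of your loop-induction), the argument does not go through.
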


\begin{proof}
  We will prove that $\bar L^{(n)}$ annihilates $\Zh^+ \otimes \Zh$,
  the other case is similar.
  Without loss of generality we can assume that $F \in \Zh^+ \otimes \Zh$
  is homogeneous.
  (We can even assume that $F$ is of the form $f_1 \otimes f_2$ for some
  $f_1 \in \Zh^+$, $f_2 \in \Zh$.)
  The proof is by induction on the total degree of $F \in \Zh^+ \otimes \Zh$.
  By Lemma \ref{I^--annihilated-lem}, $\bar L^{(n)}$ annihilates
  $\Zh^+ \otimes I_2^-$.
  Thus, if $\bar L^{(n)}(F) \ne 0$, the total degree of $F$ is at least $-2$.
  Since $\bar L^{(n)}$ is $\mathfrak{gl}(2,\HC)$-equivariant,
  \begin{equation}  \label{equiv-eqn}
  \bar L^{(n)} \bigl( (\partial_{ij})_{Z_1} F \bigr)
  + \bar L^{(n)} \bigl( (\partial_{ij})_{Z_2} F \bigr) =
  (\partial_{ij})_{W_1} \bar L^{(n)}(F) + (\partial_{ij})_{W_2} \bar L^{(n)}(F),
  \qquad 1 \le i,j \le 2.
  \end{equation}
  Now, suppose that $F$ is homogeneous, $\bar L^{(n)}(F) \ne 0$ and
  that $F$ has the lowest total degree among such functions. Then
  $(\partial_{ij})_{Z_1} F$ as well as $(\partial_{ij})_{Z_2} F$
  have strictly lower degrees, hence 
  $$
  \bar L^{(n)} \bigl( (\partial_{ij})_{Z_1} F \bigr) =
  \bar L^{(n)} \bigl( (\partial_{ij})_{Z_2} F \bigr) = 0,
  \qquad 1 \le i,j \le 2,
  $$
  and the right hand side of (\ref{equiv-eqn}) is zero as well.
  Therefore, $\bar L^{(n)}(F)$ is a function of $W_1-W_2$.
  Since $\bar L^{(n)}(F)(W_1,W_2)$ is harmonic with respect to $W_1$ or $W_2$,
  it is also harmonic with respect to the other variable, i.e.
  $\bar L^{(n)}(F) \in {\cal H}^+ \otimes {\cal H}^+$,
  and we get a contradiction with Corollary \ref{harm-harm-cor}.
\end{proof}

\begin{cor}  \label{harmonic-input+output-cor}
Suppose that $l^{(n)}(Z_1,Z_2;W_1,W_2)$ is harmonic with respect to $W_1$ or
$W_2$, then the operator
$\bar L^{(n)}: \Zh \otimes \Zh \to {\cal H}^+ \otimes \Zh^+$
descends to the quotient
$$
\bigl( \Zh/(I_2^- \oplus \Zh^+) \bigr) \otimes
\bigl( \Zh/(I_2^- \oplus \Zh^+) \bigr),
$$
and its image lies in ${\cal H}^+ \otimes {\cal H}^+$.
\end{cor}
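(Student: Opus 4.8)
The plan is to treat the two assertions in turn: first that $\bar L^{(n)}$ factors through the stated quotient, and then that the resulting map has image in ${\cal H}^+ \otimes {\cal H}^+$. The descent is almost immediate once the kernel of the projection is identified. Writing $B = I_2^- \oplus \Zh^+ \subset \Zh$, the kernel of the natural surjection $\Zh \otimes \Zh \to (\Zh/B) \otimes (\Zh/B)$ is $B \otimes \Zh + \Zh \otimes B$, which is spanned by the four subspaces $I_2^- \otimes \Zh$, $\Zh^+ \otimes \Zh$, $\Zh \otimes I_2^-$ and $\Zh \otimes \Zh^+$. So it suffices to check that $\bar L^{(n)}$ annihilates each of these.

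The subspaces $I_2^- \otimes \Zh$ and $\Zh \otimes I_2^-$ are killed by Lemma \ref{I^--annihilated-lem}, which holds for every box diagram with no hypothesis on harmonicity. The subspaces $\Zh^+ \otimes \Zh$ and $\Zh \otimes \Zh^+$ are exactly the content of Lemma \ref{Zh^+-annihilated-lem}, and this is precisely where the assumption that $l^{(n)}$ is harmonic in $W_1$ or $W_2$ is used. Combining the two lemmas, $\bar L^{(n)}$ vanishes on the whole kernel and hence factors through $\bigl(\Zh/(I_2^- \oplus \Zh^+)\bigr) \otimes \bigl(\Zh/(I_2^- \oplus \Zh^+)\bigr)$.

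For the image, I would first record the partial information already available: by Proposition \ref{polynomial-prop} the image lies in $\Zh^+ \otimes \Zh^+$, and, assuming without loss of generality that $l^{(n)}$ is harmonic in $W_1$, differentiating under the integral sign in the defining formula (legitimate since the cycles $U(2)_{R_i}$ are compact and the integrand is holomorphic in $W_1$) shows $\square_{W_1} \bar L^{(n)}(f_1 \otimes f_2) = 0$. This already places the image in ${\cal H}^+ \otimes \Zh^+$, but harmonicity in the remaining variable is not yet visible. To obtain it I would pass through the relation (\ref{L-L-bar-rel}) together with the isomorphism (\ref{harm-iso}): since $\phi \mapsto \degt\phi/N$ is an isomorphism from ${\cal H}^+$ onto $\Zh/(I_2^- \oplus \Zh^+)$ (Proposition \ref{quotient-prop}), every class in the quotient is represented by some $\degt\phi_1/N \otimes \degt\phi_2/N$, and on such a representative the descended operator coincides with $L^{(n)}(\phi_1 \otimes \phi_2)$. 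By Proposition \ref{Prop12} this value lies in ${\cal H}^+ \otimes {\cal H}^+$, so the descended operator, and hence $\bar L^{(n)}$ itself, which factors through it, has image in ${\cal H}^+ \otimes {\cal H}^+$.

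I expect the descent to be routine once the two annihilation lemmas are invoked; the real content is the harmonicity in the second variable. The integral representation only yields harmonicity in the one variable where $l^{(n)}$ is assumed harmonic, and the symmetric statement cannot be read off from the kernel alone. The decisive step is therefore the identification of the descended operator with $L^{(n)}$ via (\ref{L-L-bar-rel}), which transfers the two-sided harmonicity already established in Proposition \ref{Prop12} back to the present setting.
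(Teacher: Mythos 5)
Your argument is correct and follows essentially the same route as the paper: the descent is obtained by observing that the kernel of the projection onto the quotient is spanned by $I_2^-\otimes\Zh$, $\Zh\otimes I_2^-$, $\Zh^+\otimes\Zh$, $\Zh\otimes\Zh^+$ and invoking Lemmas \ref{I^--annihilated-lem} and \ref{Zh^+-annihilated-lem}, while the image statement is reduced via the isomorphism (\ref{harm-iso}) of Proposition \ref{quotient-prop} and the relation (\ref{L-L-bar-rel}) to Proposition \ref{Prop12}. The only cosmetic difference is your intermediate remark about differentiating under the integral sign, which the paper omits because the identification with $L^{(n)}$ already yields harmonicity in both $W$-variables at once.
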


\begin{proof}
The part that $\bar L^{(n)}$ descends to
$\bigl( \Zh/(I_2^- \oplus \Zh^+) \bigr) \otimes
\bigl( \Zh/(I_2^- \oplus \Zh^+) \bigr)$
follows immediately from  Lemmas \ref{I^--annihilated-lem} and
\ref{Zh^+-annihilated-lem}.

It remains to show that the image of $\bar L^{(n)}$ lies in
${\cal H}^+ \otimes {\cal H}^+$. By Proposition \ref{quotient-prop},
$$
\bigl(\varpi_2^*, \Zh/(I_2^- \oplus \Zh^+)\bigr) \simeq (\pi^0_*, {\cal H}^+),
$$
where $*$ stands for $l$ or $r$, with the isomorphism map (\ref{harm-iso}).
By (\ref{L-L-bar-rel}), the images of $\bar L^{(n)}$ and $L^{(n)}$ are the same.
Then the result follows from Proposition \ref{Prop12}.
\end{proof}

\begin{cor}  \label{harmonic-cor}
Suppose that $l^{(n)}(Z_1,Z_2;W_1,W_2)$ is harmonic with respect to $W_1$ or
$W_2$, then $l^{(n)}(Z_1,Z_2;W_1,W_2)$ is harmonic in all the variables
$$
Z_1 \in \BB D^-_{r_{\text{max},1}}, \quad Z_2 \in \BB D^-_{r_{\text{max},2}}, \quad
W_1 \in \BB D^+_{r_{\text{min},1}}, \quad W_2 \in \BB D^+_{r_{\text{min},2}}.
$$
\end{cor}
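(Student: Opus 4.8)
The plan is to read off the harmonicity of $l^{(n)}$ in each of the four variables from the structural description of $\bar L^{(n)}$ provided by Corollary \ref{harmonic-input+output-cor}, using the completeness of the matrix coefficients on $U(2)_R$ to pass between statements about the kernel $l^{(n)}$ and statements about the operator $\bar L^{(n)}$.

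First I would treat the output variables $W_1, W_2$. By Corollary \ref{harmonic-input+output-cor}, for every $f_1, f_2 \in \Zh$ the function $\bar L^{(n)}(f_1 \otimes f_2)(W_1,W_2)$ lies in ${\cal H}^+ \otimes {\cal H}^+$, hence is annihilated by $\square_{W_1}$ and by $\square_{W_2}$. Since the $W$-Laplacians may be moved under the integral sign in (\ref{L-bar}), this yields
$$
\iint_{\genfrac{}{}{0pt}{}{Z_1 \in U(2)_{R_1}}{Z_2 \in U(2)_{R_2}}}
(\square_{W_i} l^{(n)})(Z_1,Z_2;W_1,W_2) \cdot f_1(Z_1) \cdot f_2(Z_2) \, dV_1 dV_2 = 0
$$
for all $f_1, f_2 \in \Zh$ and $i=1,2$. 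For fixed $W_1, W_2$, the function $\square_{W_i} l^{(n)}$ is thus orthogonal, in each of $Z_1$ and $Z_2$ separately, to the complete system of matrix coefficients spanning $\Zh$; by the completeness statement following (\ref{orthogonality}) it vanishes on $U(2)_{R_1} \times U(2)_{R_2}$, and hence identically by holomorphy. This gives harmonicity in $W_1$ and $W_2$.

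For the input variables $Z_1, Z_2$ I would exploit that Corollary \ref{harmonic-input+output-cor} makes $\bar L^{(n)}$ descend through $\Zh/(I_2^- \oplus \Zh^+)$ in each slot, i.e.\ (via Lemmas \ref{I^--annihilated-lem} and \ref{Zh^+-annihilated-lem}) that $\bar L^{(n)}$ annihilates $\Zh^+$ and $I_2^-$ in the $Z_1$-slot. Running the same completeness argument in the $Z_2$-variable, this translates into the statement that, for fixed $Z_2, W_1, W_2$, the function $l^{(n)}(\cdot, Z_2; W_1, W_2)$ is orthogonal under the pairing (\ref{pairing}) to every element of $\Zh^+ + I_2^-$, that is, to all $N(Z)^k t^l_{n\,\underline{m}}(Z)$ with $k \ge 0$ or $k \le -2$. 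The orthogonality relations (\ref{orthogonality}) identify the system dual to $\{N(Z)^k t^l_{n\,\underline{m}}(Z)\}$ as $\{N(Z)^{-k-2} t^l_{m\,\underline{n}}(Z^{-1})\}$; writing $l^{(n)}(\cdot, Z_2; \dots)$ in this dual basis, orthogonality to all primal vectors with $k \ge 0$ and $k \le -2$ forces the corresponding coefficients to vanish, leaving only the dual vectors $\{N(Z)^{-1} t^l_{m\,\underline{n}}(Z^{-1})\}$, whose span is exactly ${\cal H}^-$. Hence $l^{(n)}$ is harmonic in $Z_1$; the symmetric argument, using that $\bar L^{(n)}$ annihilates $\Zh \otimes \Zh^+$ and $\Zh \otimes I_2^-$, gives harmonicity in $Z_2$.

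The step I expect to require the most care is the identification of the orthogonal complement of $\Zh^+ + I_2^-$ with ${\cal H}^-$: the orthogonality relations (\ref{orthogonality}) are stated for $-l \le m \le l$, whereas $\Zh$ and ${\cal H}^-$ involve the extended range $-l-1 \le m \le l+1$, so one must check that the boundary indices $m = \pm(l+1)$ are accounted for and that the expansion of $l^{(n)}(\cdot, Z_2; \dots)$ in the dual system converges on the relevant region $\BB D^-_{r_{\text{max},1}}$. Once this bookkeeping is in place, combining the $Z$- and $W$-harmonicity yields that $l^{(n)}$ is harmonic in all four variables on the stated domains.
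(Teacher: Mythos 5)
Your proposal is correct and follows essentially the same route as the paper: both arguments derive harmonicity in $W_1,W_2$ from the fact that $\bar L^{(n)}$ lands in ${\cal H}^+\otimes{\cal H}^+$, and harmonicity in $Z_1,Z_2$ from the fact that $\bar L^{(n)}$ kills $I_2^-\oplus\Zh^+$ in each slot, using the completeness and orthogonality of the matrix coefficients on $U(2)_R$ together with analytic continuation. The paper merely phrases this as a proof by contradiction, whereas you unwind it directly (and flag the same bookkeeping points the paper leaves implicit).
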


\begin{proof}
Recall that the restrictions of the functions
$N(Z)^k \cdot t^l_{n\,\underline{m}}(Z)$'s to $U(2)$ form a complete orthogonal
basis of functions on $U(2)$.
Also, a complex analytic function defined on a connected open subset
$\Omega$ of $\HC$ is uniquely determined by its restriction to
$\Omega \cap U(2)_R$ whenever this intersection is non-empty and $R>0$.
If $l^{(n)}(Z_1,Z_2;W_1,W_2)$ is not harmonic with respect to $Z_1$ or $Z_2$,
by the orthogonality relations (\ref{orthogonality}) there will be a pair of
functions $f_1, f_2 \in \Zh$ with $f_1$ or $f_2$ lying in $I_2^- \oplus \Zh^+$
such that $\bar L^{(n)}(f_1 \otimes f_2) \ne 0$, which contradicts
Corollary \ref{harmonic-input+output-cor}.
If $l^{(n)}(Z_1,Z_2;W_1,W_2)$ is not harmonic with respect to $W_1$ or $W_2$,
there will be $f_1, f_2 \in \Zh$ such that
$\bar L^{(n)}(f_1 \otimes f_2) \notin {\cal H}^+ \otimes {\cal H}^+$, which,
again, contradicts Corollary \ref{harmonic-input+output-cor}.
\end{proof}

Now, we drop the assumption that $l^{(n)}(Z_1,Z_2;W_1,W_2)$ is harmonic
with respect to $W_1$ or $W_2$.

\begin{prop}   \label{harmonic-input+output}
The operator $\bar L^{(n)}$ annihilates $\Zh^+ \otimes \Zh$ and
$\Zh \otimes \Zh^+$, it descends to the quotient
$$
\bigl( \Zh/(I_2^- \oplus \Zh^+) \bigr) \otimes
\bigl( \Zh/(I_2^- \oplus \Zh^+) \bigr),
$$
and its image lies in ${\cal H}^+ \otimes {\cal H}^+$.
Additionally, the $n$-loop box integral $l^{(n)}(Z_1,Z_2;W_1,W_2)$ is harmonic
in the variables
$$
Z_1 \in \BB D^-_{r_{\text{max},1}}, \quad Z_2 \in \BB D^-_{r_{\text{max},2}}, \quad
W_1 \in \BB D^+_{r_{\text{min},1}}, \quad W_2 \in \BB D^+_{r_{\text{min},2}}.
$$
\end{prop}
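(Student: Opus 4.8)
The plan is to observe that the four assertions of the Proposition are precisely the conclusions of Lemma \ref{Zh^+-annihilated-lem} and Corollaries \ref{harmonic-input+output-cor} and \ref{harmonic-cor}, every one of which was proved under the standing hypothesis that $l^{(n)}(Z_1,Z_2;W_1,W_2)$ be harmonic with respect to $W_1$ or $W_2$. Thus the whole content of the Proposition reduces to removing that hypothesis, i.e.\ to proving \emph{unconditionally} that $l^{(n)}$ is harmonic in at least one of $W_1,W_2$. Once this is in hand, the annihilation of $\Zh^+\otimes\Zh$ and $\Zh\otimes\Zh^+$ follows from Lemma \ref{Zh^+-annihilated-lem}, the descent to $\bigl(\Zh/(I_2^-\oplus\Zh^+)\bigr)\otimes\bigl(\Zh/(I_2^-\oplus\Zh^+)\bigr)$ together with the image lying in ${\cal H}^+\otimes{\cal H}^+$ follows from Corollary \ref{harmonic-input+output-cor}, and the harmonicity of $l^{(n)}$ in all four variables follows from Corollary \ref{harmonic-cor}.

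To establish the unconditional $W$-harmonicity I would induct on the number of loops $n$. For $n=1$ the integral (\ref{l^{(1)}}) has each external variable entering the integrand through a single factor $N(\,\cdot\,-T)^{-1}$; since $N(X-T)^{-1}=|X-T|^{-2}$ is the fundamental solution of the four-dimensional Laplacian, hence harmonic in $X$ away from $X=T$, and since the contour $T\in U(2)_r$ is disjoint from the region $W_i\in\BB D^+_r$, one may differentiate under the integral sign to conclude $\square_{W_i}l^{(1)}=0$ for $i=1,2$ (indeed for all four variables). For the inductive step I would use the slingshot description: $d^{(n)}$ arises from $d^{(n-1)}$ by relabelling one of $Z_1,Z_2,W_1,W_2$ as a new solid vertex $T_n$ and attaching the handle tip as a new external vertex. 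Two features are decisive. First, from the construction the handle tip is joined to the rest of the diagram only by the handle -- a single solid edge, consistently with (\ref{1edge}) -- so the new variable $Y_{\mathrm{new}}$ enters the integrand only through $N(Y_{\mathrm{new}}-T_n)^{-1}$ and $l^{(n)}$ is harmonic in $Y_{\mathrm{new}}$ directly. Second, reading off the new relations case by case shows that exactly one old external variable acquires no new relation and is therefore ``untouched'': it is $W_1$ when $Z_1$ becomes $T_n$, $W_2$ when $Z_2$ becomes $T_n$, $Z_1$ when $W_1$ becomes $T_n$, and $Z_2$ when $W_2$ becomes $T_n$. An untouched variable occurs in the integrand only inside the factor $l^{(n-1)}$, so harmonicity in it is inherited from the induction hypothesis by differentiating under the $T_n$-integral.

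Combining the two features yields harmonicity in a $W$-variable in every case. When $Z_1$ (resp.\ $Z_2$) is promoted to $T_n$, the untouched variable is $W_1$ (resp.\ $W_2$), so $l^{(n)}$ is harmonic there by the induction hypothesis, exactly as in the explicit reduction to $l^{(n-1)}(T_n,Z_2;W_1,W_2)$ carried out in the proof of Proposition \ref{polynomial-prop}; when $W_1$ (resp.\ $W_2$) is promoted, the \emph{new} variable is $W_1$ (resp.\ $W_2$) and $l^{(n)}$ is harmonic there directly. Thus in all four cases $l^{(n)}$ is harmonic with respect to $W_1$ or $W_2$, which is precisely the hypothesis needed to invoke Corollary \ref{harmonic-cor} for this value of $n$; this closes the induction.

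The step I expect to be the main obstacle is the bookkeeping in the inductive step: one must write out the slingshot integral in each of the four attachment cases and verify rigorously that the chosen $W$-variable appears in only one place -- either solely inside $l^{(n-1)}$ (untouched) or solely in a single factor $N(\,\cdot\,-T_n)^{-1}$ (new) -- so that $\square_{W_i}$ may be passed through the $T_n$-integration without meeting the singularity at $W_i=T_n$. This separation is guaranteed by the radius conditions (\ref{r_max})--(\ref{r_min}), which keep $W_i\in\BB D^+_{r_{\text{min},i}}$ away from the contour $U(2)_{r_n}$, but it must be stated carefully, since this is exactly the point at which the relative position of the cycles of integration enters the argument.
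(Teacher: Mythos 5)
Your proposal is correct, and its skeleton coincides with the paper's: both reduce the Proposition to showing unconditionally that $l^{(n)}$ is harmonic in $W_1$ or $W_2$, handle $n=1$ and the cases where the last slingshot promotes $W_1$ or $W_2$ identically (the new handle--tip variable enters only through a single harmonic factor $N(\,\cdot\,-T_n)^{-1}$), and then invoke Lemma \ref{Zh^+-annihilated-lem} and Corollaries \ref{harmonic-input+output-cor}, \ref{harmonic-cor}. The divergence is in the two remaining cases, where the last slingshot promotes $Z_1$ or $Z_2$. The paper does \emph{not} induct on $n$ here: it applies the inversion $Z\mapsto Z^{-1}$, uses relation (\ref{l-inversion}) to pass to the integral $\tilde l^{(n)}$ of the order-reversed diagram $\tilde d^{(n)}$ -- which is harmonic in a $W$-variable directly -- applies Corollary \ref{harmonic-cor} to $\tilde l^{(n)}$, and transfers harmonicity back. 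You instead run a genuine induction on $n$, using the full harmonicity of $l^{(n-1)}$ to deduce harmonicity of $l^{(n)}$ in the ``untouched'' variable ($W_1$ when $Z_1$ is promoted, $W_2$ when $Z_2$ is promoted) by differentiating under the $T_n$-integral. Your case bookkeeping is accurate and the radius condition you need ($r_n>r'_{\max,1}$, so that $U(2)_{r_n}\subset\BB D^-_{r'_{\max,1}}$ and the induction hypothesis applies on the contour) does follow from the carried-over relations $T_k\prec T_n$. Your route buys elementarity -- it avoids (\ref{l-inversion}), the reversed-diagram formalism and the machinery around Proposition \ref{L'-prop} -- at the cost of an induction the paper does not need. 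One point to tighten: justifying differentiation under the integral requires not merely that the contour $U(2)_r$ and the region $\BB D^+_r$ are disjoint as sets, but that $N(W_i-T)\ne 0$ for $W_i\in\BB D^+_r$, $T\in U(2)_r$ (in the complexified setting $N(W-T)=0$ does not force $W=T$); this holds because $W_iT^{-1}\in\BB D^+$ there, which is exactly the convergence condition for the expansion (\ref{1/N-expansion}), so the gap is easily closed.
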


\begin{proof}
If $n=1$, the one-loop box integral $l^{(1)}(Z_1,Z_2;W_1,W_2)$ is given by
(\ref{l^{(1)}}) and harmonic with respect to each variable, thus the
result follows from Lemma \ref{Zh^+-annihilated-lem} and its
Corollaries \ref{harmonic-input+output-cor}, \ref{harmonic-cor}.

Assume that $n \ge 2$, then the $n$-loop box diagram $d^{(n)}$ is obtained by
attaching the last slingshot to an $(n-1)$-loop box diagram $d^{(n-1)}$.
Let us consider the possibilities. If the last slingshot is attached so that 
the vertex $W_1$ in $d^{(n-1)}$ becomes a solid vertex and gets relabeled as
$T_n$, then $l^{(n)}(Z_1,Z_2;W_1,W_2)$ is automatically harmonic with respect
to $W_1$, and the result is true by Lemma \ref{Zh^+-annihilated-lem} and its
Corollaries \ref{harmonic-input+output-cor}, \ref{harmonic-cor}.
Similarly, if the slingshot is attached so that the vertex $W_2$ in $d^{(n-1)}$
becomes a solid vertex, then $l^{(n)}(Z_1,Z_2;W_1,W_2)$ is harmonic with respect
to $W_2$, and we are done.

Finally, suppose that the last slingshot is attached so that either $Z_1$
or $Z_2$ in $d^{(n-1)}$ becomes a solid vertex, then
$l^{(n)}(Z_1,Z_2;W_1,W_2)$ is harmonic with respect to either $Z_1$ or $Z_2$.
We apply
$\bigl(\begin{smallmatrix} 0 & 1 \\ 1 & 0 \end{smallmatrix}\bigr) \in GL(2,\HC)$
and proceed as in the proof of Proposition \ref{L'-prop}.
Recall that the map $Z \mapsto Z^{-1}$ reverses the partial ordering
of vertices in the diagram $d^{(n)}$.
Let $\tilde l^{(n)}(Z_1,Z_2;W_1,W_2)$ be the conformal four-point integral
corresponding to the box diagram $\tilde d^{(n)}$ obtained from
$d^{(n)}$ by reversing the partial ordering.
Then $\tilde l^{(n)}(Z_1,Z_2;W_1,W_2)$ is harmonic with respect to either
$W_1$ or $W_2$ and Corollary \ref{harmonic-cor} is applicable.
Thus, $\tilde l^{(n)}(Z_1,Z_2;W_1,W_2)$ is harmonic with respect to each
variable. By the relation (\ref{l-inversion}), $l^{(n)}(Z_1,Z_2;W_1,W_2)$
is harmonic with respect to $W_1$ and $W_2$, and the result follows.
\end{proof}

Now we can give a proof of Theorem \ref{magic}
(magic identities in the Minkowski metric case).

\begin{proof}
Recall that the restrictions of the functions
$N(Z)^k \cdot t^l_{n\,\underline{m}}(Z)$'s to $U(2)$ form a complete orthogonal
basis of functions on $U(2)$.
Also, a complex analytic function defined on a connected open subset
$\Omega$ of $\HC$ is uniquely determined by its restriction to
$\Omega \cap U(2)_R$ whenever this intersection is non-empty and $R>0$.
Let $l^{(n)}(Z_1,Z_2;W_1,W_2)$ and $\tilde l^{(n)}(Z_1,Z_2;W_1,W_2)$ be two
conformal four-point integrals corresponding to any two $n$-loop box diagrams.
If $l^{(n)}(Z_1,Z_2;W_1,W_2) \ne \tilde l^{(n)}(Z_1,Z_2;W_1,W_2)$, there exist
$f_1 = N(Z)^k \cdot t^l_{n\,\underline{m}}(Z)$ and
$f_2 = N(Z)^{k'} \cdot t^{l'}_{n'\,\underline{m'}}(Z)$ such that
\begin{multline}  \label{not_equal}
\Bigl(\frac{i}{2\pi^3}\Bigr)^2
\iint_{\genfrac{}{}{0pt}{}{Z_1 \in U(2)_{R_1}}{Z_2 \in U(2)_{R_2}}}
l^{(n)}(Z_1,Z_2;W_1,W_2) \cdot f_1(Z_1) \cdot f_2(Z_2) \,dV_1 dV_2  \\
\ne
\Bigl(\frac{i}{2\pi^3}\Bigr)^2
\iint_{\genfrac{}{}{0pt}{}{Z_1 \in U(2)_{R_1}}{Z_2 \in U(2)_{R_2}}}
\tilde l^{(n)}(Z_1,Z_2;W_1,W_2) \cdot f_1(Z_1) \cdot f_2(Z_2) \,dV_1 dV_2.
\end{multline}
In other words, the operators $\bar L^{(n)}$ associated to $l^{(n)}$ and
$\tilde l^{(n)}$ take different values on $f_1 \otimes f_2$.
By Proposition \ref{harmonic-input+output}, $l^{(n)}$ and $\tilde l^{(n)}$
are harmonic in $Z_1 \in \BB D^-_{r_{\text{max},1}}$ and
$Z_2 \in \BB D^-_{r_{\text{max},2}}$, hence $k=k'=-1$
(otherwise both sides of (\ref{not_equal}) are zero).
Then by Theorem \ref{operator-magic} and relation (\ref{L-L-bar-rel}),
both sides of (\ref{not_equal}) are equal, which is a contradiction.
\end{proof}

As an immediate consequence of Theorem \ref{magic}, we also have
a stronger operator version of magic identities
(compare with Theorem \ref{operator-magic}):

\begin{cor}
The integral operators
$\bar L^{(n)} : (\varpi_2^l, \Zh) \otimes (\varpi_2^r, \Zh) \to
(\pi^0_l, {\cal H}^+) \otimes (\pi^0_r, {\cal H}^+)$ associated to any
two $n$-loop box diagrams are the same.
The operators $\bar L^{(n)}$ descend to the quotient
$$
\bigl( \varpi_2^l, \Zh/(I_2^- \oplus \Zh^+) \bigr) \otimes
\bigl( \varpi_2^r, \Zh/(I_2^- \oplus \Zh^+) \bigr)
\simeq
(\pi^0_l, {\cal H}^+) \otimes (\pi^0_r, {\cal H}^+),
$$
and the action of $\bar L^{(n)}$ on each irreducible component of
$(\pi^0_l, {\cal H}^+) \otimes (\pi^0_r, {\cal H}^+)$ is described in
Theorem 14 in \cite{L2}.
\end{cor}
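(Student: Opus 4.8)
The plan is to assemble this corollary from Theorem \ref{magic} together with the structural results already established for $\bar L^{(n)}$, treating its three assertions in turn.

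For the equality of the operators I would argue straight from the definition (\ref{L-bar}). Let $l^{(n)}$ and $\tilde l^{(n)}$ be the integrals attached to two $n$-loop box diagrams; by Theorem \ref{magic} they coincide as functions on the relevant product of domains. Since $\bar L^{(n)}(f_1 \otimes f_2)(W_1,W_2)$ is obtained by pairing $l^{(n)}$ against $f_1 \otimes f_2$ through (\ref{L-bar}), substituting $l^{(n)} = \tilde l^{(n)}$ shows that the two operators take the same value on every elementary tensor $f_1 \otimes f_2$, hence on all of $\Zh \otimes \Zh$ by linearity. Here one notes that the integrand is a holomorphic form of highest degree, so the value is independent of the radii $R_1, R_2$ as long as they exceed the respective $r_{\text{max},i}$; moreover, by Proposition \ref{polynomial-prop} the outputs are polynomial, so agreement on an open set of $(W_1,W_2)$ already forces agreement as elements of $\Zh^+ \otimes \Zh^+$. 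Thus the comparison is unambiguous even when the two diagrams suggest different cycles.

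For the second assertion I would simply invoke Proposition \ref{harmonic-input+output}, which already shows that $\bar L^{(n)}$ annihilates $\Zh^+ \otimes \Zh$ and $\Zh \otimes \Zh^+$, and therefore descends to $\bigl( \Zh/(I_2^- \oplus \Zh^+) \bigr) \otimes \bigl( \Zh/(I_2^- \oplus \Zh^+) \bigr)$ with image in ${\cal H}^+ \otimes {\cal H}^+$. The identification of this quotient with ${\cal H}^+ \otimes {\cal H}^+$ is then the isomorphism (\ref{harm-iso}) of Proposition \ref{quotient-prop}, applied in each tensor factor.

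For the third assertion I would transport Theorem 14 of \cite{L2} across this identification. By the relation (\ref{L-L-bar-rel}), the map $\phi \mapsto \degt\phi(Z)/N(Z)$ realizing (\ref{harm-iso}) intertwines the descended $\bar L^{(n)}$ with $L^{(n)}$; hence under Proposition \ref{quotient-prop} the two operators coincide, and the scalar by which $L^{(n)}$ acts on each irreducible component of $(\pi^0_l, {\cal H}^+) \otimes (\pi^0_r, {\cal H}^+)$, as recorded in Theorem 14 in \cite{L2}, applies verbatim to $\bar L^{(n)}$. I expect no serious obstacle: the corollary is a bookkeeping consequence of Theorem \ref{magic}, Propositions \ref{quotient-prop} and \ref{harmonic-input+output}, and relation (\ref{L-L-bar-rel}). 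The one point deserving care is to confirm that the descent of $\bar L^{(n)}$ in Proposition \ref{harmonic-input+output} uses \emph{exactly} the isomorphism (\ref{harm-iso}) appearing in (\ref{L-L-bar-rel}), so that Theorem 14's spectral description transfers under the correct identification and not merely an abstract one.
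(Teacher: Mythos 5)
Your proposal is correct and matches the paper's intent: the paper states this corollary without a separate proof, presenting it as an immediate consequence of Theorem \ref{magic} together with Proposition \ref{harmonic-input+output}, Proposition \ref{quotient-prop}, and relation (\ref{L-L-bar-rel}), which is exactly the assembly you carry out. Your added care about independence of the radii and about the descent using the specific isomorphism (\ref{harm-iso}) is consistent with how these identifications are set up in the text.
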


\separate

\noindent
{\em Department of Mathematics, Indiana University,
Rawles Hall, 831 East 3rd St, Bloomington, IN 47405}

\end{document}